\documentclass{amsart}

\usepackage{tikz-cd}

\usepackage{amssymb}
\usepackage{amsthm}
\usepackage{amscd}

\usepackage[mathscr]{eucal}
\usepackage{enumitem}

\usepackage{hyperref}

\usepackage{color}

\usepackage[backend=biber,style=alphabetic,maxnames=99]{biblatex}
\AtEveryBibitem{%
  \clearfield{doi}%
  \clearfield{url}%
  \clearfield{urldate}%
  \ifentrytype{misc}{}{\clearfield{eprint}}%
  \clearfield{archiveprefix}%
  \clearfield{primaryclass}%
  \clearfield{isbn}%
  \clearfield{issn}%
  \clearfield{note}%
  \clearlist{language}%
  \clearlist{location}%
}

\DeclareBibliographyDriver{article}{%
  \usebibmacro{bibindex}%
  \usebibmacro{begentry}%
  \printnames{author}%
  \setunit{\labelnamepunct}\newblock
  \printfield{title}%
  \newunit\newblock
  \printfield{journaltitle}%
  \setunit*{\addspace}%
  \printfield{volume}%
  \iffieldundef{number}
    {}
    {\printtext[parens]{\printfield{number}}}%
  \setunit{\addspace}%
  \printtext[parens]{\printfield{year}}%
  \setunit{\addcolon\space}%
  \printfield{pages}%
  \finentry
}

\newtheorem{Theorem}{Theorem}[section]
\newtheorem{theorem}[Theorem]{Theorem}

\newtheorem{proposition}[Theorem]{Proposition}

\newtheorem{corollary}[Theorem]{Corollary}

\newtheorem{lemma}[Theorem]{Lemma}
\newtheorem{Fact}[Theorem]{Fact}
\newtheorem{fact}[Theorem]{Fact}
\newtheorem{remark/def}[Theorem]{Remark/Definition}

\newtheorem{claim}[Theorem]{Claim}

\theoremstyle{definition}

\newtheorem{remark}[Theorem]{Remark}

\newtheorem{definition}[Theorem]{Definition}

\newtheorem{notation}[Theorem]{Notation}

\newtheorem{question}[Theorem]{Question}
\newtheorem{def/rem}[Theorem]{Definition/Remark}
\newtheorem{def/fact}[Theorem]{Definition/Fact}
\newtheorem{not/rem}[Theorem]{Notation/Remark}

\newtheorem{observation}[Theorem]{Observation}

\newsavebox{\indbin}
\savebox{\indbin}{\begin{picture}(0,0)
\newlength{\gnu}
\settowidth{\gnu}{$\smile$} \setlength{\unitlength}{.5\gnu}
\put(-1,-.65){$\smile$} \put(-.25,.1){$|$}
\end{picture}}

\newcommand{\age}{\operatorname{age}}

\newcommand{\aut}{\operatorname{Aut}}

\newcommand{\emb}{\operatorname{Emb}}
\newcommand{\SQ}{\mathbf{S}_{\mathbb{Q}}}
\newcommand{\SQP}{\mathbf{S}_{\mathbb{Q}}^+}
\newcommand{\CQ}{\mathbf{C}_{\mathbb{Q}}}

\newcommand{\len}{\operatorname{len}}
\newcommand{\fr}{Fraïssé }

\newcommand{\mb}{\mathbf}
\newcommand{\mc}{\mathcal}
\newcommand{\str}{\operatorname{str}}

\newcommand{\bin}{\operatorname{Bin}}

\newcommand{\cone}{\operatorname{Cone}}

\def\dom{\operatorname{dom}}

\def\tless#1{\lhd^{\mathbf#1}}

\def\hull#1#2{\langle#1\rangle_{#2}}

\begin{document}

\title[Ramsey properties of trees and $C$-relations]{Ramsey properties of trees and $C$-relations via the partite construction}

\date{October 13, 2026}

\author[H. Lee]{Hyoyoon Lee}
\address{Center for Nano Materials\\
 G-LAMP\\
 Sogang University\\
 Seoul, South Korea}
 \email{hyoyoonlee@sogang.ac.kr}

\begin{abstract}
Finite convexly ordered binary branching $C$-relations have been viewed as a notable exception to the broad applicability of the partite construction in structural Ramsey theory.
The absence of a proof of their Ramsey property by the partite construction was noted by Bodirsky in his 2015 survey and pointed out again by Hubi{\v{c}}ka and Kone{\v{c}}n{\'y} in their 2026 survey.
We overcome this limitation by giving proofs based on the partite construction of the known Ramsey properties of the class $\mathcal{CT}$ of finite convexly ordered treeable meet semilattices and of the classes of finite convexly ordered $C$-relations with arbitrary branching and with binary branching.
Our proofs do not rely on Milliken's tree theorem.
The key is to expand $\mathcal{CT}$ by level predicates and a predecessor function so that the locally finite subclass method applies.
We then transfer the resulting Ramsey witnesses to the corresponding reduct classes and to the classes of convexly ordered $C$-relations.
\end{abstract}

\subjclass[2020]{Primary 05D10; Secondary 05C55, 03C15, 06A12.}

\keywords{Structural Ramsey theory, treeable meet semilattices, $C$-relations, partite construction, locally finite subclasses, Fra\"{\i}ss\'e classes}

\maketitle

\tableofcontents

\section{Introduction}\label{sec:intro}

Structural Ramsey theory aims to generalize the classical Ramsey theorem for the class of finite linearly ordered sets \cite{Ram30} to general structures.
In particular, we will be interested in the Ramsey property for embeddings, rather than the Ramsey property for copies of structures.
More precisely, let $\mb A, \mb B, \mb C$ be any structures in a language $\mc L$, and let $r \ge 2$ be an integer.
$\emb(\mb A, \mb B)$ is the set of all embeddings from $\mb A$ into $\mb B$.
The notation 
$$\mb C \to (\mb B)^{\mb A}_r$$
means that for every $r$-coloring $\chi: \emb(\mb A, \mb C) \to r$, there is $\beta \in \emb(\mb B, \mb C)$ such that $\left|\chi(\beta \circ \emb(\mb A, \mb B))\right| \le 1$, where $\beta \circ \emb(\mb A, \mb B) = \{\beta \circ \alpha: \alpha \in \emb(\mb A, \mb B)\}$.
A class $\mc K$ of structures is called a {\em Ramsey class}, or has the {\em Ramsey property} if for any $\mb A, \mb B \in \mc K$ and any integer $r \ge 2$, there is $\mb C \in \mc K$ such that $\mb C \to (\mb B)^{\mb A}_r$.
We call this $\mb C$ a {\em Ramsey witness for $\mb A, \mb B \in \mc K$ (and $r$)}.

A foundational theorem of structural Ramsey theory was obtained by Ne\v{s}et\v{r}il and R\"odl \cite{NR77} and, independently, by Abramson and Harrington \cite{AH78}.
This theorem is often called the (unrestricted) Ne\v{s}et\v{r}il--R\"odl theorem and says that the class of all finite linearly ordered structures in a finite relational language has the Ramsey property.
The connection between Ramsey classes, \fr classes, and extreme amenability led to Ne\v{s}et\v{r}il's classification program of Ramsey classes \cite{Nes89, KPT2005, Nes05}.
See, for example, \cite{HK26} for an excellent survey.

One of the most powerful methods to obtain a Ramsey witness is the partite construction.
The partite construction was developed in the 70s and 80s, mainly by Ne\v{s}et\v{r}il and R\"odl.
It has many variations and generalizations, and it can be applied to find a vast range of Ramsey classes.
In particular, the locally finite subclass method is a way to apply the iterated partite construction without going through the entire process from scratch \cite{HN19}.
If $\mc R$ is a class of finite structures with the Ramsey property and $\mc K$ is a locally finite subclass of $\mc R$ with the hereditary property and the amalgamation property, and every $\mb A \in \mc K$ is irreducible, then $\mc K$ also has the Ramsey property.

To the best of the author's knowledge, no proof based on the partite construction was previously known for the Ramsey properties of the tree structures considered in this paper.
See \cite[Example 6.30 and Question 8.6]{HK26}.
If the language $\mc L$ is $\{R\}$, where $R$ is a binary relation symbol representing the edge relation, the class of finite trees in the language $\mc L$ is not closed under taking substructures.
If we take the hereditary closure of the class of finite trees in the relational language, then we get the class of finite forests, but it does not have the amalgamation property.
This motivates the study of structures on the leaves of trees, described by $C$-relations \cite[Example 2.20]{Bod15}.
Bodirsky and Piguet \cite{BP10} gave a direct proof of the Ramsey property of finite convexly ordered binary branching $C$-relations, and Bodirsky \cite[Section 5]{Bod15} noted that this result also follows from Milliken's tree theorem, while pointing out that no proof based on the partite construction was known.

Hubi\v{c}ka and Kone\v{c}n\'y \cite[Section 6.5]{HK26} describe the partite construction as a method for obtaining structural Ramsey theorems from unstructured Ramsey theorems.
\cite[Section 6.5]{HK26} also explains that some unstructured Ramsey theorems have
structural reflections that fall within the scope of Ne\v{s}et\v{r}il's classification program.
Some of the corollaries of Milliken's tree theorem can be seen as examples of such cases, and many Ramsey properties of classes of tree structures have been proved using either direct tree-specific Ramsey arguments or (corollaries of) Milliken's tree theorem.
From this perspective, it is natural to expect that the Ramsey properties of these classes would require specialized unstructured Ramsey arguments, and that the partite construction would not yield Ramsey witnesses for them without using a tree-specific unstructured Ramsey theorem as an input.

It is therefore an interesting question whether the Ramsey properties of classes of tree structures can be obtained from Ramsey witnesses produced by the partite construction, thereby bringing these classes within the scope of the partite methods used in Ne\v{s}et\v{r}il's classification program.
In this paper, we show that, through language expansions, reducts, and transfers of Ramsey witnesses, the Ramsey properties of several classes of tree structures and of structures on their leaves can be obtained from the partite construction.

\subsection*{Main results and method}

Here is a brief explanation of the procedure used in this paper.
$\mc{CT}$ is the class of finite convexly ordered treeable meet semilattices in the language $\mc L_0 = \{\sqcap, \lhd, <\}$, where $\sqcap$ is the meet operation, $\lhd$ is the tree order, and $<$ is the convex linear order.
A meet semilattice $\mb A$ is a structure with a partial order $\lhd$ such that, for every $a, b \in A$, there is a $\unlhd$-greatest lower bound $\sqcap(a, b)$.
$\mb A$ is treeable if, for every $a \in A$, the set of $\unlhd$-predecessors of $a$ is linearly ordered by $\lhd$.
It is convexly ordered by $<$ if $<$ is a linear order extending $\lhd$ and, whenever $a$ and $c$ are $\unlhd$-incomparable, $a \unlhd b$, $c \unlhd d$, and $a < c$, we have $b < d$.

In its original language $\mc L_0$, $\mc{CT}$ is not a locally finite subclass of the class of all finite $<$-linearly ordered set-valued $\mc L_0$-structures.
We extend the language $\mc L_0$ of $\mc{CT}$ by adding the level predicates $\{P_n: n \in \omega\}$ to get the expanded class $\mc{CT}_s$ in the language $\mc L_s$.
Then we further extend the language $\mc L_s$ of $\mc{CT}_s$ by adding the unary level-predecessor function $F$ and get the expanded class $\mc{CT}_F$ in the language $\mc L_F$.
We show that $\mc{CT}_s$ and $\mc{CT}_F$ are \fr classes and that the \fr limit of $\mc{CT}_F$ can be seen as a definitional expansion of the \fr limit of $\mc{CT}_s$, so that these two limits have the same automorphism group.
We also show that $\mc{CT}_F$ is a locally finite subclass of the class of all finite $<$-linearly ordered set-valued $\mc L_F$-structures.
Then Ramsey witnesses for $\mc{CT}_F$ can be obtained from the partite construction followed by completions, and we show that these witnesses can be transferred to the classes $\mc{CT}_s$ and $\mc{CT}$.
We can summarize the result as follows.
For the details, see Theorem \ref{thm:final Ramsey witness}.

\begin{theorem}\label{thm:intro 1}
    \begin{enumerate}
        \item Given $\mb A_F, \mb B_F \in \mc{CT}_F$ and $r \ge 2$, there is $\mb C_F \in \mc{CT}_F$ such that $\mb C_F \to (\mb B_F)^{\mb A_F}_r$, where $\mb C_F$ is obtained by the partite construction.
        \item Let $\mb A, \mb B \in \mc{CT}$ and $r \ge 2$.
        Starting from a natural tree structure $d^{<N}$, where natural numbers $d, N$ depend only on $\mb A, \mb B$ and $r$, we iteratively apply the partite construction for $\mc{CT}_F$ to obtain $\mb C_F$.
        Then $\mb C_F |_{\mc L_0} \to (\mb B)^{\mb A}_r$.
    \end{enumerate}
\end{theorem}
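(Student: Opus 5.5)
Part (1) will follow from the locally finite subclass method of Hubi\v{c}ka--Ne\v{s}et\v{r}il \cite{HN19}. We take as ambient Ramsey class $\mc R$ the class of all finite $<$-linearly ordered set-valued $\mc L_F$-structures; this class is Ramsey by the Ne\v{s}et\v{r}il--R\"odl theorem in its version for set-valued functional languages, itself proved by the partite construction. We must check the hypotheses for $\mc K=\mc{CT}_F$. Hereditarity and amalgamation come from showing that $\mc{CT}_F$ is a \fr class. Irreducibility holds because $F$ and $\sqcap$ are total functions: a substructure is closed, so no proper free amalgam can split a member of $\mc{CT}_F$. The substantive hypothesis is local finiteness. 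For $\mb C_0 \in \mc K$ we must produce $n=n(\mb C_0)$ such that every $\mb C \in \mc R$ with a homomorphism-embedding into $\mb C_0$ has a completion in $\mc K$ whenever all its substructures on at most $n$ vertices do.

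The plan for local finiteness is as follows. The level predicates $P_n$ are pulled back from $\mb C_0$, so only finitely many levels occur. Because $F$ is the level-predecessor, each vertex $c$ has a finite $F$-chain $c, F(c), F^2(c),\dots$ down to level $0$, with length bounded by the height of $\mb C_0$. Checking all substructures of size roughly $2\cdot\mathrm{height}(\mb C_0)+2$ lets us verify the following locally: $\lhd$ is exactly the transitive closure of $F$, $\sqcap(a,b)$ is the first common point of the two $F$-chains, and the convexity condition holds on pairs of chains. The completion is then $\mb C$ itself, possibly after deleting redundant set-values; it is a genuine treeable meet semilattice because every relevant relation is determined by bounded-size configurations. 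I expect this to be the main obstacle. Set-valued functions in $\mc R$ mean that $F(c)$ or $\sqcap(a,b)$ may a priori be sets, and one must show that the local conditions force singletons, or that a consistent choice yields a structure in $\mc K$. If that fails, I would instead work with the closure description via $F$-chains and state a bounded-size obstruction set explicitly.

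For part (2), we first handle $\mb A,\mb B\in\mc{CT}$ by expanding them to $\mc{CT}_F$. Fix $N$ exceeding the heights involved, and fix $d$ bounding the branching in $\mb B$. Every $\mc L_0$-embedding of $\mb A$ into the natural tree $d^{<N}$, or into an $\mc{CT}_F$-structure of height $N$, induces an $\mc L_F$-expansion of $\mb A$: we take the induced levels and the $F$-closure of the image. Up to isomorphism there are only finitely many such expansions $\mb A^1,\dots,\mb A^k$. We likewise fix an expansion $\mb B_F$ of $\mb B$ by embedding it into $d^{<N}$, and take the closure, which is finite.

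We then iterate the Ramsey witness of part (1) $k$ times. Starting from $\mb C^0=\mb B_F$, we let $\mb C^{i}\to(\mb C^{i-1})^{\mb A^i}_r$, with each step obtained by the partite construction. Here we may arrange every $\mb C^i$ to extend $d^{<N}$, by first embedding into a closure of $d^{<N}$, which is the role of the starting tree. Given an $r$-colouring of $\emb(\mb A,\mb C_F|_{\mc L_0})$, each such embedding has a closure isomorphic to some $\mb A^i$, which turns the colouring into colourings of $\emb(\mb A^i,\cdot)$. Descending through the iteration then yields a copy of $\mb B_F$ on which all expansion types are monochromatic. Finally, the remaining work is to check that every $\mc L_0$-embedding of $\mb A$ into $\mb B$ corresponds uniquely, via the closure, to an $\mc L_F$-embedding of some $\mb A^i$ into $\mb B_F$. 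Once that correspondence is established, it gives $\mb C_F|_{\mc L_0}\to(\mb B)^{\mb A}_r$. The difficulty here is ensuring that the closure is a rigid function of the $\mc L_0$-embedding, and I would secure this using the definitional-expansion relationship between the \fr limits of $\mc{CT}_s$ and $\mc{CT}_F$.
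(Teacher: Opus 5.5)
For part (1) your overall strategy is right, but the local finiteness step, which you yourself flag as the heart of the matter, is not carried out, and your sketch of it would not work.

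\emph{The definition is misread.} In Definition~\ref{def:LFS}, $\mb C_0$ ranges over $\mc R_F$, not over $\mc{CT}_F$. If $\mb C_0$ were in $\mc{CT}_F$, it would itself be a completion of $\mb C$ and there would be nothing to prove. Also, $\mb C$ is an arbitrary finite set-valued structure, not a member of $\mc R_F$. Most importantly, you drop condition~(2): every irreducible substructure of $\mb C$ lies in a copy of $\mb B$.

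\emph{That condition does the local work.} In the paper it is condition~(2), with $n=1$, and not bounded-size checks against $\mb C_0$. The subset $\hull{a}{\mb C}$ is irreducible, and so is $\hull{a,b}{\mb C}$ whenever $\sqcap^{\mb C}(a,b)\neq\emptyset$ or $a<^{\mb C}b$. Hence condition~(2) gives singleton values, totality of $F$ and treeability (Observation~\ref{obs:basic}). By contrast, $\mb C_0$ need not be a tree and is used only to see that $<^{\mb C}$ is acyclic.

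\emph{The completion is not $\mb C$ itself.} In general $\mb C$ has several $\lhd^{\mb C}$-connected components, each with its own $P_0$-root. Moreover, $\sqcap^{\mb C}$ is empty on many pairs, and $<^{\mb C}$ is merely acyclic. A completion must therefore:
\begin{enumerate}
\item define meets as $\lhd^{\mb C}$-greatest lower bounds inside each component, consistently with existing values;
\item extend $<^{\mb C}$ to a convex linear order;
\item identify all the roots, so the completion map is not even injective.
\end{enumerate}

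\emph{The key missing argument is that of Claim~\ref{claim:convex}.} Suppose $a,b$ are immediate successors of $c$, and $x\unrhd^{\mb C}a$, $y\unrhd^{\mb C}b$ with $x<^{\mb C}y$. Then $\hull{x,y}{\mb C}$ is irreducible, contains $a$ and $b$, and lies in a copy of $\mb B$. Convexity in $\mb B$ then already gives $a<^{\mb C}b$. So all constraints on siblings lie in the acyclic relation $<^{\mb C}$. A compatible linear order of siblings therefore exists, and the induced lexicographic order is the required convex extension.

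Part (2) has a genuine gap at its final step. Iterating over the closure types $\mb A^1,\dots,\mb A^k$, starting from a single $\mb B_F$, gives a copy of $\mb B_F$ in which \emph{each} type is monochromatic. Different types may still receive different colours. Inside one copy of $\mb B_F$ the embeddings of $\mb A$ really do realize different types, because $\mc L_F$-embeddings preserve levels.

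For example, let $\mb A$ be one vertex and $\mb B$ a chain $b_0\lhd b_1$, which $\mb B_F$ places on levels $\ell_0<\ell_1$. Colour a vertex of $\mb C_F$ red if it lies on level $\ell_0$ and blue otherwise. Then every $\mc L_F$-copy of $\mb B_F$ has $b_0$ red and $b_1$ blue, so your descent never produces a monochromatic copy.

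What is missing is a Ramsey argument on the choice of levels, together with a rigidification that makes this choice the only remaining freedom. The paper proceeds as follows.
\begin{enumerate}
\item It expands $\mb A,\mb B$ so that each vertex has its own level and $<_{\len}$ coincides with $<$ (Lemma~\ref{lemma:lex = len}). Then $\emb_{\mc L_0}(\mb A,\mb B)=\emb_{\mc L_{\str}}(\mb A',\mb B')$, and the only remaining type information of a copy of $\mb A'$ is its level set $S\in\binom{N}{k}$, where $k=|A|$.
\item It starts from $d^{<N}$ precisely because this tree contains the level expansion $\mb B'_T$ for every $T\in\binom{N}{n}$, where $n=|B|$.
\item Iterating $\mb D_{i+1}\to(\mb D_i)^{\mb A'_{S_i}}_r$ over all $S_i$ makes the colour depend only on the level set.
\item Finally, $N\to(n)^k_r$ provides $T$ with $\binom{T}{k}$ monochromatic, and $\delta\circ\beta_T$ is the required copy (Proposition~\ref{prop:str Ramsey}, Theorem~\ref{thm:final Ramsey witness}).
\end{enumerate}
In particular, $N$ must grow with $r$: in the example above, any witness for $r$ colours needs a branch with more than $r$ vertices. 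Your $N$ depends only on heights, and your argument never exploits extra levels. The rigidity of closures that you single out is not the real difficulty, since a closure is generated by the image of $A$.
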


A finite convexly ordered $C$-relation can be regarded as a structure on the leaves of a convexly ordered tree in $\mc{CT}$.
$C$-relations can be axiomatized by imposing several conditions on a ternary relation $C$.
Let $\mc C^<$ be the class of finite convexly ordered $C$-relations.
For every $\mb A \in \mc C^<$, we canonically obtain $\mb A$ from a structure $\mb A_T \in \mc{CT}$ by taking its leaf structure $L(\mb A_T)$;
conversely, starting from $\mb A$, we construct a canonical tree $T(\mb A) \in \mc{CT}$ whose leaf structure is canonically isomorphic to $\mb A$.
Moreover, if $\mb A_T, \mb B_T \in \mc{CT}$ and $\alpha_T \in \emb(\mb A_T, \mb B_T)$, $\alpha_T$ naturally induces an embedding $\alpha \in \emb(L(\mb A_T), L(\mb B_T))$;
conversely, if $\mb A, \mb B \in \mc C^<$ and $\alpha \in \emb(\mb A, \mb B)$, then we can naturally define $T(\alpha) \in \emb(T(\mb A), T(\mb B))$.
Using these constructions, we can transfer a Ramsey witness in $\mc{CT}$ to a Ramsey witness for convexly ordered $C$-relations.
Then we argue that every $\mb A_T \in \mc{CT}$ can be transformed into a binary branching tree $\bin(\mb A_T)$ with the same leaf set, and that this transformation preserves embeddings of binary branching $C$-relations into the leaf structure.
Finally, we conclude that a Ramsey witness in $\mc{CT}$ can be transferred to a Ramsey witness in $\mc C^<_{\bin}$, the class of finite convexly ordered binary branching $C$-relations.
We summarize the result for convexly ordered $C$-relations as follows.
For the details, see Theorems \ref{thm:C-rel} and \ref{thm:bin C-rel}.

\begin{theorem}\label{thm:intro 2}
    \begin{enumerate}
        \item Let $\mb A, \mb B \in \mc C^<$, $r \ge 2$.
        By Theorem \ref{thm:intro 1}, we can obtain $\mb C_T \in \mc{CT}$ such that $\mb C_T \rightarrow (T(\mb B))^{T(\mb A)}_r$ by iteratively applying the partite construction.
        Then $L(\mb C_T) \in \mc C^<$ satisfies $L(\mb C_T) \rightarrow (\mb B)^{\mb A}_r$.
        \item Let $\mb A, \mb B \in \mc C^<_{\bin}$, $r \ge 2$.
        By Theorem \ref{thm:intro 1}, we can obtain $\mb C_T \in \mc{CT}$ such that $\mb C_T \rightarrow (T(\mb B))^{T(\mb A)}_r$ by iteratively applying the partite construction.
        Then $L(\bin(\mb C_T)) \in \mc C^<_{\bin}$ satisfies $L(\bin(\mb C_T)) \rightarrow (\mb B)^{\mb A}_r$.
    \end{enumerate}
\end{theorem}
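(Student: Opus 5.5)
We transfer a Ramsey witness from $\mc{CT}$ to $\mc C^<$ (and $\mc C^<_{\bin}$) using the maps $L$ and $T$, following the usual pattern of pulling colorings back along a correspondence of embeddings. The key facts to establish first are the following. (a) For $\mb A\in\mc C^<$, the leaves of $T(\mb A)$ correspond canonically to $\mb A$, i.e. $L(T(\mb A))\cong\mb A$. (b) Every $\alpha_T\in\emb(\mb A_T,\mb B_T)$ induces $L(\alpha_T)\in\emb(L(\mb A_T),L(\mb B_T))$. This needs leaves to go to leaves; if that fails for general embeddings, we only apply it to embeddings of canonical trees $T(\mb A)$ whose images are ``leaf-generated''. (c) Every $\alpha\in\emb(\mb A,\mb C)$, with $\mb C=L(\mb C_T)$, extends to an embedding $\widehat\alpha\in\emb(T(\mb A),\mb C_T)$. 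We take $\widehat\alpha$ to send each meet $a\sqcap b$ to the meet in $\mb C_T$ of the images. This is well defined and preserves $\sqcap,\lhd,<$ because $T(\mb A)$ is the meet-closure of its leaves and the $C$-relation and convex order on leaves determine the meet structure and order on the closure. (d) The assignments are mutually inverse: $L(\widehat\alpha)=\alpha$, and conversely $\widehat{L(\gamma)}=\gamma$ for $\gamma\in\emb(T(\mb A),\mb C_T)$, since $\gamma$ is determined by its values on the generating leaves. (e) The correspondence is compatible with composition: $\widehat{\beta\circ\alpha}=\widehat\beta\circ T(\alpha)$.

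\textbf{Part (1).} Given $\chi:\emb(\mb A,L(\mb C_T))\to r$, define $\chi_T(\gamma)=\chi(L(\gamma))$ on $\emb(T(\mb A),\mb C_T)$. By Theorem \ref{thm:intro 1} we choose $\mb C_T\to(T(\mb B))^{T(\mb A)}_r$, so there is $\delta\in\emb(T(\mb B),\mb C_T)$ on which $\chi_T$ is constant on $\delta\circ\emb(T(\mb A),T(\mb B))$. Put $\beta=L(\delta)\in\emb(\mb B,L(\mb C_T))$. For $\alpha\in\emb(\mb A,\mb B)$, fact (e) gives $\beta\circ\alpha=L(\delta\circ T(\alpha))$, so $\chi(\beta\circ\alpha)=\chi_T(\delta\circ T(\alpha))$, and these values are all equal. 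We also check that $L(\mb C_T)\in\mc C^<$, i.e. the leaf structure of any member of $\mc{CT}$ satisfies the $C$-relation axioms with a convex order.

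\textbf{Part (2).} Define $\bin(\mb C_T)$ by resolving each node with $k>2$ children, ordered $c_1<\dots<c_k$ by the convex order, into a right comb of new binary nodes. The leaf set and convex order stay the same. Every $\beta$ embedding a binary branching $C$-relation into $L(\mb C_T)$ remains an embedding into $L(\bin(\mb C_T))$. The reason is that for three leaves with pairwise distinct meets in the original tree, or with two equal meets, the triple's $C$-relation is unchanged. When three leaves of a binary image branch at a single $k$-ary node, $L(\mb C_T)$ records them as mutually incomparable, which cannot happen in a binary structure, so that case is excluded. We then run the argument of part (1) with $\emb(\mb A,L(\bin(\mb C_T)))\supseteq\emb(\mb A,L(\mb C_T))$. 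The delicate point is that some embeddings into $L(\bin(\mb C_T))$ are not embeddings into $L(\mb C_T)$. This is harmless, since we only color those that are, and the monochromatic copy $\beta$ found in $L(\mb C_T)$ maps to a copy in $L(\bin(\mb C_T))$. What must be shown is $\beta\circ\emb(\mb A,\mb B)$ computed in either target is the same set, which holds because $\emb(\mb A,\mb B)$ does not depend on the target.

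\textbf{Main obstacle.} The hardest step is (c)–(d): verifying that $\widehat\alpha$ preserves the convex order on internal nodes, and that meets of leaves in $\mb C_T$ land correctly, so that the bijection $\emb(\mb A,L(\mb C_T))\cong\emb(T(\mb A),\mb C_T)$ holds. I would first prove that for leaves $x,y,z$, $C(x;y,z)$ iff $x\sqcap y\lhd y\sqcap z$. From this it follows that the order type of the meet-closure of a leaf set is determined by its $C$-relation and $<$, and the bijection is then immediate.
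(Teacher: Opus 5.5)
\textbf{Gap in Part (1): embeddings need not send leaves to leaves.} Your overall plan is the paper's plan: pull the coloring back along a map $\emb(T(\mb A),\mb C_T)\to\emb(\mb A,L(\mb C_T))$ that is compatible with precomposition by $T(\alpha)$, and for (2) resolve high-degree nodes into combs. But your map $\gamma\mapsto L(\gamma)$ does not work, and steps (b) and (d) fail.

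An $\mc L_0$-embedding $\gamma\in\emb(T(\mb A),\mb C_T)$ preserves only $\sqcap,\lhd,<$. Nothing forces it to send $\lhd$-maximal elements of $T(\mb A)$ to $\lhd$-maximal elements of $\mb C_T$. So $L(\gamma)$ is undefined for many $\gamma$, and $\chi_T(\gamma)=\chi(L(\gamma))$ is not defined on all of $\emb(T(\mb A),\mb C_T)$.

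The decisive problem is the monochromatic $\delta\in\emb(T(\mb B),\mb C_T)$ supplied by $\mb C_T\to(T(\mb B))^{T(\mb A)}_r$. It may send the leaves of $T(\mb B)$ to internal vertices. For example, adding a child above every leaf of a witness yields another witness, and $\delta$ may land in the old part. Then $\beta=L(\delta)$ is not even a map into $L(\mb C_T)$. Restricting attention to leaf-preserving embeddings does not help, because the arrow gives you no control over which $\delta$ you get. Giving the bad embeddings their own color also fails: if $\delta$ misses the leaves, every $\delta\circ T(\alpha)$ is bad.

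In particular, the bijection $\emb(\mb A,L(\mb C_T))\cong\emb(T(\mb A),\mb C_T)$ claimed in (d) is false. Already for $|A|=1$, the left side is the set of leaves of $\mb C_T$ while the right side is all of $C_T$.

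\textbf{The fix: push images up to leaves.} Let $*_{\mb C_T}(u)$ be the $<$-least leaf above $u$. For $\unlhd$-incomparable $u,v$ one has $\sqcap(*_{\mb C_T}(u),*_{\mb C_T}(v))=\sqcap(u,v)$, and $u<v$ if and only if $*_{\mb C_T}(u)<*_{\mb C_T}(v)$ (Observation \ref{obs:tree*}).

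The leaves $[a,a]$ of $T(\mb A)$ are pairwise incomparable, and embeddings preserve incomparability. Hence $\alpha^*:=*_{\mb C_T}\circ\alpha_T\circ\iota_{\mb A}$ is an embedding $\mb A\to L(\mb C_T)$ for every $\alpha_T$. Moreover $(\beta_T\circ T(\alpha))^*=\beta^*\circ\alpha$, because $T(\alpha)$ maps $[a,a]$ to $[\alpha(a),\alpha(a)]$. Setting $\chi_T(\alpha_T)=\chi(\alpha^*)$ and taking $\beta^*$ instead of $L(\delta)$ then finishes Part (1) exactly as you intended.

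Only this one-directional, composition-compatible map is needed. Your steps (c)--(d), extending leaf embeddings to tree embeddings and proving a bijection, are what you flagged as the main obstacle, and they can be dropped entirely.

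\textbf{Part (2) is fine once Part (1) is repaired.} It is even slightly more modular than the paper's. For binary $\mb X$, the inclusion $\emb(\mb X,L(\mb C_T))\subseteq\emb(\mb X,L(\bin(\mb C_T)))$ holds for two reasons:
\begin{itemize}
\item $\bin$ preserves configurations $\sqcap(a,b)=\sqcap(a,c)\lhd\sqcap(b,c)$ among original vertices.
\item The negative instances follow from binary branching of $\mb X$ together with axioms (1)--(2).
\end{itemize}
This inclusion lets you apply Part (1) to $\mb A,\mb B\in\mc C^<_{\bin}\subseteq\mc C^<$ and reuse the same monochromatic copy. The paper instead reruns the transfer with $\alpha^*_{\bin}=\bin_{\mb C_T}\circ\alpha^*$, which amounts to the same thing.
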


\subsection*{Model-theoretic and infinitary context}

In this subsection, we briefly introduce the modeling property in model theory and big Ramsey degrees, although neither is used in the remainder of the paper.
Understanding the definition of the modeling property requires some familiarity with basic model theory.
The tree languages $\mc L_0, \mc L_{\str}$ and $\mc L_s$ in this paper also arise in the study of generalized indiscernible sequences and in model-theoretic classification theory.
Many dividing lines of classification theory can be characterized by having or omitting some combinatorial configurations satisfying several conditions, and these configurations are often in the form of trees.
The modeling property is particularly useful when studying these configurations.

Let $\mb A$ be an $\mc L_A$-structure, and let $ I =(b_a)_{a \in A}$ be a sequence of tuples of a fixed finite length in a sufficiently saturated $\mc L$-model $\mb M$.
The sequence $I$ is an {\em $\mb A$-indexed indiscernible sequence} if, for any $a_0,\ldots,a_{n-1},a_0',\ldots,a_{n-1}'\in A$, whenever $a_0\cdots a_{n-1}$ and $a_0'\cdots a_{n-1}'$ have the same quantifier-free $\mc L_A$-type, the tuples
$b_{a_0}\cdots b_{a_{n-1}}$ and $b_{a_0'}\cdots b_{a_{n-1}'}$ have the same complete first-order $\mc L$-type.
An $\mb A$-indexed sequence $J=(c_a)_{a\in A}$ of tuples of the same length in $\mb M$ is {\em (locally) based on $I$} if, for any $a_0,\ldots,a_{n-1}\in A$ and any $\mc L$-formula $\varphi(\bar x)$ of the appropriate arity, there are $a_0',\ldots,a_{n-1}'\in A$ such that $a_0\cdots a_{n-1}$ and $a_0'\cdots a_{n-1}'$ have the same quantifier-free $\mc L_A$-type and $\mb M\models \varphi(c_{a_0}\cdots c_{a_{n-1}}) \leftrightarrow \varphi(b_{a_0'}\cdots b_{a_{n-1}'})$.
We say that $\mb A$-indexed indiscernible sequences have the {\em modeling property} if, for every language $\mc L$, every sufficiently saturated $\mc L$-model $\mb M$, and every $\mb A$-indexed sequence $I$ of tuples of a fixed finite length in $\mb M$, there is an $\mb A$-indexed indiscernible sequence $J$ of tuples of the same length in $\mb M$ locally based on $I$.
If $\mb A$ is locally finite and linearly ordered, then by the results in \cite{MR3373612, Sco21}, $\age(\mb A)$ has the Ramsey property if and only if $\mb A$-indexed indiscernible sequences have the modeling property.

For example, let $\mb{CT}$ be the \fr limit of $\mc{CT}$.
The tree modeling theorem in \cite{TT12} implies that $\mb{CT}$-indexed indiscernible sequences have the modeling property.
Related tree modeling theorems were also proved in \cite{KKS14} and are used there in the proof of the dichotomy that a first-order theory has the tree property (TP, i.e., the theory is not simple) if and only if it has $\operatorname{TP}_1$ or $\operatorname{TP}_2$ \cite[Theorem 5.9]{KKS14}.
Much recent research in structural Ramsey theory concerns big Ramsey degrees of countable structures.
For a finite substructure $\mb A$ of a countable structure $\mb M$, the big Ramsey degree of $\mb A$ in $\mb M$ is the least natural number $t$, if one exists, such that for every $2 \le r \in \omega$ and every $r$-coloring $\chi: \emb(\mb A, \mb M) \to r$, there is $\mu \in \emb(\mb M, \mb M)$ such that $|\chi(\mu \circ \emb(\mb A, \mb M))| \le t$.
Recent progress includes the determination of the exact degrees for \fr limits of finitely constrained free amalgamation classes in finite binary relational languages \cite{BCDHKVZ26}.
Expansions encoding these degrees, called big Ramsey structures, provide another connection between partition properties and additional structural information.
We refer to \cite{Dob23,HZ25} for surveys.

\subsection*{Organization of the paper}

In Section \ref{subsec:set-valued}, we explicitly distinguish between ordinary structures and set-valued structures to avoid any confusion, and introduce some basic terminology.
Section \ref{subsec:Fraisse} recalls \fr theory and the Ramsey property for set-valued structures.
Section \ref{subsec:LFS} briefly explains the method of proving
Ramsey properties of classes using the notion of locally finite
subclasses, and points out that if $\mc K$ has HP and AP and is
a locally finite subclass of the class of all finite
$<$-linearly ordered set-valued structures in a fixed language,
then for any $\mb A, \mb B \in \mc K$ and any integer $r \ge 2$,
a Ramsey witness for $\mb A, \mb B$ and $r$ can be obtained from
the iterated partite construction followed by completions.
Section \ref{sec:aut transfer} explains how we can transfer Ramsey witnesses when the automorphism groups of corresponding \fr limits are the same.
Section \ref{sec:Fraisse} introduces the tree languages $\mc L_0, \mc L_{\str}, \mc L_s$ and $\mc L_F$, and the corresponding classes of tree structures
$\mc{CT}, \mc{CT}_{\str}, \mc{CT}_s$ and $\mc{CT}_F$ in this paper.
In Section \ref{sec:partite}, we prove that $\mc{CT}_F$ is a locally finite subclass of the class of all finite linearly ordered set-valued $\mc L_F$-structures.
Section \ref{sec:C-rel} obtains Ramsey witnesses for convexly
ordered $C$-relations from Ramsey witnesses in $\mc{CT}$.
Finally, Section \ref{sec:discussion} proves that
$\mc{CT}, \mc{CT}_{\str}$ and $\mc{CT}_s$ are not locally finite
subclasses of the corresponding classes of finite linearly ordered
set-valued structures, and asks whether the partite construction
can also recover the Ramsey property of the classes
$\mathcal{DT}_m$ of trees with fixed branch labels.

\section{Preliminaries}\label{sec:pre}

Throughout, $\omega$ is the set of natural numbers (and it contains $0$).
We may write $n < \omega$ instead of $n \in \omega$.
If $0 < n \in \omega$, then $n = \{0, 1, \ldots, n-1\}$ and $0 = \emptyset$.

\subsection{Set-valued structures}\label{subsec:set-valued}

For an ordinary structure $\mb M$ with universe $M$, an $n$-ary function symbol is interpreted as a map from $M^n$ to $M$.
We will also consider interpretations whose values are subsets of the universe.
We distinguish these set-valued structures from ordinary structures as follows.

\begin{definition}[Set-valued and partial structures]\label{def:set-valued strs}\ 
    \begin{enumerate}
        \item A {\em language} $\mathcal L$ consists of {\em relation symbols}, {\em function symbols}, and {\em constant symbols}, each with a specified {\em arity}.
        \item A {\em set-valued $\mathcal L$-structure} $\mathbf M$ consists of a set $M$, called its {\em universe}, together with {\em interpretations} of the symbols in $\mathcal L$ as follows.
        \begin{enumerate}
            \item If $R \in \mc L$ is an $n$-ary relation symbol, then the interpretation $R^{\mb M}$ of $R$ in $\mb M$ is a subset of $\mb M^n$.
            \item If $F \in \mc L$ is an $n$-ary function symbol where $n \ge 1$, then the interpretation $F^{\mb M}$ of $F$ in $\mb M$ is a subset of $M^{n+1}$.
            \begin{enumerate}
                \item For $\bar{a} \in M^n$, $F^{\mb M}(\bar{a}) = A$ if $A = \{a \in M: (\bar{a}, a) \in F^{\mb M}\}$.
                \item $\dom(F^{\mb M}) = \{\bar{a} \in M^n: F^{\mb M}(\bar{a}) \ne \emptyset\}$.
            \end{enumerate}
            \item If $c \in \mc L$ is a constant symbol (equivalently, a $0$-ary function symbol), then the interpretation $c^{\mb M}$ of $c$ in $\mb M$ is an element of $\mb M$.
        \end{enumerate}
        \item We call a set-valued $\mc L$-structure $\mathbf M$ an {\em $\mc L$-structure} if $|F^{\mathbf M}(\bar a)|=1$ for every function symbol $F \in \mc L$ of positive arity $n$ and every $\bar a \in M^n$.
        \item We call a set-valued $\mc L$-structure $\mathbf M$ a {\em partial $\mc L$-structure} if $|F^{\mathbf M}(\bar a)|\leq 1$ for every function symbol $F \in \mc L$ of positive arity $n$ and every $\bar a\in M^n$.
        \item We use boldface letters to denote set-valued structures, and the corresponding plain letters to denote their universes.
        \item If $\mathcal L' \subseteq \mathcal L$ and $\mathbf M$ is a set-valued $\mathcal L$-structure, its {\em $\mathcal L'$-reduct} $\mathbf M|_{\mathcal L'}$ is obtained by forgetting the interpretations of the symbols in $\mathcal L\setminus\mathcal L'$.
        We call $\mathbf M$ an {\em expansion} of $\mathbf M|_{\mathcal L'}$.
    \end{enumerate}
\end{definition}

Unless otherwise stated, all languages and universes of structures are at most countable.

\begin{remark}\label{remark:formula and satisfaction}
    We use first-order formulas and satisfaction in the usual sense for ordinary $\mathcal L$-structures; see, for example, \cite{Hodges_1993}.
    In a general set-valued or partial $\mc L$-structure, the interpretation of an $\mc L$-term need not be an element of the structure, so the satisfaction of a first-order formula by a set-valued or partial structure is not well-defined.
    Alternatively, one may replace each function symbol by a relation symbol of arity one higher, and transform a set-valued structure into an ordinary structure in the corresponding language.
    But we need to adjust the notion of substructures, so that the relation symbols that came from the function symbols in the original language are closed in the substructure.
    This relational transform is used in the recursive partite construction based on the unrestricted Ne\v{s}et\v{r}il--R\"odl theorem;
    see \cite[Appendix A.4]{HK26} and \cite{HN19}.
\end{remark}

\begin{notation}\label{nota: partial function}
    If $\mathbf A$ is a partial $\mathcal{L}$-structure and $F \in \mathcal{L}$ is a function symbol, then we will write $F^\mathbf A(a_0, \ldots, a_{n-1}) = b$ instead of $F^\mathbf A(a_0, \ldots, a_{n-1}) = \{b\}$.
\end{notation}

\begin{definition}[Structure-preserving maps]\label{def of hom and emb}
    Let $\mathcal{L}$ be a language, $\mathbf A$, $\mathbf B$ be set-valued $\mathcal{L}$-structures, $f: A \rightarrow B$.
    \begin{enumerate}
        \item $f$ is a {\em homomorphism} if
        \begin{enumerate}
            \item for every relation symbol $R \in \mathcal{L}$,
            $$\text{if }(a_0, \ldots, a_{n-1}) \in R^{\mathbf A} \text{, then } (f(a_0), \ldots, f(a_{n-1})) \in R^{\mathbf B},$$
            and
            \item for every function symbol $F \in \mathcal{L}$,
            $$f(F^{\mathbf A}(a_0, \ldots, a_{n-1})) \subseteq F^{\mathbf B}(f(a_0), \ldots, f(a_{n-1})).$$
        \end{enumerate}
        \item $f$ is an {\em embedding} if $f$ is injective, and for any relation symbol $R \in \mc L$ and function symbol $F \in \mc L$,
        \begin{enumerate}
            \item $(a_0, \ldots, a_{n-1}) \in R^{\mathbf A}$ if and only if $(f(a_0), \ldots, f(a_{n-1})) \in R^{\mathbf B}$, and
            \item $f(F^{\mb A}(a_0, \ldots, a_{m-1})) = F^{\mb B}(f(a_0), \ldots, f(a_{m-1}))$.
        \end{enumerate}
        If such an $f$ exists, we also say that $\mb A$ can be {\em embedded} in $\mb B$ or $\mb A$ {\em embeds} into $\mb B$.
        We can naturally regard $f(\mb A)$ as a substructure of $\mb B$ if $f$ is an embedding.
        \item $\operatorname{Emb}(\mathbf A, \mathbf B)$ is the set of all embeddings from $\mathbf A$ to $\mathbf B$.
        We sometimes write $\operatorname{Emb}_{\mathcal L}(\mathbf A, \mathbf B)$ to emphasize the underlying language $\mc L$.
        If $\mathbf C$ is a set-valued $\mathcal{L}$-structure and $\beta \in \emb(\mathbf B, \mathbf C)$, then $\beta \circ \emb(\mathbf A, \mathbf B) = \{\beta \circ \alpha: \alpha \in \emb(\mathbf A, \mathbf B)\}$.
        \item If the inclusion map from $A$ to $B$ is an embedding of $\mb A$ into $\mb B$, then we say that $\mathbf A$ is a {\em substructure} of $\mathbf B$ and write $\mathbf A \subseteq \mathbf B$.
        \item For $A' \subseteq A$, $\hull{A'}{\mb A}$ is the {\em hull} of $A'$ in $\mathbf A$, i.e., the smallest substructure of $\mathbf A$ containing $A'$.
        We may omit the subscript and write $\hull{A'}{}$ when the ambient structure is clear.
        Further, for ease of notation, we may write $\hull{a, b, c}{}$ instead of $\hull{\{a,b,c\}}{}$.
        \item $\mathbf A$ is {\em finitely generated} if $\mathbf A = \hull{A'}{\mb A}$ for some finite subset $A'$ of $A$.
        \item $\mb A$ is {\em locally finite} if $\hull{A'}{\mb A}$ is finite for every finite subset $A'$ of $A$.
        \item $f$ is an {\em isomorphism} if $f$ is a surjective embedding.
        \item $f$ is an {\em automorphism} of $\mathbf A$ if $f$ is an isomorphism and $\mathbf A = \mathbf B$.
        The {\em automorphism group} of $\mathbf A$ is denoted by $\aut(\mathbf A)$.
    \end{enumerate}
\end{definition}

Note that if $f: \mb A \to \mb B$ is a homomorphism and $F \in \mc L$ is a function symbol, then possibly $F^{\mb A}(\bar{a}) = \emptyset$ but $F^{\mb B}(f(\bar{a})) \ne \emptyset$.
For partial structures, this agrees with the definition in \cite[Section 1.1]{HN19}.
(In the definition of a homomorphism in \cite[Section 2.1]{HK26}, the equality for function values should be read as an inclusion.
We use this corrected formulation throughout.)

\begin{definition}[Definable relations and functions]\label{def:definable}
    Let $\mathbf M$ be an $\mathcal{L}$-structure.
    \begin{enumerate}
        \item $R \subseteq M^n$ is a ($\emptyset$-){\em definable relation in $\mathbf{M}$} if there is an $\mathcal{L}$-formula $\varphi(\bar{x})$ such that $\bar{a} \in R$ if and only if $\mathbf M \models \varphi(\bar{a})$.
        \item $F: M^n \rightarrow M$ is a ($\emptyset$-){\em definable function in $\mathbf{M}$} if there is an $\mathcal{L}$-formula $\varphi(\bar{x}, y)$ such that $F(\bar{a}) = b$ if and only if $\mathbf M \models \varphi(\bar{a}, b)$.
    \end{enumerate}
\end{definition}

Note that we can define definable constants as the definable $0$-ary functions.
That is, $c \in M$ is definable if there is an $\mathcal{L}$-formula $\varphi(y)$ such that $\{c\} = \{a \in M: \mathbf M \models \varphi(a)\}$.

\subsection{\fr theory, Ramsey classes, and the Kechris--Pestov--Todor\v{c}evi\'c correspondence}\label{subsec:Fraisse}

\begin{definition}\label{def:amalgamation class}
    Let $\mathcal{K}$ be a class of finitely generated $\mathcal{L}$-structures.
    $\mathcal{K}$ is called an {\em amalgamation class} if it is closed under isomorphisms and satisfies the following three conditions.
    \begin{enumerate}
        \item Hereditary property (HP):
        If $\mathbf B \in \mathcal{K}$ and $\mathbf A$ is a finitely generated substructure of $\mathbf B$, then $\mathbf A \in \mathcal{K}$.
        \item Joint embedding property (JEP):
        If $\mathbf A, \mathbf B \in \mathcal{K}$, then there is $\mathbf C \in \mathcal{K}$ such that $\mathbf A$ and $\mathbf B$ can be embedded in $\mathbf C$.
        \item Amalgamation property (AP):
        If $\mathbf A, \mathbf B_1, \mathbf B_2 \in \mathcal{K}$ and there are $\alpha_1 \in \emb(\mathbf A$, $\mathbf B_1)$ and $\alpha_2 \in \emb(\mathbf A, \mathbf B_2)$, then there are $\mathbf C\in \mathcal{K}$, $\beta_1 \in \emb(\mathbf B_1, \mathbf C)$ and $\beta_2 \in \emb(\mathbf B_2, \mathbf C)$ such that $\beta_1 \circ \alpha_1 = \beta_2 \circ \alpha_2$.
    \end{enumerate}
\end{definition}

For the languages without any constant symbol, we admit the empty structure.
For hereditary classes in these languages, JEP follows from AP by amalgamating over the empty structure.

\begin{definition}\label{def:age, uh}
    Let $\mathbf M$ be an $\mathcal{L}$-structure.
    \begin{enumerate}
        \item The {\em age} of $\mathbf M$, denoted by $\age(\mathbf M)$, is the class of all finitely generated $\mc L$-structures that embed into $\mb M$.
        \item The structure $\mathbf M$ is {\em ultrahomogeneous} if every isomorphism between finitely generated substructures of $\mathbf M$ extends to an automorphism of $\mathbf M$.
    \end{enumerate}
\end{definition}

Whenever convenient, we identify a member of $\operatorname{age}(\mathbf M)$ with an isomorphic copy in $\mathbf M$.

\begin{theorem}[\fr \cite{Fra86}]\label{thm: Fraisse}
    Let $\mathcal{L}$ be an at most countable language.
    \begin{enumerate}
        \item Suppose that $\mathcal{K}$ is a nonempty amalgamation class with at most countably many structures up to isomorphism, that is, $\mc K$ is a {\em \fr class}.
        Then there is an at most countable ultrahomogeneous $\mathcal{L}$-structure $\mathbf M$ such that $\age(\mathbf M) = \mathcal{K}$.
        Such a structure $\mathbf M$ is unique up to isomorphism.
        \item Conversely, if $\mathbf M$ is an at most countable ultrahomogeneous $\mathcal{L}$-structure, then $\age(\mathbf M)$ is a \fr class.
    \end{enumerate}
\end{theorem}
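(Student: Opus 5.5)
The plan is the classical back-and-forth argument for Fraïssé's theorem, adapted to finitely generated structures in an at most countable language.

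For (1), fix representatives $\mathbf A_0, \mathbf A_1, \ldots$ of the isomorphism types in $\mathcal K$; there are at most countably many. Enumerate all triples $(\mathbf A, \mathbf B, \alpha)$ with $\mathbf A, \mathbf B$ among these representatives and $\alpha \in \emb(\mathbf A, \mathbf B)$. Since finitely generated structures in a countable language are countable, each $\emb(\mathbf A, \mathbf B)$ is at most countable, so there are at most countably many such triples. Build a chain $\mathbf M_0 \subseteq \mathbf M_1 \subseteq \cdots$ in $\mathcal K$ as follows. At each stage, use a bookkeeping function (a bijection $\omega \to \omega \times \omega$) to choose a pair consisting of a finitely generated substructure of some earlier $\mathbf M_i$ isomorphic to $\mathbf A$, together with an extension problem $\mathbf A \to \mathbf B$ from the enumeration. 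Then use AP to amalgamate $\mathbf M_n$ with $\mathbf B$ over $\mathbf A$, obtaining $\mathbf M_{n+1}$; interleave JEP steps so that every $\mathbf A_k$ is embedded at some stage. A finitely generated structure has only countably many finite generating tuples, hence only countably many finitely generated substructures, so every such pair is eventually handled.

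Let $\mathbf M = \bigcup_n \mathbf M_n$. Every finitely generated substructure of $\mathbf M$ has its generators in some $\mathbf M_n$, so it lies in $\mathbf M_n$. By HP it then belongs to $\mathcal K$, which together with the JEP steps gives $\age(\mathbf M) = \mathcal K$. The construction also gives the extension property: if $\mathbf A \subseteq \mathbf B$ are in $\mathcal K$ and $f \in \emb(\mathbf A, \mathbf M)$, then $f$ extends to an embedding of $\mathbf B$ into $\mathbf M$.

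The standard back-and-forth argument shows that a countable structure with the extension property is ultrahomogeneous. Enumerate $M$. Given an isomorphism between finitely generated substructures, alternately add the next element to the domain or to the range, take the hull (which is finitely generated), and extend using the extension property. The union of the resulting chain of isomorphisms is an automorphism extending the original map. The same back-and-forth between two countable ultrahomogeneous structures with age $\mathcal K$ gives uniqueness, because ultrahomogeneity together with the age condition implies the extension property. To see this, embed $\mathbf B$ somewhere in $\mathbf M$ and use ultrahomogeneity to move the copy of $\mathbf A$ onto $f(\mathbf A)$.

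For (2), HP holds because a finitely generated substructure of a member of the age embeds into $\mathbf M$. JEP holds because the hull of the union of two embedded copies is finitely generated. For AP, take embedded copies of $\mathbf B_1$ and $\mathbf B_2$ in $\mathbf M$. Use ultrahomogeneity to find an automorphism matching the two images of $\mathbf A$, then take the hull of the images of $\mathbf B_1$ and $\mathbf B_2$. Countability of $\age(\mathbf M)$ up to isomorphism follows from countability of $\mathbf M$.

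The main obstacle is the bookkeeping in (1). One must ensure that every extension problem over every finitely generated substructure of every stage is eventually addressed, even though new substructures keep appearing. The bijection $\omega \to \omega \times \omega$ is what guarantees this. A second point of care is that hulls, rather than finite sets, must be used throughout, since structures may contain functions.
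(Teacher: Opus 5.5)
Your proposal is correct. The paper gives no proof of this theorem and only cites Fraïssé; your argument is the classical one: a chain in $\mathcal K$ built by interleaved JEP/AP steps under bookkeeping, the resulting extension property, back-and-forth for ultrahomogeneity and uniqueness, and the converse read off from ultrahomogeneity. The details you leave implicit are routine: the bookkeeping bijection must satisfy $i \le n$ whenever stage $n$ is assigned $(i,j)$ (Cantor pairing does), and the uniqueness back-and-forth needs a starting isomorphism, e.g.\ an embedding of $\langle a_0\rangle_{\mathbf M}$ into the other structure, which exists by equality of ages.
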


\begin{remark}\label{remark:set-valued Fraisse}
    We also use the terminology of Definitions \ref{def:amalgamation class} and \ref{def:age, uh} for set-valued structures, with embeddings as in Definition \ref{def of hom and emb}.
    The correspondence in Theorem \ref{thm: Fraisse} also holds between \fr classes of finite set-valued structures and at most countable locally finite ultrahomogeneous set-valued structures.
\end{remark}

\begin{definition}[\fr limits]\label{def:Fraisse limit}
    Let $\mathcal K$ be a \fr class of structures or a \fr class of finite set-valued structures.
    The corresponding at most countable ultrahomogeneous set-valued structure is called the \fr limit of $\mathcal K$.
\end{definition}

\begin{definition}[Erdős--Rado partition arrow]\label{def:partition arrow}
    Let $2 \le r \in \omega$ and $\mathbf A, \mathbf B, \mathbf C$ be set-valued $\mathcal{L}$-structures.
    $\mathbf C \rightarrow (\mathbf B)^{\mathbf A}_r$ means that for any $\chi: \emb(\mathbf A, \mathbf C) \rightarrow r$, there is $\beta \in \emb(\mathbf B, \mathbf C)$ such that $|\chi(\beta \circ \emb(\mathbf A, \mathbf B))| \le 1$.
\end{definition}

\begin{definition}[Ramsey classes]\label{def:Ramsey property}
    Let $\mathcal{K}$ be a class of set-valued $\mathcal{L}$-structures.
    $\mathcal{K}$ is a {\em Ramsey class} if for every $2 \le r \in \omega$ and $\mathbf A, \mathbf B \in \mathcal{K}$, there is $\mathbf C \in \mathcal{K}$ such that $\mathbf C \rightarrow (\mathbf B)^{\mathbf A}_r$.
    We call such a $\mathbf C$ a {\em Ramsey witness} for $\mathbf A, \mathbf B \in \mathcal{K}$ (and $r$).
    If $\mc K$ is a Ramsey class, we also say that $\mathcal{K}$ has the {\em Ramsey property}.
\end{definition}

\fr classes are closely related to Ramsey classes.

\begin{Fact}[\cite{Nes05}, \cite{Nes89}]\label{fact:Ramsey implies Fraisse}
    Let $\mc K$ be a class of finite set-valued $\mc L$-structures.
    If $\mc K$ has the joint embedding property and the Ramsey property, then it has the amalgamation property.
\end{Fact}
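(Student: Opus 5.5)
We prove the contrapositive form directly: assuming $\mc K$ has JEP and the Ramsey property, we show AP. The plan is to use the Ramsey property to pick a single copy of $\mb A$ inside a Ramsey witness along which the two given embeddings can be realized simultaneously.

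Fix $\mb A, \mb B_1, \mb B_2 \in \mc K$ with $\alpha_1 \in \emb(\mb A, \mb B_1)$ and $\alpha_2 \in \emb(\mb A, \mb B_2)$. By JEP there is $\mb D \in \mc K$ together with embeddings $\delta_1 \in \emb(\mb B_1, \mb D)$ and $\delta_2 \in \emb(\mb B_2, \mb D)$. These need not agree on $\mb A$, so apply the Ramsey property with $r = 2$ to get $\mb C \in \mc K$ with $\mb C \to (\mb D)^{\mb A}_2$.

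Define $\chi : \emb(\mb A, \mb C) \to 2$ by $\chi(\gamma) = 0$ if $\gamma = \beta \circ \alpha_1$ for some $\beta \in \emb(\mb B_1, \mb C)$, and $\chi(\gamma) = 1$ otherwise. By the arrow there is $\delta \in \emb(\mb D, \mb C)$ such that $\chi$ is constant on $\delta \circ \emb(\mb A, \mb D)$. This set contains $\delta \circ \delta_1 \circ \alpha_1$, which has colour $0$ (witnessed by $\beta = \delta \circ \delta_1$). Hence every element of the set has colour $0$, in particular $\delta \circ \delta_2 \circ \alpha_2$. So there is $\beta_1 \in \emb(\mb B_1, \mb C)$ with $\beta_1 \circ \alpha_1 = \delta \circ \delta_2 \circ \alpha_2$. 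Setting $\beta_2 = \delta \circ \delta_2$ gives the required amalgam in $\mb C \in \mc K$.

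The only point needing care is that compositions of embeddings of set-valued structures (Definition \ref{def of hom and emb}) are again embeddings. This holds because injectivity, the relation biconditional, and the equality $f(F^{\mb A}(\bar a)) = F^{\mb B}(f(\bar a))$ are all preserved under composition. No step is a real obstacle; the key idea is the choice of the two-colouring.
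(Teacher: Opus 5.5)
Your proof is correct, and it is the standard two-colouring argument from the cited works of Ne\v{s}et\v{r}il; the paper quotes this Fact without proof and relies on exactly that argument by citation. One cosmetic point: your argument is a direct proof of AP, not a proof of the contrapositive, so the opening phrase should be dropped.
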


We recall the KPT correspondence because of its fundamental role in structural Ramsey theory.
Throughout, no background in topological dynamics is needed.
Given an $\mc L$-structure $\mb M$, we equip $\operatorname{Aut}(\mathbf M)$ with the topology of pointwise convergence on $M$ with the discrete topology.
A topological group is said to be {\em extremely amenable} if every continuous action of the group on a nonempty compact Hausdorff space has a fixed point.

We state the following form of the KPT correspondence;
see \cite[Theorem 4.8]{KPT2005} and \cite[Theorem 5.8]{HK26}.

\begin{theorem}[\cite{KPT2005}, Theorem 4.8]\label{thm:KPT}
    Let $\mathbf M$ be an at most countable locally finite ultrahomogeneous $\mc L$-structure.
    Then $\operatorname{Aut}(\mathbf M)$ is extremely amenable if and only if $\operatorname{age}(\mathbf M)$ is a Ramsey class.
\end{theorem}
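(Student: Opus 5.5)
The plan is to pass in both directions through the combinatorics of the coset spaces $G/G_{\mb A}$, where $G=\aut(\mb M)$ and $G_{\mb A}$ is the pointwise stabilizer of a finite substructure $\mb A\subseteq\mb M$. Since $\mb M$ is locally finite and ultrahomogeneous, the map $gG_{\mb A}\mapsto g|_{A}$ is a $G$-equivariant bijection $G/G_{\mb A}\to\emb(\mb A,\mb M)$. The sets $G_{\mb A}$, for $\mb A$ finite (equivalently, finitely generated), form a neighbourhood basis of the identity consisting of open subgroups. So colourings of $\emb(\mb A,\mb M)$ are exactly colourings of $G/G_{\mb A}$ for basic open subgroups, and $G$ acts on them by left translation.

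\emph{Ramsey $\Rightarrow$ extremely amenable.} First I would establish the finite Ramsey property for cosets. Take a finite $F\subseteq\emb(\mb A,\mb M)$, and let $\mb B$ be the hull of the union of the images of the elements of $F$; by local finiteness $\mb B$ is finite. Given $\chi:\emb(\mb A,\mb M)\to r$, choose a Ramsey witness $\mb C\in\age(\mb M)$ for $\mb A,\mb B,r$ and embed it into $\mb M$. Then some $\beta\in\emb(\mb B,\mb M)$ has $\beta\circ\emb(\mb A,\mb B)$ monochromatic. By ultrahomogeneity, $\beta$ extends to some $g\in G$, and $gF\subseteq\beta\circ\emb(\mb A,\mb B)$ is monochromatic.

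Next I would convert this into a fixed point. Let $G$ act continuously on a compact Hausdorff space $X$ and fix $x\in X$. It suffices to show that the closed sets $\{y\in X:\ d(hy,y)\in U \text{ for all } h\in E\}$ have the finite intersection property, where $E\subseteq G$ is finite and $U$ is an entourage of the unique uniformity of $X$; their intersection is then a fixed point. Given $E$ and $U$, pick a symmetric entourage $V$ with $V\circ V\circ V\subseteq U$. Continuity of the orbit map at $x$ gives a basic open subgroup $H=G_{\mb A}$ with $(hx,x)\in V$ for all $h\in H$. Cover $X$ by finitely many $V$-small sets $X_1,\dots,X_r$, and colour $gH$ by the least $i$ with $gx\in X_i$; this colour may depend on the representative $g$, so I would first fix a choice of representative for each coset, which is harmless up to the entourage $V$. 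Apply the coset Ramsey property to $F=\{H\}\cup\{h^{-1}H: h\in E\}$ to get $g$ with $gF$ monochromatic, and put $y=gx$. Then $hy$ and $y$ lie in a common $V$-small set up to $V$-errors, so $(hy,y)\in U$. The main obstacle is this bookkeeping: making the colouring well defined on cosets and composing the entourages correctly, including handling the fact that $g$ and $h$ act on different sides. I would carry it out in the standard non-archimedean form of Pestov's criterion.

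\emph{Extremely amenable $\Rightarrow$ Ramsey.} Suppose the Ramsey property fails for some $\mb A,\mb B\in\age(\mb M)$ and $r$. Write $\mb M=\bigcup_n\mb C_n$ as an increasing union of finite substructures. Each $\mb C_n$ admits a bad colouring, and restrictions of bad colourings are bad. Hence by K\"onig's lemma, or Tychonoff, there is $\chi:\emb(\mb A,\mb M)\to r$ such that no $\beta\circ\emb(\mb A,\mb B)$ is monochromatic. The orbit closure $X=\overline{G\chi}\subseteq r^{\emb(\mb A,\mb M)}$ is a compact $G$-flow, with continuous action because the stabilizers are open. Extreme amenability gives a fixed point $\chi'\in X$. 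Since $G$ acts transitively on $\emb(\mb A,\mb M)$ by ultrahomogeneity, $\chi'$ is constant. Being in the orbit closure, $\chi'$ agrees with some $g\cdot\chi$ on the finite set $\emb(\mb A,\mb B)$, where $\mb B$ is identified with a copy in $\mb M$. Then $\chi$ is constant on $g^{-1}\circ\emb(\mb A,\mb B)$, which is of the form $\beta\circ\emb(\mb A,\mb B)$ — a contradiction.
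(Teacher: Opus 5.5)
The paper gives no proof of this statement: it is quoted from Kechris--Pestov--Todor\v{c}evi\'c and used only through Corollary~\ref{cor:aut same Ramsey}. Your outline follows the standard KPT route. The direction from extreme amenability to the Ramsey property is correct as written, and so is your finite Ramsey property for cosets. The gap is in turning the coset property into a fixed point, and with the choices you wrote down that step fails; it is not just bookkeeping.

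Suppose $kH$ is coloured by the cell containing $s(kH)x$. Changing the representative $g$ to $gu$ with $u\in H$ replaces $gx$ by $gux$. Knowing $(ux,x)\in V$ gives no control over $(gux,gx)$, because the homeomorphisms $z\mapsto gz$ for $g\in G$ are not equicontinuous. Even with a well-defined colouring, monochromaticity of $gF=\{gH\}\cup\{gh^{-1}H:h\in E\}$ only relates points of the form $gux$ and $gh^{-1}u'x$. That says nothing about the pair $(hgx,gx)=(hy,y)$. Choosing $H$ by continuity of the orbit map at the single point $x$ cannot repair this, since the ambiguity from the choice of representatives occurs at arbitrary points of the orbit, not at $x$.

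Two ingredients are missing.

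\emph{Uniform continuity of the action.} Since $G\times X\to X$ is continuous and $X$ is compact, for every entourage $V$ there is an open subgroup $H=G_{\mb A}$ with $(uz,z)\in V$ for all $u\in H$ and \emph{all} $z\in X$.

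\emph{The correct side.} Colour $kH$ by the cell containing $s(kH)^{-1}x$ and put $y=g^{-1}x$. With your $F=\{H\}\cup\{h^{-1}H:h\in E\}$, one gets $s(gH)^{-1}x=u^{-1}y$ and $s(gh^{-1}H)^{-1}x=(u')^{-1}hy$ for some $u,u'\in H$. These two points lie in a common $V$-small cell. The uniform choice of $H$ gives $(u^{-1}y,y)\in V$ and $((u')^{-1}hy,hy)\in V$, hence $(y,hy)\in V\circ V\circ V\subseteq U$.

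Equivalently, you can keep the colouring by $s(Hk)x$ and $y=gx$, but then you must work with right cosets $Hk$ and right translates $Fg$; the coset Ramsey property transfers to that setting via $kH\mapsto Hk^{-1}$. Also take $U$ closed, so that the sets in your finite intersection argument are closed. With these repairs your outline becomes a complete proof.
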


\subsection{Locally finite subclasses}\label{subsec:LFS}

\begin{definition}[\cite{HN19}, Definition 2.1]\label{def:irreducible}\ 
    \begin{enumerate}
        \item Let $\mathbf A, \mathbf B_1$ and $\mathbf B_2$ be set-valued $\mathcal{L}$-structures, and $\alpha_i \in \emb(\mathbf A, \mathbf B_i)$ for each $i = 1, 2$.
        We may assume that $\mathbf A \subseteq \mathbf B_1, \mathbf B_2$ and $\alpha_1, \alpha_2$ are inclusion maps.
        Then a set-valued $\mathcal{L}$-structure $\mathbf C$ is the {\em free amalgam} of $\mathbf B_1$ and $\mathbf B_2$ over $\mathbf A$ if
        \begin{enumerate}
            \item $A = B_1 \cap B_2$,
            \item $C = B_1 \cup B_2$,
            \item $\mb B_1, \mb B_2 \subseteq \mb C$,
            \item for every relation symbol $R \in \mathcal{L}$, if $(c_0, \ldots, c_{n-1}) \in R^{\mathbf C}$, then $\{c_0$, $\ldots$, $c_{n-1}\} \subseteq B_1$ or $B_2$, and
            \item for any function symbol $F \in \mathcal{L}$, if $F^{\mathbf C}(c_0, \ldots, c_{n-1}) = D \ne \emptyset$, then $\{c_0, \ldots, c_{n-1}\} \cup D \subseteq B_1$ or $B_2$.
        \end{enumerate}
        
        \item A set-valued $\mathcal{L}$-structure $\mathbf A$ is {\em irreducible} if it is not a free amalgam of two of its proper substructures.
    \end{enumerate}
\end{definition}

Recall that we have allowed empty structures.
If a set-valued $\mc L$-structure $\mb A$ has no proper substructure, then $\mathbf A$ is trivially irreducible.
If $\mc L$ has no function symbol, then $\mb A$ is irreducible if and only if for every $a \ne b \in A$, there are $\bar{a}$ and $R \in \mc L$ such that $a, b \in \bar{a}$ and $\mb A \models R(\bar{a})$.
Otherwise, the condition that every substructure is closed under the interpretation of function symbols is important.
For example, if $\mc L = \{R, F\}$, where $R$ is a binary relation symbol and $F$ is a unary function symbol, and $\mb A$ is a set-valued $\mc L$-structure such that $A = \{a,b,c,d\}$, $R^{\mb A} = \{(b, c)\}$ and $F^{\mb A} = \{(b, a), (c, d)\}$, then $\mb A$ is irreducible.

\begin{definition}[\cite{HN19}, Definition 2.3 and \cite{HK26}, Definition 6.13]\label{def:h-e}
    Let $\mathbf A$, $\mathbf B$ be set-valued $\mathcal{L}$-structures and $f: A \rightarrow B$.
    $f$ is called a {\em homomorphism-embedding} if $f$ is a homomorphism and $f \restriction A'$ is an embedding for every irreducible substructure $\mathbf A'$ of $\mathbf A$.
    If there is a homomorphism-embedding from $\mathbf A$ to $\mathbf B$, $\mathbf B$ is called a {\em completion} of $\mathbf A$.
\end{definition}

\begin{observation}\label{obs:LFS completion}
    If $2 \le r \in \omega$, $\mb A, \mb B$ are irreducible, $\mb C \rightarrow (\mb B)^{\mb A}_r$ and $\mb C'$ is a completion of $\mb C$, then $\mb C' \rightarrow (\mb B)^{\mb A}_r$.
\end{observation}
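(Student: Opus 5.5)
Let $f:\mb C\to\mb C'$ be a homomorphism-embedding witnessing that $\mb C'$ is a completion of $\mb C$. Given a coloring $\chi':\emb(\mb A,\mb C')\to r$, I will pull it back to a coloring of $\emb(\mb A,\mb C)$, use $\mb C\to(\mb B)^{\mb A}_r$ to get a monochromatic copy of $\mb B$ in $\mb C$, and push that copy forward along $f$. Two checks are needed: composing with $f$ sends embeddings of $\mb A$ to embeddings of $\mb A$, and sends embeddings of $\mb B$ to embeddings of $\mb B$.

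The key lemma is: if $\mb D$ is irreducible and $\gamma\in\emb(\mb D,\mb C)$, then $f\circ\gamma\in\emb(\mb D,\mb C')$. Since $\gamma$ is an embedding, $\gamma(\mb D)$ is a substructure of $\mb C$ isomorphic to $\mb D$. Irreducibility is preserved under isomorphism, because free amalgam decompositions transfer along isomorphisms. So $\gamma(\mb D)$ is an irreducible substructure of $\mb C$. By Definition \ref{def:h-e}, $f\restriction\gamma(D)$ is then an embedding, and the composite $f\circ\gamma$ of two embeddings is an embedding.

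With this lemma, define $\chi:\emb(\mb A,\mb C)\to r$ by $\chi(\alpha)=\chi'(f\circ\alpha)$. This is well defined because $\mb A$ is irreducible, so $f\circ\alpha\in\emb(\mb A,\mb C')$. By hypothesis there is $\beta\in\emb(\mb B,\mb C)$ with $|\chi(\beta\circ\emb(\mb A,\mb B))|\le1$. Since $\mb B$ is irreducible, $\beta'=f\circ\beta\in\emb(\mb B,\mb C')$. For every $\alpha\in\emb(\mb A,\mb B)$,
$$\chi'(\beta'\circ\alpha)=\chi'(f\circ(\beta\circ\alpha))=\chi(\beta\circ\alpha).$$
Hence $\chi'(\beta'\circ\emb(\mb A,\mb B))=\chi(\beta\circ\emb(\mb A,\mb B))$ has size at most $1$.

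The only step needing real care is the lemma, specifically that the image of an irreducible structure under an embedding is an irreducible substructure. For set-valued structures this relies on embeddings preserving function values exactly (Definition \ref{def of hom and emb}(2)(b)). That ensures $\gamma(\mb D)$ is closed under the function symbols, so it is genuinely a substructure, and that free amalgam decompositions correspond bijectively under $\gamma$. Once this is checked, the rest is the routine pullback argument above.
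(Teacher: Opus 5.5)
Your proposal is correct and follows essentially the same route as the paper: pull back $\chi'$ along $f$, apply $\mb C \rightarrow (\mb B)^{\mb A}_r$ to get $\beta$, and push $\beta$ forward to $f\circ\beta$. Your lemma that the image of an irreducible structure under an embedding is an irreducible substructure, so that $f\circ\gamma$ is an embedding, spells out the step the paper states in one line.
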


\begin{proof}
    Let $f$ be a homomorphism-embedding from $\mb C$ to $\mb C'$.
    Let $\chi': \emb(\mb A, \mb C') \rightarrow r$ and define $\chi: \emb(\mb A, \mb C) \rightarrow r$ by $\chi(\alpha) = \chi'(f \circ \alpha)$.
    Note that $f \circ \alpha \in \emb(\mb A, \mb C')$ since $f$ is a homomorphism-embedding and $\mb A$ is irreducible.
    By $\mb C \rightarrow (\mb B)^{\mb A}_r$, there is $\beta \in \emb(\mb B, \mb C)$ such that $|\chi(\beta \circ \emb(\mb A, \mb B))| \le 1$.
    Now observe that $f \circ \beta \in \emb(\mb B, \mb C')$ and for every $\alpha \in \emb(\mb A, \mb B)$, $\chi'(f \circ \beta \circ \alpha) = \chi(\beta \circ \alpha) \in \chi(\beta \circ \emb(\mb A, \mb B))$.
\end{proof}

By letting the forbidden family $\mc F$ be empty in \cite[Theorem 6.5]{HK26} we get the following generalized unrestricted Ne\v{s}et\v{r}il--R\"odl theorem.

\begin{theorem}[\cite{HK26}, Theorem 6.5]\label{thm:NR theorem}
    Let $\mathcal{L}$ be a language possibly with both function and relation symbols, and assume that $<$ is a binary relation symbol in $\mathcal{L}$.
    Then the class of all finite $<$-linearly ordered set-valued $\mc L$-structures is a Ramsey class.
\end{theorem}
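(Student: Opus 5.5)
The plan is to follow the classical Ne\v{s}et\v{r}il--R\"odl partite construction and check that each step survives the passage to set-valued structures with function symbols. Fix $\mb A,\mb B$ and $r$, with $|A|=a$ and $|B|=b$. The linear order $<$ matters in two ways. An embedding of a $<$-linearly ordered structure is determined by its image, so we may count copies rather than embeddings. It also lets us index everything by subsets of a ground linear order.

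\emph{Step 1 (base).} By Ramsey's theorem, choose $n$ with $n\to(b)^{a}_r$ for $a$-subsets. Let $\mb P_0$ be the ordered set-valued structure on a set partitioned into $n$ ordered \emph{parts} (a projection $\pi\colon P_0\to n$). It is the free amalgam of copies of $\mb B$, one for each $b$-subset $S\subseteq n$, placed transversally over $S$ in order-preserving fashion.

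Free amalgams of set-valued structures always exist, because function values are simply unions of the value sets in the pieces. Each copy of $\mb B$ remains a substructure, since its hull is closed within its own piece.

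\emph{Step 2 (partite lemma).} Enumerate the $a$-subsets $S_1,\dots,S_m$ of $n$. At stage $k$ we have an $n$-partite picture $\mb P_{k-1}$. Let $\mb D$ be the $a$-partite structure $\pi^{-1}(S_k)$, and let $\mb A^*$ be $\mb A$ viewed as an $a$-partite structure with one vertex per part.

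We need an $a$-partite $\mb E$ such that every $r$-colouring of transversal copies of $\mb A^*$ in $\mb E$ has a monochromatic transversal copy of $\mb D$. To get it, take the Hales--Jewett cube. Let $N=\mathrm{HJ}(|\emb(\mb A^*,\mb D)|,r)$. Put on part $i$ the vertices $(D_i)^N$. For every combinatorial line, i.e.\ every word over $\emb(\mb A^*,\mb D)$, place the copy of $\mb A$ determined coordinatewise. Then $\mb E$ is the free amalgam (as set-valued structures) of these copies of $\mb A$.

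One must check two things:
\begin{itemize}
\item Transversal copies of $\mb A$ in $\mb E$ come only from the placed copies. Substructures are hull-closed, and in a free amalgam function values do not mix pieces, so a transversal copy of $\mb A$ must lie inside one placed copy. This is where the set-valued convention and closure of substructures under $F$ are essential.
\item Hales--Jewett yields a monochromatic copy of $\mb D$ along a combinatorial subspace.
\end{itemize}

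\emph{Step 3 (amalgamation).} Form $\mb P_k$ by taking, for each copy of $\mb D$ in $\mb E$, a copy of $\mb P_{k-1}$ extending it, and freely amalgamating these copies along $\mb E$. This stays $n$-partite. Order $\mb P_k$ lexicographically: first by part, then inside each part by any linear order compatible with all the amalgamated copies. Such an order exists, e.g.\ take the order induced from $\mb E$ first, then order the new vertices in each fibre copy by copy index and then by their order in $\mb P_{k-1}$.

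Then $\mb C=\mb P_m$ works by the standard backward argument. Given $\chi$, choose successively copies $\mb P_{m-1},\dots,\mb P_0$ inside $\mb P_m$ such that on the copy of $\mb P_{k-1}$ the colour of a transversal copy of $\mb A$ over $S_k$ depends only on $S_k$. Inside the final $\mb P_0$ every copy of $\mb A$ in a base copy of $\mb B$ is transversal, so its colour depends only on its projection. Ramsey's choice of $n$ then gives a $b$-set $S$ on which the colour is constant, and the copy of $\mb B$ over $S$ is monochromatic.

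I expect the main obstacle to be Step 2's first check, together with the order bookkeeping in Step 3. Every substructure of a free amalgam that is isomorphic to $\mb A$ and transversal must lie in a single piece. This is needed both so that the Hales--Jewett colouring is well defined, and so that embeddings of $\mb A$ or $\mb B$ into intermediate pictures remain embeddings, i.e.\ function values are preserved with equality, not merely inclusion.

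I would handle this through the relational transform of Remark~\ref{remark:formula and satisfaction}. Replace each $F$ by an $(n+1)$-ary relation and require substructures to be closed. Then a substructure's hull, equal to the union over pieces, cannot create new tuples across pieces. The argument thus reduces to the usual relational partite lemma with the extra closure condition, which is the form used in \cite[Appendix A.4]{HK26} and \cite{HN19}.
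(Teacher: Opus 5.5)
Your proposal has a genuine gap at the function symbols, which are the point of this statement as the paper uses it. The paper does not prove the statement itself; it imports it from \cite[Theorem 6.5]{HK26} with $\mathcal F=\emptyset$. For purely relational $\mathcal L$ your outline is essentially the classical Ne\v{s}et\v{r}il--R\"odl argument. (In your $\mathbf E$ a line gives an embedding of $\mathbf D$ only if every tuple of $\mathbf D$ lies in a transversal copy of $\mathbf A$, but for relations the gluing in Step~3 restores the missing tuples harmlessly.)

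\textbf{Where it breaks.} Your shadow is the bare set $n$, so $\pi$ carries no information about $F$, and $\pi^{-1}(S_k)$ need not be closed. Take $\mathcal L=\{<,F\}$ with $F$ unary, $\mathbf A=\{a_0<a_1\}$ with $F^{\mathbf A}=\emptyset$, and $\mathbf B=\{b_0<b_1<b_2\}$ with $F^{\mathbf B}(b_0)=\{b_2\}$ and no other values.
\begin{enumerate}
\item For $S_k=\{s_0<s_1\}$ and $U=\{s_0<s_1<u\}$, the vertex $x=b_0^U\in\pi^{-1}(S_k)$ has $F(x)=\{b_2^U\}$ in part $u\notin S_k$. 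So your $\mathbf D$ is not a substructure of $\mathbf P_0$.
\item Because you order $\mathbf P_0$ by parts, there are transversal embeddings $e\neq e'$ of $\mathbf A$ with $e(a_0)=e'(a_0)$. For example, $e=(b_1^T,b_2^T)$ with $T=\{t<s_0<s_1\}$, and $e'=(b_1^T,b_2^{T'})$ with $T'\neq T$ and $\max T'=s_1$.
\item Two lines with the same active coordinates, differing only in one inactive coordinate where they take the values $e$ and $e'$, send $x$ to the same vertex of $\mathbf E$.
\item In Step~3 each of the two copies of $\mathbf P_0$ glued along these lines contributes its own new copy of $b_2^U$. So in $\mathbf P_1$ this vertex has two $F$-values.
\end{enumerate}
Hence these copies of $\mathbf P_0$, and the copies of $\mathbf B^U$ inside them, are not substructures of $\mathbf P_1$. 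The invariant your backward argument relies on (every glued copy of $\mathbf P_{k-1}$ is a substructure) fails. Even when values stay inside $\pi^{-1}(S_k)$, overlapping line copies are only guaranteed to agree if $\pi(F(\bar x))\subseteq F^{\mathbf A}(\pi\bar x)$. Only then do embeddings that agree on $\pi\bar x$ also agree on the values, and a bare shadow cannot ensure this.

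\textbf{Why your remedy does not address it.} The claim that every transversal copy of $\mathbf A$ in $\mathbf E$ lies in one placed copy is false in general. It is also unnecessary: colour the words through the placed copies, and the copies of $\mathbf A$ inside the copy of $\mathbf D$ on a line are exactly the placed copies of that line's words. Passing to the relational transform does not help either. The relational partite construction over the same bare shadow duplicates $R_F$-tuples in exactly the same way, so it yields copies of $\mathbf B$ that are not closed.

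\textbf{What is missing} is closure control. The pictures must be partite over a shadow that itself carries the function symbols, with projections that are homomorphism-embeddings. Each stage's $\mathbf D$ must be the preimage of a \emph{closed} copy of $\mathbf A$. Then $\mathbf D$ is closed, its function values project into those of $\mathbf A$, and the line copies glue consistently. According to the paper, the cited proof handles function symbols through the relational transform with closed substructures inside a recursive partite construction built on the relational Ne\v{s}et\v{r}il--R\"odl theorem (Remark~\ref{remark:formula and satisfaction}, \cite[Appendix A.4]{HK26}). Your proposal has no substitute for this.
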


Let $\mb A, \mb B \in \mc K \subseteq \mc R$, where $\mc R$ is the class of all finite $<$-linearly ordered set-valued $\mc L$-structures.
Theorem \ref{thm:NR theorem} guarantees that $\mc R$ can be used as an `ambient' Ramsey class.
By the iterated applications of the induced partite construction starting with $\mb A$ and $\mb B$ (see Appendix of \cite{HK26}), we can obtain $\mb C$ satisfying the conditions of Definition \ref{def:LFS}(1)--(3) if $\mc K$ has HP and AP, where $n$ is the number of iterations.
As a consequence, $\mb C \rightarrow (\mb B)^{\mb A}_r$, but $\mb C$ is not necessarily in $\mc R (\supseteq \mc K)$.
However, since $\mb A, \mb B \in \mc K$ are irreducible (as they are $<$-linearly ordered), any completion $\mb C^*$ of $\mb C$ in $\mc K$ satisfies $\mb C^* \rightarrow (\mb B)^{\mb A}_r$ by Observation \ref{obs:LFS completion}.

\begin{definition}[\cite{HK26}, Definition 6.14]\label{def:LFS}
    Let $\mathcal{L}$ be any language, $\mathcal{R}$ be a class of finite set-valued $\mathcal{L}$-structures, and $\mathcal{K} \subseteq \mathcal{R}$.
    We say that $\mathcal{K}$ is a {\em locally finite subclass of $\mathcal{R}$} if for every $\mathbf B \in \mathcal{K}$ and every $\mathbf C_0 \in \mathcal{R}$, there is $n = n(\mathbf B, \mathbf C_0) \in \omega$ such that if a finite set-valued $\mc L$-structure $\mathbf C$ satisfies the following three conditions, then $\mathbf C$ has a completion in $\mathcal{K}$:
    \begin{enumerate}
        \item $\mathbf C_0$ is a completion of $\mathbf C$.
        \item Every irreducible substructure of $\mathbf C$ extends to $\mathbf B$ in $\mathbf C$.
        That is, for every irreducible substructure $\mathbf C'$ of $\mathbf C$, there is an embedding $f_{\mathbf C'}$ of $\mathbf B$ in $\mathbf C$ such that $\mathbf C' \subseteq f_{\mb C'}(\mathbf B)$.
        \item Every substructure of $\mathbf C$ of size at most $n$ has a completion in $\mathcal{K}$.
    \end{enumerate}
\end{definition}

\begin{theorem}[\cite{HN19} and \cite{HK26}, Theorem 6.15]\label{thm:LFS Ramsey transfer}
    Let $\mathcal{L}$ be a language, and let $\mathcal{R}$ be a Ramsey class of finite set-valued $\mathcal{L}$-structures.
    If
    \begin{itemize}
        \item $\mathcal{K}$ is a locally finite subclass of $\mathcal{R}$,
        \item $\mathcal{K}$ has HP and AP, and
        \item every $\mathbf A \in \mathcal{K}$ is irreducible,
    \end{itemize}
    then $\mathcal{K}$ is a Ramsey class.
\end{theorem}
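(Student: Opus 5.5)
We prove Theorem \ref{thm:LFS Ramsey transfer} by the iterated induced partite construction, using $\mathcal R$ as the ambient Ramsey class and Definition \ref{def:LFS} to obtain the final completion. Fix $\mathbf A, \mathbf B \in \mathcal K$ and $r \ge 2$. Since $\mathcal R$ is Ramsey, choose $\mathbf C_0 \in \mathcal R$ with $\mathbf C_0 \to (\mathbf B)^{\mathbf A}_r$, and let $n = n(\mathbf B, \mathbf C_0)$ be the number given by local finiteness. We build a sequence of finite set-valued structures $\mathbf P_0, \mathbf P_1, \ldots, \mathbf P_n$, each of which is $\mathbf C_0$-partite: it comes with a homomorphism-embedding (projection) onto $\mathbf C_0$, so $\mathbf C_0$ is a completion of it. Each $\mathbf P_k$ will satisfy $\mathbf P_k \to (\mathbf B)^{\mathbf A}_r$. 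Start with $\mathbf P_0$ the disjoint union of copies of $\mathbf B$, one for each $\beta \in \emb(\mathbf B, \mathbf C_0)$, glued along the images in $\mathbf C_0$ where the structure is forced. This is the standard picture, and in it every irreducible substructure lies in a copy of $\mathbf B$.

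The step from $\mathbf P_k$ to $\mathbf P_{k+1}$ is one round of the partite construction. Enumerate the copies $\alpha_1, \ldots, \alpha_m$ of $\mathbf A$ in $\mathbf C_0$. Let $\mathbf P_k^{(i)}$ be the restriction of $\mathbf P_k$ to the parts over $\alpha_i(\mathbf A)$, and choose $\mathbf D_i \in \mathcal R$ with $\mathbf D_i \to (\mathbf A)^{\mathbf A}$. This follows from the partite lemma, obtained from the Hales--Jewett theorem as in \cite[Appendix]{HK26}. Amalgamate copies of $\mathbf P_k$ freely along every copy of $\mathbf P_k^{(i)}$ inside the partite Ramsey object. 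Iterating over $i = 1, \ldots, m$, the usual pigeonhole argument gives a copy of $\mathbf P_k$ on which the colour of each $\mathbf A$ depends only on its projection to $\mathbf C_0$. Combined with $\mathbf C_0 \to (\mathbf B)^{\mathbf A}_r$, this yields the arrow. We then need three invariants. First, the projection stays a homomorphism-embedding, which holds because the amalgamations are free and the relevant pieces are irreducible. Second, every irreducible substructure still extends to a copy of $\mathbf B$: since free amalgams split no irreducible set, each irreducible substructure lies inside one copy of $\mathbf P_k$, and by induction inside a copy of $\mathbf B$. Third, and this is the point of iterating, each round increases the ``girth'': any substructure of $\mathbf P_{k+1}$ on at most $k+1$ vertices that is not contained in one copy of $\mathbf P_k$ decomposes as a tree of free amalgams of smaller pieces.

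We then use the third invariant by induction to show that every substructure of $\mathbf P_n$ of size at most $n$ has a completion in $\mathcal K$. Such a substructure is built from pieces lying in copies of $\mathbf B$, and these pieces are in $\mathcal K$ by HP. The pieces are glued along a tree of free-amalgamation steps over substructures, which are also in $\mathcal K$ by HP. At each gluing step, AP of $\mathcal K$ supplies an amalgam, and a free amalgam maps homomorphism-embeddingly onto any amalgam. Thus $\mathbf C = \mathbf P_n$ satisfies conditions (1)--(3) of Definition \ref{def:LFS}, so it has a completion $\mathbf C^* \in \mathcal K$. Because $\mathbf A$ and $\mathbf B$ are irreducible, Observation \ref{obs:LFS completion} gives $\mathbf C^* \to (\mathbf B)^{\mathbf A}_r$.

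The main obstacle is the girth invariant: each round must separate every ``bad'' small configuration that is not a tree of free amalgams, without destroying the Ramsey arrow. The plan is to follow \cite{HN19}, arguing that a substructure of size $\le k+1$ meeting several copies of $\mathbf P_k$ inherits, from the freeness of the amalgamation in the partite step, a decomposition into proper substructures sharing their intersections with the base. Some care is needed with the function symbols, since substructures must be closed under $F$. The irreducibility hypothesis guarantees that no relation or function value spans two amalgamated copies. By contrast, the Ramsey part of each round is routine.
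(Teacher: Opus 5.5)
Your outer frame matches what the paper indicates. The paper does not prove this theorem: it cites \cite{HN19} and \cite[Theorem 6.15]{HK26}, and only remarks that $n$ rounds of the induced partite construction, a completion in $\mathcal K$, and Observation~\ref{obs:LFS completion} give the result. But the iteration you describe cannot produce condition~(3) of Definition~\ref{def:LFS}.

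In your round $k+1$ the initial picture is $\mathbf P_k$ and the base is still $\mathbf C_0$. Every step of a partite construction only amalgamates copies of the current picture, so $\mathbf P_k$ embeds into $\mathbf P_{k+1}$, and hence $\mathbf P_1$ embeds into $\mathbf P_n$. Any substructure of $\mathbf P_1$ of size at most $n$ with no completion in $\mathcal K$ therefore survives as a substructure of $\mathbf P_n$. Later rounds can add such configurations but never remove them, so your scheme works only if $\mathbf P_1$ already satisfies (3), which is exactly what one cannot expect in general and the reason for iterating.

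The same defect shows in your third invariant. It speaks only about substructures of $\mathbf P_{k+1}$ not contained in one copy of $\mathbf P_k$. For a $(k+1)$-element substructure inside one copy of $\mathbf P_k$ your induction has no input, since you only control sizes $\le k$ there. The previous stage has to enter in a way that does not copy its small configurations verbatim into the next stage. One must then actually prove the key lemma: after $k$ stages, every substructure on at most $k$ vertices has a homomorphism-embedding into a tree of free amalgams of copies of $\mathbf B$. That lemma is the heart of the argument in \cite{HN19}, and your proposal states it and defers its proof.

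The gluing step is also not justified as written. In a partite construction the copies are amalgamated over subsets of the partite-lemma outputs. These are $\mathbf A$-partite and in general not contained in a single copy of $\mathbf B$, so HP does not put the overlaps into $\mathcal K$. They may even be reducible, in which case completions of two pieces can complete the overlap differently and AP cannot be applied. Moreover, the overlap pattern through a partite-lemma output need not be a tree.

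The AP step works only when the overlaps lie in copies of $\mathbf B$. Then they belong to $\mathcal K$ and are irreducible by hypothesis, so the homomorphism-embeddings of the pieces restrict to embeddings on them and AP glues the completions. That is the real role of the hypothesis that every member of $\mathcal K$ is irreducible, beyond its use in Observation~\ref{obs:LFS completion}. The property you attribute to it, that no relation or function value spans two amalgamated copies, comes from the freeness of the amalgamation.

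Two smaller slips:
\begin{itemize}
\item $\mathbf P_0$ does not satisfy $\mathbf P_0\to(\mathbf B)^{\mathbf A}_r$.
\item The partite lemma should give an $\mathbf A$-partite $\mathbf D_i\to(\mathbf Q_i)^{\mathbf A}_r$, where $\mathbf Q_i$ is the restriction of the current picture. It does not give some $\mathbf D_i\in\mathcal R$ with $\mathbf D_i\to(\mathbf A)^{\mathbf A}$.
\end{itemize}
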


Combining the preceding results gives the following constructive form of the locally finite subclass theorem.

\begin{corollary}\label{cor:LFS partite construction}
    Let $\mathcal{L}$ be any language with a binary relation symbol $<$ and $\mathcal{R}$ be the class of finite $<$-linearly ordered set-valued $\mc L$-structures.
    If
    \begin{itemize}
        \item $\mathcal{K}$ is a locally finite subclass of $\mathcal{R}$, and
        \item $\mathcal{K}$ has HP and AP,
    \end{itemize}
    then $\mathcal{K}$ is a Ramsey class.
    Furthermore, given $\mb A, \mb B \in \mc K$ and $2 \le r \in \omega$, a Ramsey witness $\mb C \in \mc K$ can be obtained by the iterated partite construction followed by completions in $\mc K$.
\end{corollary}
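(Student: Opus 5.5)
The plan is to reduce to Theorem \ref{thm:LFS Ramsey transfer}, with $\mathcal R$ the class of all finite $<$-linearly ordered set-valued $\mathcal L$-structures. Three hypotheses must be checked: $\mathcal R$ is Ramsey, $\mathcal K$ is a locally finite subclass with HP and AP, and every member of $\mathcal K$ is irreducible. The first is Theorem \ref{thm:NR theorem}, and the second is assumed. So the only new work for the first claim is irreducibility, which the statement does not list as a hypothesis.

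First I would show that every $\mathbf A \in \mathcal R$, and hence every $\mathbf A \in \mathcal K$, is irreducible. Suppose $\mathbf A$ were a free amalgam of proper substructures $\mathbf B_1, \mathbf B_2$ over $\mathbf B_1\cap\mathbf B_2$. Since both are proper and $A = B_1\cup B_2$, we can pick $a \in B_1\setminus B_2$ and $b\in B_2\setminus B_1$. Because $<^{\mathbf A}$ is a linear order and $a\neq b$, either $(a,b)$ or $(b,a)$ lies in $<^{\mathbf A}$. Condition (d) of Definition \ref{def:irreducible} then forces $\{a,b\}\subseteq B_1$ or $\{a,b\}\subseteq B_2$, a contradiction. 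Degenerate cases, such as $\mathbf A$ with no proper substructure or the empty structure, are irreducible trivially. Theorem \ref{thm:LFS Ramsey transfer} now gives that $\mathcal K$ is a Ramsey class.

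For the constructive part I would follow the remark before Definition \ref{def:LFS}. Fix $\mathbf A,\mathbf B\in\mathcal K$ and $r$, and take $\mathbf C_0\in\mathcal R$ with $\mathbf C_0\to(\mathbf B)^{\mathbf A}_r$, obtained from the partite construction proof of Theorem \ref{thm:NR theorem}. Let $n=n(\mathbf B,\mathbf C_0)$ be given by local finiteness. Then iterate the induced partite construction $n$ times, starting from the ``picture'' of $\mathbf C_0$ and amalgamating copies of $\mathbf B$ using AP in $\mathcal K$ (as in the Appendix of \cite{HK26}). This produces a finite set-valued structure $\mathbf C$ such that:
\begin{enumerate}
\item $\mathbf C\to(\mathbf B)^{\mathbf A}_r$;
\item $\mathbf C_0$ is a completion of $\mathbf C$, via the projection;
\item every irreducible substructure of $\mathbf C$ extends to a copy of $\mathbf B$;
\item every substructure of size at most $n$ has a completion in $\mathcal K$.
\end{enumerate}
Property (4) is the reason for iterating $n$ times: each iteration, using HP and AP in $\mathcal K$, raises by one the size of substructures guaranteed to be completable in $\mathcal K$. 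Definition \ref{def:LFS} then yields a completion $\mathbf C^*\in\mathcal K$ of $\mathbf C$. By the irreducibility shown above, Observation \ref{obs:LFS completion} gives $\mathbf C^*\to(\mathbf B)^{\mathbf A}_r$.

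The main obstacle is verifying that the iterated partite construction really delivers all of properties (1)--(4) simultaneously. Property (4) is the delicate one, since it requires tracking, through each partite step, that small substructures stay inside amalgams of copies of $\mathbf B$ that can be completed in $\mathcal K$. I would not reprove this. Instead I would cite the proof of \cite[Theorem 6.15]{HK26} and \cite{HN19}, noting that the only inputs used there are HP and AP of $\mathcal K$, irreducibility of members of $\mathcal K$, and the Ramsey property of $\mathcal R$ realized by the partite construction. All of these have been established or assumed above.
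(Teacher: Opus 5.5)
Your proposal is correct and follows the paper's own route. You prove irreducibility from the linear order and feed Theorem \ref{thm:NR theorem} into Theorem \ref{thm:LFS Ramsey transfer}; for the constructive part you build $\mathbf C$ with Definition \ref{def:LFS}(1)--(3) by the iterated partite construction, complete it in $\mathcal K$, and transfer the arrow by Observation \ref{obs:LFS completion}, exactly as in the paragraph before Definition \ref{def:LFS}.

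One wording point: the partite steps glue copies of the current picture by free amalgamation, not by AP in $\mathcal K$; HP and AP of $\mathcal K$ are used only to show that small substructures stay completable, as your later sentence says.
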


\section{Ramsey transfer via automorphism groups}\label{sec:aut transfer}

\begin{remark}\label{remark:def aut same}
    Let $\mathbf M$ be an $\mathcal{L}$-structure and $\mathbf M'$ be an $\mathcal{L}'$-structure, where $\mathcal{L} \subseteq \mathcal{L}'$ and $\mb M = \mb M' |_{\mc L}$.
    If the interpretation of every relation and function symbol in $\mathcal{L}' \setminus \mathcal{L}$ in $\mathbf M'$ is definable in $\mb M$ (Definition \ref{def:definable}), then $\operatorname{Aut}(\mathbf M) = \operatorname{Aut}(\mathbf M')$.
\end{remark}

\begin{proof}
    It is clear that any automorphism of $\mathbf M'$ is also an automorphism of $\mathbf M$.
    For the converse, every automorphism of $\mb M$ preserves all relations and functions definable in $\mb M$, hence preserves the interpretation of every additional symbol.
\end{proof}

By Theorem \ref{thm:KPT} and Remark \ref{remark:def aut same} we have:

\begin{corollary}\label{cor:aut same Ramsey}
    For $i = 1, 2$, let $\mb M_i$ be an at most countable locally finite ultrahomogeneous $\mathcal{L}_i$-structure.
    Assume that $\mathcal{L}_1 \subseteq \mathcal{L}_2$, and $\mathbf M_2 |_{\mathcal{L}_1} = \mathbf M_1$.
    If the interpretation of every symbol in $\mathcal{L}_2 \setminus \mathcal{L}_1$ in $\mathbf M_2$ is definable in $\mathbf M_1$, then $\age(\mathbf M_1)$ is a Ramsey class if and only if $\age(\mathbf M_2)$ is a Ramsey class.
\end{corollary}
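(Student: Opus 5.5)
The statement is Corollary \ref{cor:aut same Ramsey}, which follows by chaining Remark \ref{remark:def aut same} with the KPT correspondence, Theorem \ref{thm:KPT}. The plan is to show that the two automorphism groups coincide, not merely as abstract groups but as topological groups. Extreme amenability then holds for one exactly when it holds for the other, and Theorem \ref{thm:KPT} turns that into the equivalence of the Ramsey properties of the two ages.

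First step: apply Remark \ref{remark:def aut same} with $\mathbf M = \mathbf M_1$ and $\mathbf M' = \mathbf M_2$. The hypotheses are exactly those of the remark: $\mathcal L_1 \subseteq \mathcal L_2$, $\mathbf M_2|_{\mathcal L_1} = \mathbf M_1$, and every new symbol is interpreted by a relation or function definable in $\mathbf M_1$. So $\aut(\mathbf M_1) = \aut(\mathbf M_2)$ as sets of permutations of the common universe $M_1 = M_2$.

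Second step: check that this equality is also a homeomorphism. Both groups carry the topology of pointwise convergence on the same set with the discrete topology. Since they are the same subgroup of $\mathrm{Sym}(M_1)$ with the induced topology, the topologies agree. Consequently $\aut(\mathbf M_1)$ is extremely amenable if and only if $\aut(\mathbf M_2)$ is.

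Third step: apply Theorem \ref{thm:KPT} to each $\mathbf M_i$ separately. This is legitimate because each $\mathbf M_i$ is assumed to be an at most countable, locally finite, ultrahomogeneous $\mathcal L_i$-structure. The theorem gives that $\aut(\mathbf M_i)$ is extremely amenable if and only if $\age(\mathbf M_i)$ is Ramsey. Combining this with the second step yields the corollary.

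There is no real obstacle here; the only points needing care are these:
\begin{itemize}
\item Definability is needed only in the direction showing that automorphisms of $\mathbf M_1$ preserve the new symbols, and the remark already handles this.
\item Local finiteness and ultrahomogeneity must be assumed for both structures, since neither is automatically inherited from the other. The corollary does assume both, so Theorem \ref{thm:KPT} applies on each side.
\end{itemize}
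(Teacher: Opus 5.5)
Your proof is correct and is the paper's argument: the paper derives the corollary in one line from Remark \ref{remark:def aut same}, which gives equality of the automorphism groups, together with Theorem \ref{thm:KPT} applied to each $\mathbf M_i$. The paper leaves implicit your extra check that the two groups carry the same topology of pointwise convergence, and you handle it correctly.
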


Under the assumptions of Corollary \ref{cor:aut same Ramsey}, we may have $\{\mb A|_{\mc L_1}: \mb A \in \age(\mb M_2)\} \subsetneq \age(\mb M_1)$, since $\mc L_1$-structures need not be closed under the interpretation of function symbols in $\mc L_2 \setminus \mc L_1$.
In particular, this occurs for the structures $\mathbf S_{\mathbb Q}$ and $\mathbf C_{\mathbb Q}$ introduced in Section \ref{sec:Fraisse};
see Remark \ref{remark:SQ CQ}(4).
The following proposition transfers individual Ramsey witnesses by taking generated substructures, and its proof does not use the KPT correspondence.

\begin{proposition}\label{prop:Ramsey witness transfer}
    For $i = 1, 2$, let $\mb M_i$ be an at most countable locally finite ultrahomogeneous $\mathcal{L}_i$-structure.
    Assume that $\age(\mathbf M_1)$ has the Ramsey property, $M_1 = M_2$, and $\aut(\mathbf M_1) = \aut(\mathbf M_2)$.
    Then for any $2 \le r \in \omega$ and $\mathbf A_2, \mathbf B_2 \in \age(\mathbf M_2)$, we have $\mathbf C_2 \rightarrow (\mathbf B_2)^{\mathbf A_2}_r$, where
    \begin{itemize}
        \item $\mathbf A_1 =\ \hull{A_2}{\mb M_1}$, $\mathbf B_1 =\ \hull{B_2}{\mb M_1}$,
        \item $\mathbf C_1 \in \age(\mathbf M_1)$ is given by the Ramsey property of $\age(\mathbf M_1)$, so that $\mathbf C_1 \rightarrow (\mathbf B_1)^{\mathbf A_1}_r$, and
        \item $\mathbf C_2 =\ \hull{C_1}{\mb M_2} \in \age(\mathbf M_2)$.
    \end{itemize}
\end{proposition}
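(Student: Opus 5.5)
The strategy is to show that, inside the common universe $M = M_1 = M_2$, embeddings between finite substructures in one language correspond bijectively to embeddings between the generated substructures in the other. Ultrahomogeneity and the equality $\aut(\mb M_1) = \aut(\mb M_2)$ are the tools for this. Once this correspondence is available, a coloring of $\emb(\mb A_2, \mb C_2)$ can be pulled back to a coloring of $\emb(\mb A_1, \mb C_1)$. The Ramsey property of $\mb C_1$ then gives a monochromatic copy of $\mb B_1$, which we transport back to a copy of $\mb B_2$. Throughout we regard $\mb A_2, \mb B_2$ as substructures of $\mb M_2$; we may do so since they lie in $\age(\mb M_2)$. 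Local finiteness guarantees that $\mb A_1, \mb B_1, \mb C_2$ are finite.

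The key lemma is the following. Let $\mb X_2 \subseteq \mb M_2$ be finitely generated and let $\gamma \in \emb(\mb X_2, \mb M_2)$. By ultrahomogeneity of $\mb M_2$, $\gamma$ extends to some $g \in \aut(\mb M_2) = \aut(\mb M_1)$. Then $g$ maps $\hull{X_2}{\mb M_1}$ onto $\hull{\gamma(X_2)}{\mb M_1}$, because automorphisms commute with taking hulls. Hence $g \restriction \hull{X_2}{\mb M_1}$ is an $\mc L_1$-embedding. It depends only on $\gamma$: two extensions $g, g'$ agree on $X_2$, so $g^{-1}g'$ fixes $X_2$ pointwise, and therefore fixes pointwise every element of the hull generated by $X_2$. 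Write $\hat\gamma$ for this restriction; the same argument works with the roles of $\mc L_1$ and $\mc L_2$ swapped. We need two instances of this. First, $\gamma \mapsto \hat\gamma$ is a bijection from $\emb(\mb A_2, \mb C_2)$ to $\emb(\mb A_1, \mb C_1)$, with inverse given by restriction to $A_2$. Here we use that $\hull{\gamma(A_2)}{\mb M_1} \subseteq \hull{C_2}{\mb M_1}$. We also need the hull to land inside $C_1$, and this requires care: we have $C_1 \subseteq C_2$, but $\hull{C_2}{\mb M_1}$ may be larger than $C_1$. Second, every $\beta_1 \in \emb(\mb B_1, \mb C_1)$ extends to an automorphism $h$. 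Then $h \restriction B_2$ maps $B_2$ into $\hull{C_1}{\mb M_2} = C_2$, so it is an element of $\emb(\mb B_2, \mb C_2)$.

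The main obstacle is the containment issue just mentioned: we need $\hat\gamma(A_1) \subseteq C_1$, while we only know $\gamma(A_2) \subseteq C_2$. I would resolve it by arguing in the other direction. Given $\chi : \emb(\mb A_2, \mb C_2) \to r$, define $\chi_1$ on $\emb(\mb A_1, \mb C_1)$ by $\chi_1(\alpha) = \chi(\bar\alpha \restriction A_2)$. Here $\bar\alpha$ is any automorphism extending $\alpha$; its restriction is well defined because $\bar\alpha$ is determined on $A_1 \supseteq A_2$. This restriction does land in $C_2$, since $\bar\alpha(A_2) \subseteq \bar\alpha(A_1) \subseteq C_1 \subseteq C_2$, and it is an $\mc L_2$-embedding because $\bar\alpha \in \aut(\mb M_2)$. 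Choose $\beta_1$ with $\chi_1$ constant on $\beta_1 \circ \emb(\mb A_1, \mb B_1)$, extend $\beta_1$ to $h \in \aut$, and set $\beta_2 = h \restriction B_2 \in \emb(\mb B_2, \mb C_2)$.

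It remains to check that every $\alpha_2 \in \emb(\mb A_2, \mb B_2)$ has the form $\alpha_2 = (\beta_1 \circ \alpha) \restriction A_2$ for some $\alpha \in \emb(\mb A_1, \mb B_1)$; then $\chi$ is constant on $\beta_2 \circ \emb(\mb A_2, \mb B_2)$. Containment is fine here: $\hat\alpha_2$ maps $A_1$ into $\hull{B_2}{\mb M_1} = B_1$. So $\alpha = \hat\alpha_2$ lies in $\emb(\mb A_1, \mb B_1)$, and $h \circ \hat\alpha_2$ is the restriction of an automorphism extending $\beta_1 \circ \hat\alpha_2$. Thus $\chi(\beta_2 \circ \alpha_2) = \chi_1(\beta_1 \circ \hat\alpha_2)$, which is constant. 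This completes the argument.
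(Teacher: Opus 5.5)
Your final argument is correct and is essentially the paper's proof: you pull $\chi$ back along $\alpha\mapsto\alpha\restriction A_2$ (justified by extending $\alpha$ to an automorphism in $\aut(\mb M_1)=\aut(\mb M_2)$), take $\beta_2=\beta_1\restriction B_2$, and lift each $\alpha_2\in\emb(\mb A_2,\mb B_2)$ to an element of $\emb(\mb A_1,\mb B_1)$ via an automorphism. The preliminary bijection claim you set aside is indeed unnecessary, and as you noticed it can fail, since only the restriction direction is used.
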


\begin{proof}
    First, we prove the following:
    For any $\alpha_1 \in \emb(\mathbf A_1, \mathbf C_1)$, $\alpha_1 \restriction A_2 \in \emb(\mathbf A_2, \mathbf C_2)$.
    By the ultrahomogeneity of $\mathbf M_1$, there is $f \in \aut(\mathbf M_1)$ extending $\alpha_1$.
    Since $\aut(\mathbf M_1) = \aut(\mathbf M_2)$, we have $f \in \aut(\mathbf M_2)$.
    Then $f \restriction A_2 = \alpha_1 \restriction A_2$ is an embedding from $\mathbf A_2$ into $\mathbf M_2$, and $\alpha_1(A_2) \subseteq C_1 \subseteq C_2$.
    Thus $\alpha_1 \restriction A_2 \in \emb(\mathbf A_2, \mathbf C_2)$.
    
    Let $\chi_2:\operatorname{Emb}(\mathbf A_2, \mathbf C_2) \rightarrow r$ be any $r$-coloring, and $\chi_1: \emb(\mathbf A_1, \mathbf C_1) \rightarrow r$ be defined by $\chi_1(\alpha_1) = \chi_2(\alpha_1 \restriction A_2)$.
    Then by $\mathbf C_1 \rightarrow (\mathbf B_1)^{\mathbf A_1}_r$, there is $\beta_1 \in \emb(\mathbf B_1, \mathbf C_1)$ such that $|\chi_1(\beta_1 \circ \emb(\mathbf A_1, \mathbf B_1))| \le 1$.
    Then by the same argument that proves $\alpha_1 \restriction A_2 \in \emb(\mathbf A_2, \mathbf C_2)$, we have $\beta_2 := \beta_1 \restriction B_2 \in \emb(\mathbf B_2, \mathbf C_2)$.
    Also, for any $\alpha_2 \in \emb(\mathbf A_2, \mathbf B_2)$, there is an extension $g \in \aut(\mathbf M_2) = \aut(\mathbf M_1)$, hence $\alpha_2 \restriction A_1 := g \restriction A_1 \in \emb(\mathbf A_1, \mathbf B_1)$ extends $\alpha_2$ (note that $A_2 \subseteq A_1$,
    but let us use the notation $\alpha_2 \restriction A_1$ for convenience).
    
    Let $\alpha_2 \in \emb(\mathbf A_2, \mathbf B_2)$ and $\alpha_2 \restriction A_1 \in \emb(\mathbf A_1, \mathbf B_1)$ be an extension of $\alpha_2$.
    Note that $\beta_2 \circ \alpha_2 = (\beta_1 \circ (\alpha_2 \restriction A_1)) \restriction A_2$, thus $\chi_2(\beta_2 \circ \alpha_2) = \chi_1(\beta_1 \circ (\alpha_2 \restriction A_1)) \in \chi_1(\beta_1 \circ \emb(\mathbf A_1, \mathbf B_1))$.
    Now we have $|\chi_2(\beta_2 \circ \emb(\mathbf A_2, \mathbf B_2))| \le |\chi_1(\beta_1 \circ \emb(\mathbf A_1, \mathbf B_1))| \le 1$.
\end{proof}

\section{\fr limits of trees}\label{sec:Fraisse}

Firstly, we introduce the languages for tree structures.
Some of the names of the languages come from the model-theoretic literature (for example, \cite[Definition 2.1]{KKS14}).
To distinguish the meet operation from logical conjunction, we denote it by $\sqcap$ rather than $\wedge$.

\begin{definition}\label{def:tree languages}
    \begin{enumerate}
        \item $\mathcal{L}_0 = \{\sqcap, \lhd, <\}$, where $\sqcap$ is a binary function symbol, and $\lhd$ and $<$ are binary relation symbols.
        \item $\mathcal{L}_{\mathrm{str}} = \{\sqcap, \lhd, <, <_{\mathrm{len}}\}$ is the {\em strong Shelah language}, where $<_{\operatorname{len}}$ is a binary relation symbol.
        \item $\mathcal{L}_s = \{\sqcap, \lhd, <, (P_n)_{n \in \omega}\}$ is the {\em Shelah language}, where each $P_n$ is a unary relation symbol.
        \item $\mathcal{L}_s^+ = \mathcal{L}_{\operatorname{str}} \cup \{P_n: n \in \omega\} = \mathcal{L}_s \cup \{<_{\operatorname{len}}\}$.
        \item $\mathcal{L}_F = \{\sqcap, \lhd, <, (P_n)_{n \in \omega}, F\}$, where $F$ is a unary function symbol.
    \end{enumerate}
\end{definition}

Note that $\mathcal{L}_0$ is the smallest language in Definition \ref{def:tree languages}.
We will mainly work with tree-like structures and interpret these symbols in the natural way:
$\sqcap$ is the meet (= $\lhd$-infimum = $\lhd$-greatest lower bound) of two elements of a tree, $\lhd$ is the tree order, and $<$ is a linear order on the elements of the tree such that $\lhd \subseteq <$ ($<$ extends $\lhd)$.

In fact, in the Shelah language, $<$ is usually denoted by $<_{\operatorname{lex}}$ in the model-theoretic literature (lex is an abbreviation for `lexicographic').
Because $<$ can be interpreted as a linear order which is not necessarily convex (= lexicographic) in our context, we drop the subscript `lex'.

Now we fix terminology for semilattices and tree structures.
Many of these terms come from \cite{Sok15}.

\begin{definition}\label{def:semilattice}
    Let $A$ be a set partially ordered by $\lhd$ and $\sqcap$ be a binary function from $A^2$ to $A$.
    \begin{enumerate}
        \item $(A, \sqcap, \lhd)$ is a {\em meet semilattice} if for any $a, b \in A$, the $\lhd$-infimum $c \in A$ of $a$ and $b$ exists, and $\sqcap(a, b) = c$.
        \item A meet semilattice $(A, \sqcap, \lhd)$ is {\em treeable} if 
        \begin{enumerate}
            \item for every $a \in A$, $\{b \in A: b \unlhd a\}$ is linearly ordered by $\unlhd$, and
            \item if $A \ne \emptyset$, then there is a $\lhd$-minimum element, called the {\em root}.
        \end{enumerate}
        If $(A, \sqcap, \lhd)$ is treeable, then $\lhd$ may be called a {\em tree order}.
        \item Suppose that $(A,\sqcap,\lhd)$ is a meet semilattice and that $<$ is a linear order on $A$ extending $\lhd$.
        $(A, \sqcap, \lhd, <)$ is {\em convexly ordered} by $<$ if for every $b \in A$, the cone $\{a \in A: b \unlhd a\}$ is a $<$-interval.
        More precisely, if $b_1$ and $b_2$ are $\unlhd$-incomparable and $b_1 < b_2$, then $a_1 < a_2$ for any $a_1 \unrhd b_1$ and $a_2 \unrhd b_2$.
        \item A treeable meet semilattice $(A, \sqcap, \lhd)$ is {\em leveled by $(P_n)_{n \in \omega}$} if every element of $A$ belongs to exactly one $P_n$, and if $a \lhd b$, $a \in P_n$, $b \in P_m$, then $n < m$.
        We may omit $(P_n)_{n \in \omega}$ and simply say $(A, \sqcap, \lhd)$ is leveled.
    \end{enumerate}
\end{definition}

It is pretty verbose to say that `$\mathbf A$ is a convexly ordered leveled treeable meet semilattice'.
Instead, we will call such an $\mathbf A$ a {\em Shelah tree}.

\begin{remark}\label{remark:Shelah tree}
    Let $(A, \sqcap, \lhd, <, (P_n)_{n \in \omega})$ be a nonempty Shelah tree.
    \begin{enumerate}
        \item The root of $(A, \sqcap, \lhd, <, (P_n)_{n \in \omega})$ may not have the level $P_0$.
        We can start from any level, say, $P_n$, and the immediate $\lhd$-successors of the root may have levels strictly greater than $P_{n+1}$, and their levels are not necessarily the same.
        \item $(A, \sqcap, \lhd, <, (P_n)_{n \in \omega})$ can be naturally regarded as an $\mathcal{L}_s$-structure $\mathbf A_s$.
        \item The immediate $\lhd$-predecessor of an element in a finite Shelah tree need not be one level below it.
    \end{enumerate}
\end{remark}

Again, our notation for classes of finite tree structures mostly comes from \cite{Sok15}, with subscripts indicating the additional structure.

\begin{definition}\label{def: class of finite Shelah trees}
    \begin{enumerate}
        \item $\mathcal{R}_s$ is the class of all finite $<$-linearly ordered set-valued $\mathcal{L}_s$-structures.
        \item $\mathcal{ET}_s (\subseteq \mathcal{R}_s)$ is the class of all finite $<$-linearly ordered leveled treeable meet semilattices, where $<$ extends $\lhd$.
        \item $\mathcal{CT}_s (\subseteq \mathcal{ET}_s)$ is the class of all finite Shelah trees.
        \item $\mc{CT} = \{\mb A|_{\mc L_0}: \mb A \in \mc{CT}_s\}$ is the class of all finite convexly ordered treeable meet semilattices in the language $\mathcal L_0$.
    \end{enumerate}
\end{definition}

We cannot hope that $\mc{ET}_s$ is a Ramsey class by the following remark and Fact \ref{fact:Ramsey implies Fraisse}.

\begin{remark}\label{remark: K_< not Fraisse}
    $\mc{ET}_s$ has JEP, but not AP.
\end{remark}

\begin{proof}
    We first show that $\mc{ET}_s$ does not have AP.
    Define $\mathbf A, \mathbf B_1, \mathbf B_2 \in \mc{ET}_s$ as follows.
    \begin{itemize}
        \item $A = \{a, b, c\}$, $\mathbf A \models P_0(a) \wedge P_1(c) \wedge P_2(b) \wedge (\sqcap(b, c) = a) \wedge (a < c < b)$.
        \item $B_1 = A \cup \{d\}$, $\mathbf A \subseteq \mathbf B_1$, $\mathbf B_1 \models P_1(d) \wedge (a \lhd d \lhd b) \wedge (a < d < c < b)$.
        \item $B_2 = A \cup \{e\}$, $\mathbf A \subseteq \mathbf B_2$, $\mathbf B_2 \models P_1(e) \wedge (a \lhd e \lhd b) \wedge (a < c < e < b)$.
    \end{itemize}
    Let $\alpha_1$ and $\alpha_2$ be the inclusion maps from $\mathbf A$ to $\mathbf B_1$ and $\mathbf B_2$, respectively.
    Suppose that there are $\mathbf C \in \mc{ET}_s$ and $\beta_i \in \operatorname{Emb}(\mathbf B_i, \mathbf C)$ for $i = 1, 2$, such that $\beta_1 \circ \alpha_1 = \beta_2 \circ \alpha_2$.
    We may assume that $\beta_1$ is an inclusion map.
    Since $\beta_2(e)$ and $d$ are $\lhd^{\mathbf C}$-less than $b$ and $\mathbf C$ is treeable, $\beta_2(e)$ and $d$ must be $\unlhd^{\mathbf C}$-comparable.
    Note that $\mathbf C \models P_1(d) \wedge P_1(\beta_2(e))$, thus the only possibility is $\beta_2(e) = d$.
    But then $\mb C \models d < c < \beta_2(e) = d$, a contradiction.

    On the other hand, the class $\mathcal{ET}_s$ has the joint embedding property:
    Place two structures above a common $P_0$-root, identifying any existing $P_0$-roots, and leave all other elements distinct.
\end{proof}

\begin{definition}\label{def:Q tree}
    \begin{enumerate}
        \item $\mathbb Q^{< \omega}$ is the set of all functions from some $n = \{0, \ldots, n-1\} \subseteq \omega$ to $\mathbb Q$.
        \item $\SQP = (\mathbb Q^{< \omega}, \sqcap, \lhd, <, <_{\operatorname{len}}, (P_n)_{n \in \omega})$ is an $\mathcal{L}_s^+$-structure with the following interpretations.
        Let $f, f_1, f_2 \in \mathbb Q^{< \omega}$.
        \begin{enumerate}
            \item Let $m = \max\{k: f_1 \restriction k = f_2 \restriction k \text{ and } k \le \min\{\dom(f_1), \dom(f_2)\}\}$.
            Define $\sqcap^{\SQP}(f_1, f_2) = f_1 \restriction m = f_2 \restriction m$.
            \item $\SQP \models f_1 \unlhd f_2$ if and only if $f_1 \subseteq f_2$.
            Equivalently, $\SQP \models f_1 \unlhd f_2$ if and only if $\sqcap^{\SQP}(f_1, f_2) = f_1$.
            \item Let $m = \operatorname{dom}(\sqcap^{\SQP}(f_1, f_2))$.
            $\SQP \models f_1 < f_2$ if and only if $\SQP \models f_1 \lhd f_2$, or $f_1$ and $f_2$ are $\unlhd$-incomparable and $f_1(m) < f_2(m)$.
            \item $\SQP \models f_1 <_{\operatorname{len}} f_2$ if and only if $\operatorname{dom}(f_1) < \operatorname{dom}(f_2)$.
            \item $\SQP \models P_m(f)$ if and only if $\operatorname{dom}(f) = m$.
        \end{enumerate}
        \item $\SQ = \SQP |_{\mathcal{L}_s} = (\mathbb Q^{< \omega}, \sqcap, \lhd, <, (P_n)_{n \in \omega})$.
        \item $\CQ = (\mathbb Q^{< \omega}, \sqcap, \lhd, <, (P_n)_{n \in \omega}, F)$ is an $\mathcal{L}_F$-structure such that $\CQ |_{\mathcal{L}_s} = \SQ$ and $F^{\CQ}(f) = f \restriction (m - 1)$ if $m = \operatorname{dom}(f) > 0$, $F^{\CQ}(f) = f$ otherwise.
    \end{enumerate}
\end{definition}

\begin{remark}\label{remark:SQ CQ}
    \begin{enumerate}
        \item $\operatorname{age}(\SQ)$ is the class of finite Shelah trees, $\mathcal{CT}_s$.
        \item $F^{\CQ}$ is definable in $\SQ$ (Definition \ref{def:definable}).
        Explicitly, $\CQ \models F(a) = b$ if and only if $\SQ \models (b \lhd a \wedge \forall z(z \lhd a \rightarrow z \unlhd b)) \vee (\not\exists z(z \lhd a) \wedge a = b)$.
        \item For every $n > 0$, $\CQ \models \forall xy((F(x) = y \wedge x \ne y \wedge P_n(x)) \rightarrow P_{n-1}(y))$.
        \item Since $\mathbf A \in  \operatorname{age}(\CQ)$ is closed under $F^{\CQ}$, for any $a \in A$, $\{n: \mathbf A \models \exists x(P_n(x) \wedge x \unlhd a)\}$ is an initial segment of $\omega$.
        Thus $\{\mathbf A |_{\mathcal{L}_s}: \mathbf A \in \age(\CQ)\} \subsetneq \age(\SQ) = \mathcal{CT}_s$.
    \end{enumerate}
\end{remark}

\begin{proposition}\label{prop:s homo}
    $\SQ = (\mathbb Q^{< \omega}, \sqcap, \lhd, <, (P_n)_{n \in \omega})$ is ultrahomogeneous.
    Consequently, $\operatorname{age}(\SQ) = \mathcal{CT}_s$ is a Fraïssé class.
\end{proposition}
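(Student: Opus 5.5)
We prove ultrahomogeneity by a back-and-forth argument. The key input is the one-point extension property: if $\mathbf A \subseteq \mathbf B$ are finite substructures of $\SQ$ with $B = A \cup \{b\}$, and $g: \mathbf A \to \SQ$ is an embedding, then $g$ extends to an embedding of $\mathbf B$ into $\SQ$. Granting this, enumerate $\mathbb Q^{<\omega}$ and alternate forth and back steps, which builds an automorphism extending a given finite partial isomorphism. At each step we must extend to the hull of the new point, since substructures are closed under $\sqcap$. For the consequence: $\age(\SQ) = \mathcal{CT}_s$ by Remark \ref{remark:SQ CQ}(1), so it is a \fr class by Theorem \ref{thm: Fraisse}(2).

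The main step is the one-point extension, and the obstacle is that $b$ may generate new meets. Note $\hull{A \cup \{b\}}{}$ is $A \cup \{b\} \cup \{\sqcap(b,a): a \in A\}$. In a tree, the meets $\sqcap(b,a)$ with $a \in A$ form a chain below $b$. Only the $\unlhd$-maximal one, call it $c$, can be new: any other equals a meet of two elements of $A$. So $\hull{A \cup\{b\}}{}$ adds at most two points, $c \unlhd b$. We therefore treat two cases. In the first, we add a single point $b$ whose meets with $A$ all lie in $A$. In the second, we add a point $c$ that is the meet of $b$ with elements of $A$, lying strictly between two comparable elements of $A$ or below the root, and then add $b$ as in the first case.

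For each case, the quantifier-free type of the new point over $A$ is determined by a small amount of data:
\begin{itemize}
\item the level $n$ of the new point;
\item its immediate $\lhd$-predecessor $p$ in $A$, if any;
\item the set of elements of $A$ above it;
\item its $<$-position relative to $A$.
\end{itemize}
We realize this type over $g(A)$ directly in $\mathbb Q^{<\omega}$, using that the tree is infinitely splitting, with density of $\mathbb Q$ at every node.

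\emph{New point $c$ strictly between $p$ and $a$ in $A$.} Choose $c' = g(a)\restriction n$. This is strictly between $g(p)$ and $g(a)$ because the levels satisfy $\operatorname{level}(p) < n < \operatorname{level}(a)$. All other order relations are forced by convexity.

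\emph{New point $c$ below the root.} Take a restriction of $g(\text{root})$ to the prescribed level.

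\emph{New leaf-like point $b$.} Here $b$ lies above $p$ at level $n > \operatorname{level}(p)$, with no element of $A$ above it, and its meets with $A$ all lie in $A$. Pick $q \in \mathbb Q$ with $g(p)^\frown q$ lying in the correct $<$-gap among the $g(p)$-successor directions used by $g(A)$. Density of $\mathbb Q$ provides such a $q$ strictly between or beyond the existing first coordinates. Then extend by arbitrary rationals to length $n$. When $b$ extends an existing direction, the meet $\sqcap(b,a)$ would exceed $p$, so this does not happen in this case. The $<$-order is then correct by convexity.

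The remaining work is to verify that, in each case, the prescribed data determine the full quantifier-free type, including $\sqcap$, $<$ and the $P_m$, using convexity and treeability. This checking is routine. The case analysis of the hull is where care is needed.
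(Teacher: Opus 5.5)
Your argument is correct, but it takes a different route from the paper.

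\textbf{Your route.} You follow the generic Fra\"{\i}ss\'e template: prove the one-point extension property for finite substructures of $\SQ$, then run a back-and-forth. This forces you to analyze hulls and to realize each type by hand. The hull analysis is that adding $b$ adds at most one new meet $c$, and $c$ lies either strictly between an element $p$ of $A$ and the least element $a'$ of $A$ above $c$, or below the root.

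\textbf{The paper's route.} The paper avoids both the back-and-forth on the tree and the hull analysis. It first closes $A$ and $B$ under initial segments. It then extends $f$ by $f^*(a\restriction n)=f(a)\restriction n$, which is well defined because $f$ preserves $\sqcap$ and the levels. Finally it builds the automorphism level by level: at each node $v$ it chooses an automorphism $\sigma_v$ of $(\mathbb Q,<)$ extending the finite increasing map on successor labels prescribed by $f^*$. In effect, passing to the downward closure adds all possible new meets (your points $c$) at once. The only freedom left is the choice of labels at successors, and the homogeneity of $(\mathbb Q,<)$ handles it.

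\textbf{Comparison.} The paper's proof is shorter and shows that $\aut(\SQ)$ is built from independent choices of automorphisms of $(\mathbb Q,<)$ at the nodes. Yours is longer and leaves some routine checks implicit:
\begin{itemize}
\item that $A\cup\{c\}$ is closed under $\sqcap$;
\item that in the ``between'' case no element of $g(A)$ outside the image of the $A$-cone of $a'$ lies above $g(a')\restriction n$. This uses that $p$ is the immediate $A$-predecessor of $a'$ and that $g$ preserves meets.
\end{itemize}
In return, your proof establishes the extension property directly, which is exactly what characterizes $\SQ$ as the Fra\"{\i}ss\'e limit of $\mathcal{CT}_s$. It also does not depend on having such an explicit label-by-label description of the automorphisms.
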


\begin{proof}
    Let $\mathbf A,\mathbf B$ be finite substructures of $\mathbf S_{\mathbb Q}$, and let $f:\mathbf A\to\mathbf B$ be an $\mathcal L_s$-isomorphism.
    Let $\mb A^* = \{a \upharpoonright n: a \in A,\ 0 \leq n \leq \operatorname{dom}(a)\}$ and define $\mb B^*$ similarly.
    These sets are finite and closed under initial segments, hence we can regard them as substructures of $\mathbf S_{\mathbb Q}$.
    Define $f^*: \mb A^* \rightarrow \mb B^*$ by $f^*(a\upharpoonright n)=f(a)\upharpoonright n$.
    Then $f^*:\mathbf A^*\to\mathbf B^*$ is an $\mathcal L_s$-isomorphism extending $f$.
    
    We now extend $f^*$ to an automorphism $\bar f$ of $\mathbf S_{\mathbb Q}$, proceeding level by level.
    Put $\bar f(\emptyset)=\emptyset$.
    Suppose that $\bar f(v)$ has been defined for every $v \in S_{\mathbb Q}$ such that $\SQ \models P_k(v)$, where $k \le n$, with $\bar f(v)=f^*(v)$ whenever $v\in A^*$.
    For any $v$ with $\SQ \models P_n(v)$, write $\lhd^{\SQ}$-successors of $v$ with level $P_{n+1}$ as $v^\frown\langle q\rangle$, where $q\in\mathbb Q$.
    The images prescribed by $f^*$ determine a finite increasing map between the rational labels of the $\lhd^{\SQ}$-immediate successors of $v$ and those of $\bar f(v)$.
    Extend this finite map to an order automorphism $\sigma_v$ of $(\mathbb Q,<)$, and define
    $$\bar f(v^\frown\langle q\rangle) = \bar f(v)^\frown\langle\sigma_v(q)\rangle$$
    for each $q \in \mathbb Q$.
    This inductive construction extends $f^*$ to 
    $\bar f\in\operatorname{Aut}(\mathbf S_{\mathbb Q})$.
\end{proof}

\begin{definition}\label{def:class of Shelah trees with function}
    \begin{enumerate}
        \item $\mathcal{R}_F$ is the class of all $<$-linearly ordered finite set-valued $\mathcal{L}_F$-structures.
        \item $\mathcal{CT}_F = \operatorname{age}(\CQ) (\subseteq \mathcal{R}_F)$ is the class of finite convexly ordered leveled treeable meet semilattices with an immediate $\lhd$-predecessor (or, more precisely, level predecessor) function $F$.
        
        Equivalently, a nonempty finite $\mathcal L_F$-structure $\mathbf A$ belongs to $\mathcal{CT}_F$ if and only if $\mathbf A|_{\mathcal L_s}$ is a Shelah tree, its root $r$ belongs to $P_0^{\mathbf A}$, $F^{\mathbf A}(r)=r$, and every $a\in P_n^{\mathbf A}$ with $n>0$ has immediate $\lhd^{\mathbf A}$-predecessor $F^{\mathbf A}(a)\in P_{n-1}^{\mathbf A}$.
        \item $\mc{CT}_s^+ = \age(\SQP)$.
        \item $\mc{CT}_{\str} = \{\mb A |_{\mc L_{\str}}: \mb A \in \mc{CT}^+_s\}$.
    \end{enumerate}
\end{definition}

\begin{remark}\label{remark:CQ and SQP}
    Every $\mb A_s \in \mc{CT}_s = \age(\SQ)$ has a unique expansion $\mb A_s^+$ to $\mc{CT}_s^+ = \age(\SQP)$, since for any $a, b \in A_s$, $\SQP \models a <_{\len} b$ if and only if there are $n, m \in \omega$ such that $\SQ \models P_n(a) \wedge P_m(b)$ and $n < m$.
    Furthermore, for any $\mathbf A_s,\mathbf B_s\in\mathcal{CT}_s$, $\emb_{\mc L_s}(\mb A_s, \mb B_s) = \emb_{\mc L_s^+}(\mb A_s^+, \mb B_s^+)$.
\end{remark}

\begin{proposition}\label{prop:F homo}
    $\CQ = (\mathbb Q^{< \omega}, \sqcap, \lhd, <, (P_n)_{n \in \omega}, F)$ is ultrahomogeneous.
    Consequently, $\operatorname{age}(\CQ) = \mathcal{CT}_F$ is a Fraïssé class.
\end{proposition}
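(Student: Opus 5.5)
We prove ultrahomogeneity of $\CQ$ by reducing to Proposition \ref{prop:s homo}, using that $F^{\CQ}$ is definable in $\SQ$ (Remark \ref{remark:SQ CQ}(2)). By Remark \ref{remark:def aut same}, $\aut(\CQ)=\aut(\SQ)$. So let $\mb A,\mb B$ be finite substructures of $\CQ$ and $f:\mb A\to\mb B$ an $\mc L_F$-isomorphism. Then $f$ is in particular an $\mc L_s$-isomorphism between the finite substructures $\mb A|_{\mc L_s}$ and $\mb B|_{\mc L_s}$ of $\SQ$. These are substructures because $A$ and $B$ are closed under $\sqcap$, and the relations are inherited. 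By Proposition \ref{prop:s homo}, $f$ extends to some $\bar f\in\aut(\SQ)=\aut(\CQ)$, which is the required automorphism extending $f$.

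The only point needing care is that substructures of $\CQ$ are closed under both $\sqcap$ and $F$. Their $\mc L_s$-reducts are therefore still $\sqcap$-closed, hence genuine substructures of $\SQ$, so Proposition \ref{prop:s homo} applies. Conversely, we do not need $\bar f$ to respect $F$ by construction: any automorphism of $\SQ$ preserves the definable function $F^{\CQ}$, and this is exactly what Remark \ref{remark:def aut same} records.

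We will also check directly that $\bar f$ extends $f$ as an $\mc L_F$-map. Since $\bar f\in\aut(\CQ)$, its restriction to $A$ equals $f$ and maps $A$ onto $B$ preserving $F$. There is nothing more to verify.

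For the consequence, we note that $\CQ$ is countable, since $\mathbb Q^{<\omega}$ is countable. It is also locally finite: the hull of a finite set is contained in the finite set of all initial segments of its elements, which is closed under $\sqcap$ and $F$. By Theorem \ref{thm: Fraisse}(2) together with Remark \ref{remark:set-valued Fraisse}, $\age(\CQ)=\mc{CT}_F$ is a Fraïssé class.

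The claimed equivalent description of $\mc{CT}_F$ in Definition \ref{def:class of Shelah trees with function} is not part of this statement, so we do not prove it here. I expect no real obstacle. The subtle step is making sure the $\mc L_s$-reduct of an $\mc L_F$-substructure is an $\mc L_s$-substructure, which holds because $\mc L_s\subseteq\mc L_F$ has only $\sqcap$ as a function symbol.
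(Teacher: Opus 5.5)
Your proof is correct and follows the paper's argument: you restrict the $\mathcal L_F$-isomorphism to an $\mathcal L_s$-isomorphism between substructures of $\SQ$, extend it by Proposition~\ref{prop:s homo}, and conclude that the extension lies in $\aut(\CQ)$ because $F^{\CQ}$ is definable in $\SQ$. You also check countability and local finiteness for the Fra\"{\i}ss\'e-class consequence, a detail the paper leaves implicit.
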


\begin{proof}
    Let $\mathbf A,\mathbf B$ be finite substructures of $\mathbf C_{\mathbb Q}$, and let $f:\mathbf A\to\mathbf B$ be an $\mathcal L_F$-isomorphism.
    $\mb A|_{\mc L_s}$ and $\mb B|_{\mc L_s}$ are finite substructures of $\SQ$, and $f$ is an $\mc L_s$-isomorphism between them.
    By Proposition \ref{prop:s homo}, $f$ extends to some $\bar f\in\operatorname{Aut}(\mathbf S_{\mathbb Q})$.
    Since $F^{\mathbf C_{\mathbb Q}}$ is definable in $\mathbf S_{\mathbb Q}$ (Remark \ref{remark:SQ CQ}(2)), we have $\bar f\in\operatorname{Aut}(\mathbf C_{\mathbb Q})$ (Remark \ref{remark:def aut same}).
\end{proof}

\begin{corollary}\label{lemma: CQ SQ Ramsey}
    $\mc{CT}_s$ is a Ramsey class if and only if $\mc{CT}_F$ is a Ramsey class.
\end{corollary}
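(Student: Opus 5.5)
This follows from Corollary \ref{cor:aut same Ramsey} with $\mathcal L_1 = \mathcal L_s$, $\mathcal L_2 = \mathcal L_F$, $\mathbf M_1 = \SQ$ and $\mathbf M_2 = \CQ$. The plan is to check each hypothesis of that corollary using the results just established.

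\emph{Ultrahomogeneity and countability.} Both structures are countable, since $\mathbb Q^{<\omega}$ is countable. They are ultrahomogeneous by Propositions \ref{prop:s homo} and \ref{prop:F homo}.

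\emph{Local finiteness.} For $\SQ$: a finite set $A'$ generates under $\sqcap$ only meets of elements of $A'$. Each such meet is an initial segment of some element of $A'$, and each element has only finitely many initial segments, so the hull is finite. For $\CQ$: the hull of $A'$ is contained in the set of all initial segments of elements of $A'$. This set is finite and is closed under both $\sqcap$ and $F$, so the hull is finite.

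\emph{Reduct and definability.} By Definition \ref{def:Q tree}, $\CQ|_{\mathcal L_s} = \SQ$. The only symbol of $\mathcal L_F \setminus \mathcal L_s$ is $F$, and $F^{\CQ}$ is definable in $\SQ$ by Remark \ref{remark:SQ CQ}(2).

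\emph{Conclusion.} Corollary \ref{cor:aut same Ramsey} now gives that $\age(\SQ)$ is Ramsey if and only if $\age(\CQ)$ is Ramsey. By Remark \ref{remark:SQ CQ}(1), $\age(\SQ) = \mathcal{CT}_s$, and by Definition \ref{def:class of Shelah trees with function}, $\age(\CQ) = \mathcal{CT}_F$. This is exactly the statement.

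\emph{Main obstacle.} The only point needing care is the local finiteness check, which amounts to verifying that the hulls are contained in the initial-segment closure of $A'$; this is routine. A direct proof avoiding KPT is also available in one direction: if $\mathcal{CT}_s$ is Ramsey, then Proposition \ref{prop:Ramsey witness transfer} transfers explicit Ramsey witnesses to $\mathcal{CT}_F$, since the two automorphism groups coincide by Remark \ref{remark:def aut same}. For the equivalence as stated, however, the corollary suffices.
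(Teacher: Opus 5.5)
Your proof is correct and follows the same route as the paper: Corollary~\ref{cor:aut same Ramsey} applied to $\SQ$ and $\CQ$, with ultrahomogeneity from Propositions~\ref{prop:s homo} and~\ref{prop:F homo} and definability of $F^{\CQ}$ from Remark~\ref{remark:SQ CQ}(2), and your explicit local-finiteness check fills in a hypothesis the paper leaves implicit. As a minor aside, Proposition~\ref{prop:Ramsey witness transfer} has symmetric hypotheses, so the KPT-free witness transfer works in both directions, and the paper in fact uses it in the direction from $\mathcal{CT}_F$ to $\mathcal{CT}_s$ in Corollary~\ref{cor:Shelah Ramsey}(2).
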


\begin{proof}
    By Remark \ref{remark:def aut same} and Remark \ref{remark:SQ CQ}(2), $\aut(\SQ) = \aut(\CQ)$.
    Thus by Corollary \ref{cor:aut same Ramsey}, Propositions \ref{prop:s homo} and \ref{prop:F homo}, $\age(\SQ) = \mc{CT}_s$ is a Ramsey class if and only if $\age(\CQ) = \mc{CT}_F$ is a Ramsey class.
\end{proof}

\section{Proof by the partite construction}\label{sec:partite}

We first prove that $\mathcal{CT}_F$ is a locally finite subclass of $\mathcal R_F$, and then transfer the resulting Ramsey witnesses to $\mathcal{CT}_s$ and $\mathcal{CT}_s^+$.

\begin{observation}\label{obs:basic}
    Let $\mathbf{B}, \mathbf C, \mathbf C_0$ be as in Definition \ref{def:LFS}, $n = 1$, $\mathcal{K} = \mathcal{CT}_F$ and $\mathcal{R} = \mathcal{R}_F$.
    Then we have the following easy observations, and {\em we may use them without further mention}.
    Here, the $F^{\mathbf C}$-closure of $a$ means the smallest subset of $C$ containing $a$ and closed under $F^{\mathbf C}$.
    \begin{enumerate}
        \item If $\sqcap^{\mathbf C}(a, b)$ is not empty, then $\hull{a,b}{\mb C}$ is irreducible.
        \item For any $a, b \in C$, $|\sqcap^{\mathbf C}(a, b)| \le 1$.
        Thus we will write $\sqcap^{\mathbf C}(a, b) = c$ instead of $\sqcap^{\mathbf C}(a, b) = \{c\}$.
        \item $\sqcap^{\mathbf C}$ is idempotent and commutative.
        That is, for any $a, b \in C$, $\sqcap^{\mathbf C}(a, a) = a$ and $\sqcap^{\mathbf C}(a, b) = \sqcap^{\mathbf C}(b, a)$.
        \item For any distinct $a, b \in C$, if $\sqcap^{\mathbf C}(a, b) = c$, then $c \unlhd^{\mathbf C} a, b$.
        \item For any $a \in C$, there is a unique $n \in \omega$ such that $a \in P_n^{\mathbf C}$.
        \item For any $a \in C$, $|F^{\mathbf C}(a)| = 1$.
        \item $a \unlhd^{\mathbf C} b$ if and only if $a$ is in the $F^{\mathbf C}$-closure of $b$.
        Also, if $k$ is the least number such that $a$ can be obtained from $k$ applications of $F^{\mathbf C}$ to $b$ and $b \in P_n^{\mathbf C}$, then $a \in P_{n - k}^{\mathbf C}$.
        \item For any $a \in C$, $\hull{a}{\mb C}$ is the $F^{\mathbf C}$-closure of $a$ in $\mathbf C$, and it is the same as $\{b \in C: b \unlhd^{\mathbf C} a\}$.
        \item For any $a \in C$, the set of $\lhd^{\mathbf C}$-predecessors of $a$ is linearly ordered by $\lhd^{\mathbf C}$.
        \item Suppose $a, b \in C$ are {\em $\lhd^{\mathbf C}$-connected}, that is, there is a sequence $a = a_0, a_1, \ldots, a_n = b$ such that for every $i < n$, $a_i \lhd^{\mathbf C} a_{i + 1}$ or $a_{i+1} \lhd^{\mathbf C} a_i$.
        Then there is $c \in C$ such that $c\unlhd^{\mathbf C}a, b$.
        \item If $a <^{\mb C} b$, then $\hull{a, b}{\mb C}$ is irreducible, hence there is $c\in C$ such that $c = \sqcap^{\mb C}(a, b)$.
        \item $<^\mathbf C$ is acyclic.
        \item $\lhd^{\mathbf C} \subseteq <^{\mathbf C}$.
    \end{enumerate}
\end{observation}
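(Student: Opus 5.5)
The general strategy is to derive each item from conditions (1) and (3) of Definition \ref{def:LFS} with $n=1$. By (3), every single element $a\in C$, and more precisely the hull $\hull{a}{\mb C}$, has a completion in $\mc{CT}_F$. By (1), there is a homomorphism-embedding $f\colon \mb C\to \mb C_0$ with $\mb C_0\in\mc R_F$. Homomorphisms preserve relations, and set-valued function values map into function values. So I will obtain upper bounds (at most one value, acyclicity) from $\mb C_0$ and lower bounds (nonemptiness of $F$, existence of levels) from the completions of small hulls.

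Items (5), (6) and the first part of (8) come from the hull of one point. Let $g$ be a homomorphism-embedding of $\hull{a}{\mb C}$ into some $\mb D\in\mc{CT}_F$.
\begin{itemize}
\item For (6): $F^{\mb D}$ is total, and the homomorphism condition gives $g(F^{\mb C}(a))\subseteq F^{\mb D}(g(a))$, so $|F^{\mb C}(a)|\le 1$. To get nonemptiness I use irreducibility. If $F^{\mb C}(a)=\emptyset$, then $\{a\}$ is a substructure. I still need to check how $\hull{a}{}$ sits inside an irreducible substructure that extends to $\mb B$ by (2); the embedded copy of $\mb B$ then forces $F^{\mb C}(a)\neq\emptyset$, since $F^{\mb B}$ is total.
\item For (5): the same extension-to-$\mb B$ argument applied to $\{a\}$ gives at least one level. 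Mapping into $\mb C_0$ does not bound the number of levels, but mapping into $\mb D$ does: $P_n$ is a relation, so $a\in P_n^{\mb C}$ implies $g(a)\in P_n^{\mb D}$, and $\mb D$ has unique levels.
\end{itemize}

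The remaining items follow the same pattern.
\begin{itemize}
\item For (2): the $\sqcap$-values are bounded by $\mb C_0$, where $f$ maps $\sqcap^{\mb C}(a,b)$ into a singleton. For injectivity I apply $f$ on the irreducible hull from (1). Item (1) itself holds because a nonempty $\sqcap(a,b)$ prevents splitting $a$ and $b$ into different free factors.
\item For (3), (4), (9) and (13): these are relational facts that transfer along $f$ to an ordinary $\mc{CT}_F$ or linearly ordered structure. The converse direction needs the embedding property on irreducible hulls: for instance, $\sqcap(a,a)=a$ because $f$ restricted to $\hull{a}{}$ is an embedding into a structure in which $\sqcap$ is idempotent.
\item For (12): acyclicity of $<$ follows because $f$ maps a $<$-cycle in $\mb C$ to one in the linearly ordered $\mb C_0$.
\item For (11): $a<b$ makes $\{a,b\}$ lie in a common tuple of a relation, which forces irreducibility of the hull, and then (2) applied via $\mb B$ gives existence of the meet.
\end{itemize}

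Items (7) and (10) are the main obstacle, because they involve iterates of $F$ and must be checked carefully in $\mb C$ itself rather than in a completion. For (7), I would argue that the $F$-closure of $b$ is a finite chain $b, F(b), F^2(b),\dots$. Its levels decrease by one at each step, using the completion of $\hull{b}{}$ together with Remark \ref{remark:SQ CQ}(3) transported back through the embedding on the irreducible hull. I would then show that $\lhd^{\mb C}$ below $b$ coincides with this chain by embedding $\hull{a,b}{}$ (irreducible by (11) and (13)) into its completion. Item (10) then follows by induction along the connecting sequence, taking the root of each $F$-chain, which lies at level $0$.

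I expect the delicate point to be distinguishing facts that hold in completions from facts that hold in $\mb C$. Throughout, I would check that every hull used is irreducible, so that the homomorphism-embedding restricts to a genuine embedding there.
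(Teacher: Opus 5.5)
Your items (1), (11) and (12) are fine and match the paper. The tools you use for the remaining upper bounds and ``converse directions'' are not available.

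First, condition (3) of Definition \ref{def:LFS} with $n=1$ only gives completions of substructures of $\mathbf C$ with \emph{at most one element}. But $\hull{a}{\mathbf C}$ is usually larger: if $a\in P_k^{\mathbf C}$, it contains the whole $F^{\mathbf C}$-chain below $a$. The hull $\hull{a,b}{\mathbf C}$ is larger still. So the completions $\mathbf D$ you invoke for (5), (6), (7), (13) and for the embedding-on-hulls arguments are not given to you.

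Second, $\mathbf C_0\in\mathcal R_F$ is an arbitrary finite $<$-linearly ordered \emph{set-valued} structure. Its $\sqcap^{\mathbf C_0}$ and $F^{\mathbf C_0}$ need not be singleton-valued, and $\lhd^{\mathbf C_0}$ satisfies no tree axioms. So $f$ does not send $\sqcap^{\mathbf C}(a,b)$ into a singleton, even after restricting to an irreducible hull. Nothing in (2), (3), (4), (9) or (13) can be pulled back from $\mathbf C_0$. The linearity of $<^{\mathbf C_0}$ is its only usable feature, which is why the paper uses $\mathbf C_0$ only for (12).

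The missing idea is the point you list as something you ``still need to check'': $\hull{a}{\mathbf C}$ is itself irreducible. In any free-amalgam decomposition, $a$ lies in one factor, and that factor is a substructure containing $a$, hence is everything. Likewise $\hull{a,b}{\mathbf C}$ is irreducible whenever $a,b$ occur together in a tuple of $\lhd^{\mathbf C}$ or $<^{\mathbf C}$, or $\sqcap^{\mathbf C}(a,b)\neq\emptyset$. Condition (2) then places these hulls inside an embedded copy $\beta(\mathbf B)$ of $\mathbf B\in\mathcal{CT}_F$.

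An embedding preserves function values exactly and reflects relations, so you get both bounds at once. For example:
\begin{itemize}
\item $F^{\mathbf C}(a)=\beta(F^{\mathbf B}(\beta^{-1}(a)))$ is a singleton;
\item $a$ lies in exactly one $P_n^{\mathbf C}$;
\item $a\lhd^{\mathbf C}b$ puts $a,b$ in a copy of $\mathbf B$, where $a$ is an $F$-iterate of $b$ and $a<b$.
\end{itemize}
This is how the paper gets (2)--(9) and (13) directly from condition (2), with no completions at all. Your arguments for (7) and (10) go through once the lower items are established this way.
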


\begin{proof}
    \begin{enumerate}
        \item Let $\mathbf C' := \hull{a,b}{\mb C}$.
        Suppose that $\mathbf C'$ is a free amalgam of two proper substructures $\mathbf C'_1$ and $\mathbf C'_2$ of $\mathbf C'$ for a contradiction.
        Since $\sqcap^{\mathbf C}(a, b) \ne \emptyset$, by the definition of free amalgamation, $a$ and $b$ belong to the same $C'_i$.
        Thus $\mathbf C'_i$ is a substructure containing $a$ and $b$, hence $\mb C'_i = \mb C'$, a contradiction.
        \item Suppose that $\sqcap^{\mathbf C}(a, b)$ is not empty.
        Then $\hull{a, b}{\mb C}$ is irreducible by (1), so by Definition \ref{def:LFS}(2), there is an embedding from $\hull{a, b}{\mb C}$ into $\mathbf B \in \mathcal{CT}_F$.
        Since the value of $\sqcap^{\mathbf B}$ is always a singleton, (2) follows.
        \item[(3)-(7)] Note that for every $a\in C$, the substructure $\langle a\rangle_{\mathbf C}$ is irreducible.
        These items easily follow from Definition \ref{def:LFS}(2).
        \item[(8)] It easily follows from (7) (and other observations).
        \item[(9)] Follows from (7).
        \item[(10)] Follows from (9).
        \item[(11)] The irreducibility of $\hull{a,b}{\mb C}$ is clear.
        By Definition \ref{def:LFS}(2), $\hull{a,b}{\mb C}$ embeds into $\mb B \in \mc{CT}_F$, thus there is $c = \sqcap^{\mb C}(a, b)$.
        \item[(12)] Recall that there is a homomorphism-embedding from $\mathbf C$ into $\mathbf C_0$ by Definition \ref{def:LFS}(1).
        Since every homomorphism-embedding is a homomorphism and $<^{\mathbf C_0}$ is acyclic, $<^\mathbf C$ is acyclic.
        \item [(13)] This follows from Definition \ref{def:LFS}(2). \qedhere
    \end{enumerate}
\end{proof}

\begin{theorem}\label{thm:F LFS}
    $\mathcal{CT}_F$ is a locally finite subclass of $\mathcal{R}_F$.
\end{theorem}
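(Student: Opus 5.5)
The plan is to take $n=1$ (condition (3) of Definition \ref{def:LFS} will hardly be used) and to build, from any $\mathbf C$ satisfying (1)--(3), an explicit $\mathbf D\in\mathcal{CT}_F$ together with a homomorphism-embedding $\pi:\mathbf C\to\mathbf D$. The tree skeleton will come from $F^{\mathbf C}$. By Observation \ref{obs:basic}(6),(8), each $a\in C$ has a single $F^{\mathbf C}$-chain $a, F(a), F^2(a),\dots$, and $\hull{a}{\mathbf C}$ is irreducible and embeds into $\mathbf B$. Hence this chain drops one level at a time and ends at an element $r_a\in P_0^{\mathbf C}$ with $F(r_a)=r_a$.

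\emph{Step 1: the universe and the tree structure of $\mathbf D$.} The set $P_0^{\mathbf C}$ may contain several elements, but an irreducible substructure embeds into $\mathbf B$ and so contains at most one of them. I will let $D$ be $C$ with all of $P_0^{\mathbf C}$ collapsed to one root $r$, and let $\pi$ be the quotient map. Then:
\begin{itemize}
\item $F^{\mathbf D}$ is induced by $F^{\mathbf C}$, and the levels $P_m^{\mathbf D}$ are inherited from $\mathbf C$.
\item $\lhd^{\mathbf D}$ is the strict ancestor relation of the rooted tree given by $F^{\mathbf D}$, and $\sqcap^{\mathbf D}$ is the tree meet.
\end{itemize}
Every $a\in P_m^{\mathbf D}$ with $m>0$ then has immediate predecessor $F^{\mathbf D}(a)\in P_{m-1}^{\mathbf D}$.

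I then check that $\pi$ preserves the tree symbols.
\begin{itemize}
\item By Observation \ref{obs:basic}(7), $\lhd^{\mathbf C}$ is exactly the $F$-closure relation, so it is mapped into $\lhd^{\mathbf D}$.
\item If $c=\sqcap^{\mathbf C}(a,b)$, then $\hull{a,b}{\mathbf C}$ contains the full $F$-chains of $a$ and $b$ and embeds into $\mathbf B$. In $\mathbf B$, $c$ is the last common element of these chains, so $\pi(c)=\sqcap^{\mathbf D}(\pi a,\pi b)$.
\end{itemize}

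\emph{Step 2: the convex order (the main obstacle).} I must choose a linear order $<^{\mathbf D}$ extending $\lhd^{\mathbf D}$ that is convex and contains $\pi(<^{\mathbf C})$. A convex order on a tree is the same as a choice of a linear order on each set of siblings, extended lexicographically. For siblings $x\ne y$ (children of a common node $c$), put $x\prec y$ if some $a\unrhd^{\mathbf D} x$ and $b\unrhd^{\mathbf D} y$ satisfy $a<^{\mathbf C}b$.

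The key claim is $\prec\subseteq{<^{\mathbf C}}$. Suppose $a<^{\mathbf C}b$. By Observation \ref{obs:basic}(11), $\hull{a,b}{\mathbf C}$ is irreducible, so it embeds into $\mathbf B$ and contains $c=\sqcap(a,b)$. It also contains the children $x,y$ of $c$ below $a$ and $b$, since these lie on the $F$-chains. Convexity of $<^{\mathbf B}$ then forces $x<^{\mathbf C}y$. By Observation \ref{obs:basic}(12), $<^{\mathbf C}$ is acyclic, so $\prec$ is acyclic on each sibling set and extends to a linear order there. I define $<^{\mathbf D}$ lexicographically from these sibling orders and $\lhd^{\mathbf D}$.

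It remains to see that $\pi(<^{\mathbf C})\subseteq{<^{\mathbf D}}$. If $a<^{\mathbf C}b$ with $\pi a,\pi b$ incomparable, this holds by construction. If they are comparable, then $b\lhd a$ is impossible, because $\lhd^{\mathbf C}\subseteq{<^{\mathbf C}}$ (Observation \ref{obs:basic}(13)) together with acyclicity would give a cycle. So $a\lhd b$, and $\lhd^{\mathbf D}\subseteq{<^{\mathbf D}}$ finishes this case.

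\emph{Step 3: $\mathbf D\in\mathcal{CT}_F$ and $\pi$ is a homomorphism-embedding.} Membership in $\mathcal{CT}_F$ follows from the description of $\mathcal{CT}_F$ in Definition \ref{def:class of Shelah trees with function}(2). Step 1 and Step 2 show that $\pi$ is a homomorphism.

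For an irreducible substructure $\mathbf C'$, we know that $\pi\restriction C'$ is injective because $C'$ contains at most one $P_0$-element. Now $C'$ is $F$-closed and $\mathbf C'$ is isomorphic to a substructure of $\mathbf B$. So on $C'$ the relations $\lhd$ and $\sqcap$ computed in $\mathbf D$ agree with those of $\mathbf C'$. Both $<^{\mathbf C'}$ and $<^{\mathbf D}\restriction\pi(C')$ are linear orders, and the first maps into the second, so they coincide. Hence $\pi\restriction C'$ is an embedding.

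The delicate point throughout is Step 2, namely that the sibling relation $\prec$ is acyclic. I expect it to go through via the inclusion $\prec\subseteq{<^{\mathbf C}}$ together with Observation \ref{obs:basic}(12). If a gap appears there, I would instead take $n$ larger and use condition (3) on the finitely many witnesses of a hypothetical cycle.
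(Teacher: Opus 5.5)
Your proposal is correct and follows essentially the same route as the paper. Both take $n=1$, build a convex order sibling-set by sibling-set after showing that the required comparisons already lie in $<^{\mathbf C}$ (via irreducibility of $\hull{a,b}{\mathbf C}$, an embedding into $\mathbf B$, convexity there, and acyclicity of $<^{\mathbf C}$), and finally identify the $P_0$-roots.

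The only difference is presentation. The paper first completes each $\lhd^{\mathbf C}$-connected component separately (its two claims on meets and convex orders) and then glues the component roots into one new vertex. You instead collapse $P_0^{\mathbf C}$ in one step. These give the same construction, because the components are exactly the fibres of $a\mapsto r_a$.
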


\begin{proof}
    Fix $\mathbf B\in\mathcal{CT}_F$ and $\mathbf C_0\in\mathcal R_F$, and put $n(\mathbf B,\mathbf C_0)=1$.
    Let $\mathbf C$ be a finite set-valued $\mathcal L_F$-structure satisfying the conditions in Definition \ref{def:LFS}(1)--(3).
    We show that $\mathbf C$ has a completion in $\mathcal{CT}_F$.
    Let $C^0, \ldots, C^{l-1}$ be an enumeration of all $\tless{C}$-connected components ($\lhd^{\mb C}$-connectedness is defined in Observation \ref{obs:basic}(10)) of $\mathbf C$. 
    Note that if $a \in C^i$ and $b \in C^j$ where $i < j$, then there is no relation between $a$ and $b$ at all; $\sqcap^{\mb C}(a, b) = \emptyset$ and $a, b$ are $<^{\mb C}$-incomparable (Observation \ref{obs:basic}(11)).

    \begin{claim}\label{claim: meet well defined}
        For any $i < l$ and any $a, b \in C^i$, there is a $\lhd^{\mathbf C}$-greatest lower bound $c$ of $a, b$.
        Furthermore, if $\sqcap^{\mb C}(a, b) = d$ for some $d$, then $d = c$.
    \end{claim}
    
    \begin{proof}[Proof of Claim \ref{claim: meet well defined}]
        For any $a, b \in C^i$, since $\mb C^i$ is $\lhd^{\mathbf C}$-connected, there is $w \in C^i$ such that $w \unlhd^{\mathbf C} a, b$ (Observation \ref{obs:basic}(10)).
        As the $\lhd^{\mathbf C}$-predecessors of $a$ are linearly ordered (Observation \ref{obs:basic}(9)), there is a $\lhd^{\mathbf C}$-greatest such $w$, say, $c \in C^i$.
        Assume that $\sqcap^{\mb C}(a, b) = d$ for some $d \in C^i$.
        Note that $\hull{a,b}{\mb C} (\subseteq \mathbf C^i)$ is irreducible and all of the $\lhd^{\mathbf C}$-predecessors of $a$ and $b$ are in $\hull{a, b}{\mb C}$.
        By Definition \ref{def:LFS}(2), there is an embedding $e:\ \hull{a,b}{\mb C}\ \rightarrow \mathbf B$.
        Since $c\unlhd^{\mathbf C}a,b$, we have $e(c)\unlhd^{\mathbf B} \sqcap^{\mathbf B}(e(a),e(b))=e(d)$.
        Thus $c\unlhd^{\mathbf C}d$.
        On the other hand, $d\unlhd^{\mathbf C}a,b$, so the choice of $c$ gives $d\unlhd^{\mathbf C}c$.
        Hence $c=d$.
    \end{proof}

    \begin{claim}\label{claim:convex}
        For any $i < l$, we can define a convex linear order $<^{\mb C^i}$ on $C^i$ that extends $<^{\mb C}$ on $C^i$.
        That is, there is $<^{\mb C^i}$ on $C^i$ such that $<^{\mb C} \cap {(C^i)}^2 \subseteq <^{\mb C^i}$, $<^{\mb C^i}$ is a linear order on $C^i$, and it satisfies the following condition:
        If $a, b \in C^i$ are $\unlhd^{\mb C}$-incomparable, $a <^{\mb C^i} b$, $a \unlhd^{\mb C} c$ and $b \unlhd^{\mb C} d$, then $c <^{\mb C^i} d$.
    \end{claim}

    \begin{proof}[Proof of Claim \ref{claim:convex}]
        For each $c\in C^i$, let $\operatorname{im}(c)$ be the set of immediate $\lhd^{\mathbf C}$-successors of $c$.
        We will first linearly order $\operatorname{im}(c)$ for each $c\in C^i$, and then define $<^{\mb C^i}$ lexicographically using these local orders.

        Let $<^{\mb C^i}$ on $\operatorname{im}(c)$ be any linear order extending the restriction of $<^{\mathbf C}$ to $\operatorname{im}(c)$ and satisfying the following condition:
        For distinct $a,b\in\operatorname{im}(c)$, we have $a<^{\mb C^i}b$ if there are $x,y\in C$ such that $x\unrhd^{\mathbf C}a$, $y\unrhd^{\mathbf C}b$, and $x<^{\mathbf C}y$.
        To see that such a linear extension exists, suppose that these conditions hold.
        By Observation \ref{obs:basic}(8) and (11), $\langle x,y\rangle_{\mathbf C}$ contains $a,b$ and is irreducible, hence embeds into $\mathbf B\in\mathcal{CT}_F$ by Definition \ref{def:LFS}(2).
        Since $a,b$ are $\unlhd^{\mathbf C}$-incomparable, the convexity of $<^{\mathbf B}$ implies $a<^{\mathbf C}b$.
        Thus all the required $<^{\mb C^i}$-comparisons already belong to the restriction of $<^{\mathbf C}$ to $\operatorname{im}(c)$.
        Since this relation is acyclic by Observation \ref{obs:basic}(12), such a linear extension exists.

        For any distinct $a,b\in C^i$, if $a\lhd^{\mathbf C}b$, then we let $a<^{\mb C^i}b$.
        If $a,b$ are $\unlhd^{\mathbf C}$-incomparable, then by Claim \ref{claim: meet well defined} there is $c\in C^i$, a $\lhd^{\mathbf C}$-greatest lower bound of $a$ and $b$.
        There are unique $x,y\in\operatorname{im}(c)$ such that $x\unlhd^{\mathbf C}a$ and $y\unlhd^{\mathbf C}b$.
        We have already defined $<^{\mb C^i}$ locally on $\operatorname{im}(c)$; we let $a<^{\mb C^i}b$ if and only if $x<^{\mb C^i}y$.

        To see that $<^{\mb C^i}$ extends $<^{\mathbf C}$ on $C^i$, suppose that $a<^{\mathbf C}b$.
        If $a,b$ are $\unlhd^{\mathbf C}$-comparable, then $a\lhd^{\mathbf C}b$ by Observation \ref{obs:basic}(12)--(13), so $a<^{\mb C^i}b$.
        Otherwise, $a<^{\mb C^i}b$ follows from the condition on the local order at their $\lhd^{\mb C}$-greatest lower bound.
        Furthermore, we have defined $<^{\mb C^i}$ lexicographically, so it is a convex linear order.
    \end{proof}
    
    We are ready to complete the partial $\mathcal{L}_F$-structure $\mathbf C$ to an $\mathcal{L}_F$-structure $\mathbf D \in \mathcal{CT}_F$.
    As an intermediate step, we define a partial $\mathcal{L}_F$-structure $\mathbf C_*$ as follows.
    The universe of $\mathbf C_*$ is $C$.
    For every $i < l$ and $a, b \in C^i$, let $\sqcap^{\mb C_*}(a, b) = c$, where $c$ is the $\lhd^{\mathbf C}$-greatest lower bound of $a, b$.
    By Claim \ref{claim: meet well defined}, $\sqcap^{\mb C_*}$ is well-defined.
    Again, for every $i < l$ and $a, b \in C^i$, $a <^{\mb C_*} b$ if and only if $a <^{\mb C^i} b$, where $<^{\mb C^i}$ is a convex linear order obtained from Claim \ref{claim:convex}.
    For other symbols, let $\mathbf C_* |_{\mathcal{L}_F \setminus \{\sqcap, <\}} = \mathbf C |_{\mathcal{L}_F \setminus \{\sqcap, <\}}$.
    The identity map from $\mathbf C$ to $\mathbf C_*$ is a homomorphism-embedding.
    Note that $\mathbf C_*$ is a disjoint union of structures in $\mathcal{CT}_F$, i.e., $\hull{C^i}{\mb C_*} \in \mc{CT}_F$ for each $i < l$.

    Finally, we define the $\mc L_F$-structure $\mathbf D$.
    The universe $D$ is formed by identifying the roots of $\mathbf C_*$.
    Formally, let $r_i$ be the element of the component on $\mb C^i$ with level $P_0$ and $d \notin \bigcup_{i < l} C^i$.
    $D = \{d\} \cup \bigcup_{i < l} C^i\setminus\{r_i\}$.
    Let $q: C \to D$ be the map such that $q \restriction C^i\setminus\{r_i\}$ is the identity map on $C^i\setminus\{r_i\}$ and $q(r_i) = d$ for every $i < l$.
    Let $d$ be the root of $\mathbf D$, which has level $P_0$.
    We naturally extend $\sqcap^{\mb D}$:
    For any pair $i \ne j < l$, $a \in C^i \setminus \{r_i\}$ and $b \in C^j \setminus \{r_j\}$, let $\sqcap^{\mathbf D}(a, b) = d$.
    Extend $<^{\mathbf C_*}$ so that for any $i < j < l$, $a \in C^i \setminus \{r_i\}$ and $b \in C^j \setminus \{r_j\}$, $a <^{\mb D} b$.
    The restriction of $q$ to each component $\mb C^i$ is an $\mc L_F$-embedding.
    Since every irreducible substructure of $\mathbf C_*$ is contained in one $\mb C^i$, $q$ is a homomorphism-embedding.
    Consequently, $\mathbf D$ is also a completion of $\mathbf C$.
\end{proof}

\begin{corollary}\label{cor:Shelah Ramsey}
    Let $2 \le r \in \omega$.
    \begin{enumerate}
        \item $\mathcal{CT}_F = \age(\CQ)$ is a Ramsey class.
        Given $\mb A_F, \mb B_F \in \mc{CT}_F$, the Ramsey witness $\mb C_F$ can be obtained from the iterated partite construction followed by a completion
        in $\mathcal{CT}_F$.
        \item $\mathcal{CT}_s = \age(\SQ)$ is a Ramsey class.
        Given $\mb A_s, \mb B_s \in \mc{CT}_s$, the Ramsey witness $\mb C_s$ can be obtained from (1) and Proposition \ref{prop:Ramsey witness transfer}:
        $\mb C_s =\ \hull{C_F}{\SQ}$, where $\mb C_F \rightarrow (\mb B_F)^{\mb A_F}_r$, $\mb A_F =\ \hull{A_s}{\CQ}, \mb B_F =\ \hull{B_s}{\CQ}$, and $\mb C_F$ is obtained from (1).
        \item $\mathcal{CT}_s^+ = \age(\SQP)$ is a Ramsey class.
        Given $\mb A_s^+, \mb B_s^+ \in \mc{CT}_s^+$, the Ramsey witness $\mb C_s^+$ can be obtained from (2):
        $\mb C_s^+ \in \mc{CT}_s^+$ is the unique expansion of $\mb C_s$, where $\mb C_s \rightarrow (\mb B_s)^{\mb A_s}_r$, $\mb B_s^+ |_{\mc L_s}= \mb B_s$ and $\mb A_s^+ |_{\mc L_s}= \mb A_s$.
    \end{enumerate}
\end{corollary}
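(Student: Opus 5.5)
The three parts will be proved in order, each feeding into the next.

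For (1), I apply Corollary \ref{cor:LFS partite construction} with $\mathcal L=\mathcal L_F$, $\mathcal R=\mathcal R_F$ and $\mathcal K=\mathcal{CT}_F$. Its two hypotheses are available:
\begin{itemize}
\item $\mathcal{CT}_F$ is a locally finite subclass of $\mathcal R_F$ by Theorem \ref{thm:F LFS};
\item $\mathcal{CT}_F=\age(\CQ)$ has HP and AP because it is a \fr class (Proposition \ref{prop:F homo}).
\end{itemize}
One should also note that every member of $\mathcal{CT}_F$ is $<$-linearly ordered, hence irreducible, so Observation \ref{obs:LFS completion} applies to the completion step. The corollary then gives both the Ramsey property and the description of $\mb C_F$ as the output of the iterated partite construction followed by a completion in $\mathcal{CT}_F$.

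For (2), I invoke Proposition \ref{prop:Ramsey witness transfer} with $\mb M_1=\CQ$ and $\mb M_2=\SQ$. The hypotheses to check are:
\begin{itemize}
\item both structures are countable, locally finite and ultrahomogeneous (Propositions \ref{prop:s homo} and \ref{prop:F homo});
\item local finiteness of $\CQ$ holds because the hull of a finite set is its finite $F$-downward closure, and $\SQ$ is locally finite since meets of finitely many elements of $\mathbb Q^{<\omega}$ stay among finitely many initial segments;
\item they share the universe $\mathbb Q^{<\omega}$;
\item $\aut(\SQ)=\aut(\CQ)$ by Remark \ref{remark:def aut same} and Remark \ref{remark:SQ CQ}(2);
\item $\age(\CQ)$ is Ramsey by (1).
\end{itemize}
Given $\mb A_s,\mb B_s\in\mathcal{CT}_s$, we may regard them as substructures of $\SQ$ by Remark \ref{remark:SQ CQ}(1). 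Setting $\mb A_F=\hull{A_s}{\CQ}$, $\mb B_F=\hull{B_s}{\CQ}$, and taking $\mb C_F$ from (1), the proposition yields $\mb C_s=\hull{C_F}{\SQ}\to(\mb B_s)^{\mb A_s}_r$.

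The one point needing care is that $\mb C_F$ produced by (1) must be realized inside $\CQ$, as Proposition \ref{prop:Ramsey witness transfer} takes $\mb C_1\in\age(\mb M_1)$ as a substructure. Since $\mb C_F\in\age(\CQ)$, I would fix an embedding of it into $\CQ$ and replace it by its image. The arrow relation is invariant under isomorphism, so this is harmless.

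For (3), I use Remark \ref{remark:CQ and SQP}. Each $\mb X_s\in\mathcal{CT}_s$ has a unique expansion $\mb X_s^+\in\mathcal{CT}_s^+$, and $\emb_{\mathcal L_s}(\mb X_s,\mb Y_s)=\emb_{\mathcal L_s^+}(\mb X_s^+,\mb Y_s^+)$. Moreover every member of $\mathcal{CT}_s^+$ is the expansion of its $\mathcal L_s$-reduct, which lies in $\mathcal{CT}_s$. Hence, for $\mb A_s^+,\mb B_s^+$, the relation $\mb C_s\to(\mb B_s)^{\mb A_s}_r$ from (2) translates verbatim, with the same embedding sets and colorings, into $\mb C_s^+\to(\mb B_s^+)^{\mb A_s^+}_r$.

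No step is a real obstacle. The only points that need any care are the local finiteness hypotheses and the identification of witnesses with substructures of the limits in part (2).
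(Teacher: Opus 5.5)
Your proposal is correct and follows the paper's proof step by step. Part (1) comes from Theorem~\ref{thm:F LFS}, the Fra\"{\i}ss\'e property (Proposition~\ref{prop:F homo}) and Corollary~\ref{cor:LFS partite construction}; part (2) from Proposition~\ref{prop:Ramsey witness transfer} with $\mb M_1=\CQ$, $\mb M_2=\SQ$ and $\aut(\CQ)=\aut(\SQ)$; and part (3) from Remark~\ref{remark:CQ and SQP}. The extra checks you add, namely local finiteness of both limits, irreducibility, and realizing $\mb C_F$ as a substructure of $\CQ$, are details the paper leaves implicit, and you handle them correctly.
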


\begin{proof}
    \begin{enumerate}
        \item $\mc{CT}_F$ is a \fr class (Proposition \ref{prop:F homo}).
        Thus the statement follows from Theorem \ref{thm:F LFS} and Corollary \ref{cor:LFS partite construction}.
        \item $F^{\CQ}$ is definable in $\SQ$ (Remark \ref{remark:SQ CQ}(2)), hence $\aut(\CQ) = \aut(\SQ)$ (Remark \ref{remark:def aut same}).
        Since $\SQ$ is ultrahomogeneous (Proposition \ref{prop:s homo}), we can use Proposition \ref{prop:Ramsey witness transfer} to transfer the Ramsey witness.
        \item Every element of $\mc{CT}_s$ has a unique expansion in $\mc{CT}_s^+$ and $\emb(\mb A_s^+, \mb B_s^+) = \emb(\mb A_s^+ |_{\mc L_s}, \mb B_s^+ |_{\mc L_s})$ (Remark \ref{remark:CQ and SQP}). \qedhere
    \end{enumerate}
\end{proof}

For the passage to $\mathcal{CT}_{\mathrm{str}}$ and $\mathcal{CT}$, we adapt ideas from \cite{TT12,KKS14}.
Proposition \ref{prop:str Ramsey} removes the level predicates by a finite iteration of Corollary \ref{cor:Shelah Ramsey}(3), followed by an application of the finite Ramsey theorem to the set of levels.
Lemma \ref{lemma:lex = len} and Proposition \ref{prop:convex treeable semilattice Ramsey} then remove level comparison by choosing expansions in which it coincides with the convex order.
These arguments transform the Ramsey witnesses obtained by Corollary \ref{cor:Shelah Ramsey} into Ramsey witnesses for the reduct classes.
Combined with the construction above, they yield the partite-construction-based procedure of obtaining Ramsey witnesses for $\mathcal{CT}$, summarized in Theorem \ref{thm:final Ramsey witness}.

\begin{definition}\label{def:height}
    \begin{enumerate}
        \item Let $\mathbf A \in \mc{CT}$ and $a_0 \in A$ be the $\lhd^{\mathbf A}$-minimum element (i.e., root) of $\mathbf A$.
        The {\em height of $a$ in $\mathbf A$}, denoted by $h_{\mathbf A}(a)$, is recursively defined:
        \begin{itemize}
            \item $h_{\mathbf A}(a_0) = 0$.
            \item If $h_{\mathbf A}(b) = m$, $b \lhd^{\mathbf A} a$ and there is no $c \in A$ such that $b \lhd^{\mathbf A} c \lhd^{\mathbf A} a$, then $h_{\mathbf A}(a) = m + 1$.
        \end{itemize}
        The {\em height of $\mathbf A$}, denoted by $h(\mb A)$, is the maximum of $\{h_{\mathbf A}(a): a \in A\}$.
        \item If $\mathbf A \in \mathcal{CT}_{\operatorname{str}}$ and $a_0 \in A$ is the root of $\mathbf A$, then again, the {\em height of $a$ in $\mathbf A$}, denoted by $h_{\mathbf A}(a)$, is recursively defined:
        \begin{itemize}
            \item $h_{\mb A}(a_0) = 0$.
            \item If $h_{\mb A}(b) = m$, $b <_{\len}^{\mb A} a$ and there is no $c \in A$ such that $b <_{\len}^{\mb A} c <_{\len}^{\mb A} a$, then $h_{\mb A}(a) = m + 1$.
        \end{itemize}
        The {\em height of $\mathbf A$}, denoted by $h(\mb A)$, is the maximum of $\{h_{\mathbf A}(a): a \in A\}$.
        \item For $\mb A \in \mathcal{CT}_s^+$ and $a \in A$, $h_{\mb A}(a)$ and $h(\mb A)$ are computed in $\mathbf A|_{\mc L_{\str}}$.
        \item For a nonempty $\mathbf A\in\mathcal{CT}$, the branching number $\operatorname{br}(\mathbf A)$ is the maximum number of immediate $\lhd^{\mathbf A}$-successors of a vertex of $A$.
        We use the same notation for expansions of $\mathbf A$, since this number depends only on the tree order.
    \end{enumerate}
\end{definition}

For $\mb A \in \mc{CT}_{\operatorname{str}}$ and $a, b \in A$, note that $h_{\mb A}(a)$ is not necessarily the same as $h_{\mb A|_{\mc L_0}}(a)$, and $h_{\mb A}(a) < h_{\mb A}(b)$ if and only if $a <_{\len}^{\mb A} b$.

\begin{proposition}\label{prop:str Ramsey}
    $\mathcal{CT}_{\operatorname{str}}$ is a Ramsey class.
\end{proposition}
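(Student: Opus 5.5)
Given $\mb A, \mb B \in \mc{CT}_{\str}$ and $r \ge 2$, I would first realize them as reducts of Shelah trees, then combine Corollary \ref{cor:Shelah Ramsey}(3) with the finite Ramsey theorem on levels. In $\mb A$, the relation $<_{\len}^{\mb A}$ is a strict weak order, so $A$ splits into $p = h(\mb A)+1$ classes indexed by $h_{\mb A}$. Similarly $B$ splits into $q = h(\mb B)+1$ classes.

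An expansion of $\mb A$ to $\mc{CT}_s^+$ is determined by an increasing sequence $\bar n = (n_0 < \cdots < n_{p-1})$: put $P_{n_j}$ on the elements of height $j$. Such an expansion $\mb A^{\bar n}$ is in $\mc{CT}_s^+$, since it embeds in $\SQP$ by placing nodes at the prescribed domain lengths. Conversely, every embedding of $\mb A$ into the $\mc L_{\str}$-reduct of a structure in $\mc{CT}_s^+$ determines a unique such $\bar n$, namely the levels it hits. So for $\mb C^+ \in \mc{CT}_s^+$ we get
$$\emb_{\mc L_{\str}}(\mb A, \mb C^+|_{\mc L_{\str}}) = \bigsqcup_{\bar n} \emb_{\mc L_s^+}(\mb A^{\bar n}, \mb C^+).$$
The same holds for $\mb B$ with sequences $\bar m$ of length $q$.

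The plan is to choose $N$ by the finite Ramsey theorem so that $N \to (q)^p_{r'}$ for a suitable number of colors $r'$. Fix the expansion $\mb B^{\bar m_0}$ with $\bar m_0 = (0,\ldots,q-1)$ and take a large tree. Then iterate Corollary \ref{cor:Shelah Ramsey}(3) once for each $p$-subset $\bar n$ of levels that could occur, a finite number $\binom{N}{p}$ of rounds. Each round produces $\mb C_{k+1} \to (\mb C_k)^{\mb A^{\bar n_k}}_r$, where $\mb C_k$ is viewed inside $\mc{CT}_s^+$ with its levels among $0,\ldots,N-1$.

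To make the levels used stay bounded, I would start from $\mb C_0 \in \mc{CT}_s^+$ containing a copy of $\mb B^{\bar m}$ for every $q$-subset $\bar m \subseteq N$. The full tree $d^{<N}$ with $d = \operatorname{br}(\mb B)$ large works; this matches Theorem \ref{thm:intro 1}. One must then check that each round can be kept within levels $<N$. Here I would use that $\mb C_{k+1}$ may be replaced by $\hull{C_{k+1}}{\SQP}$. I would relabel levels only through the order type of the levels actually occurring, collapsing unused levels while preserving $<_{\len}$. This collapsing changes which $\bar n$ an embedding has, which is a subtlety to handle.

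Given a coloring $\chi$ of $\emb(\mb A, \mb C|_{\mc L_{\str}})$, backward induction over the rounds yields a copy of $\mb C_0$ inside $\mb C$. On this copy, the color of an $\mb A$-embedding depends only on its level set $\bar n$ and on which $\mb A^{\bar n}$-class it lies in. Actually it is cleaner to say: on the copy, $\chi(\alpha)$ depends only on $\bar n(\alpha)$, giving a coloring of $[N]^p$. Finite Ramsey then gives a homogeneous $q$-set $\bar m$, and the copy of $\mb B^{\bar m}$ in $\mb C_0$ is monochromatic. The copy is $\mc L_{\str}$-isomorphic to $\mb B$, and all of its $\mb A$-embeddings have $\bar n \subseteq \bar m$.

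I expect the main obstacle to be the level bookkeeping. Passing through Corollary \ref{cor:Shelah Ramsey}(3) and then taking a copy of $\mb C_k$ inside $\mb C_{k+1}$ must preserve the actual levels of elements. Otherwise the reduction to a coloring of $[N]^p$ breaks. An $\mc L_s^+$-embedding preserves each $P_n$, so the copies of $\mb C_0$ carry the same level labels. Hence it suffices that all $\mb C_k$ are genuine $\mc{CT}_s^+$-structures, without collapsing. In that case the finite Ramsey step is applied to the level sets occurring in $\mb C_0$, namely subsets of $N$. I would therefore drop the collapsing idea and rely solely on the fact that embeddings preserve $P_n$.
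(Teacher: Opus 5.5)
Your proposal is correct and is essentially the paper's proof: you expand $\mathbf A$ and $\mathbf B$ by level sets, start from the natural tree $d^{<N}$ with $N \to (q)^p_r$ (so $r'=r$ suffices), iterate Corollary \ref{cor:Shelah Ramsey}(3) once for each $p$-subset of $N$, pull back to a copy of $d^{<N}$ on which the color depends only on the level set, and finish with the finite Ramsey theorem. You were right to drop the collapsing idea: $\mathcal L_s^+$-embeddings preserve each $P_n$, so only embeddings landing in the copy of $d^{<N}$ matter, and no further level bookkeeping is needed (the paper's remark that the levels of $\mathbf D_m$ may be assumed to lie in $N$ is likewise only a convenience).
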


\begin{proof}
    Recall that $\mathcal{L}_s^+ = \mathcal{L}_{\operatorname{str}} \cup \{P_n: n \in \omega\} = \mathcal{L}_s \cup \{<_{\operatorname{len}}\}$.
    Let $2 \le r \in \omega$, $\mathbf A, \mathbf B \in \mathcal{CT}_{\operatorname{str}}$, $k = h(\mb A) + 1$, and $n = h(\mb B) + 1$.
    By (finite) Ramsey's theorem on linear orders, there is $N \in \omega$ such that $N \rightarrow (n)^k_r$.
    Let ${N \choose k}$ denote the set of all subsets of $N$ of size $k$.
    For each $S = \{s_0 < \cdots < s_{k-1}\}\in {N \choose k}$, let $\mathbf A_S$ be the $\mathcal{L}_s^+$-expansion of $\mathbf A$, such that every element of $\mathbf A$ with height $i$ has level $P_{s_i}$.
    Likewise, for each $n$-subset $T = \{t_0 < \cdots < t_{n-1}\}$ of $N$, let $\mathbf B_T$ be the corresponding $\mathcal{L}_s^+$-expansion.

    Put $d = \max\{\operatorname{br}(\mathbf A), \operatorname{br}(\mathbf B), 1\}$.
    Let $\mathbf D_0$ be the natural $\mathcal L_s^+$-structure
    on $d^{<N}$, where $P_j$ consists of the sequences of length $j$, so that $\mb D_0 \in \mc{CT}_s^+$.
    Then every $\mathbf A_S$ with $S\in {N\choose k}$ and every $\mathbf B_T$ with $T\in {N\choose n}$ embeds into $\mathbf D_0$.
    Enumerate ${N \choose k}$ by $S_0, \ldots, S_{m-1}$.
    Since $\mathcal{CT}_s^+$ is a Ramsey class, we can inductively find $\mathbf D_i \in \mathcal{CT}_s^+$ such that $\mathbf D_{i+1} \rightarrow (\mathbf D_i)^{\mathbf A_{S_i}}_r$ for each $i < m$.
    Note that we may assume that the set of levels of $\mathbf D_m$ is $\{P_k: k < N\}$.
    We claim that $\mathbf C = \mathbf D_m |_{\mathcal{L}_{\operatorname{str}}}$ satisfies $\mathbf C \rightarrow (\mathbf B)^{\mathbf A}_r$.
    
    Let $\chi: \operatorname{Emb}_{\mathcal{L}_{\operatorname{str}}}(\mathbf A, \mathbf C) \rightarrow r$ be any $r$-coloring.
    For each $\alpha \in \operatorname{Emb}_{\mathcal{L}_{\operatorname{str}}}(\mathbf A, \mathbf C)$, there is $S_i$ such that the set of levels of $\alpha(A)$ in $\mathbf D_m$ is exactly $\{P_k: k \in S_i\}$.
    Thus for each $i < m$, we can define $\chi_i: \operatorname{Emb}_{\mathcal{L}_s^+}(\mathbf A_{S_i}, \mathbf D_m) \rightarrow r$ by $\chi_i(\alpha) = \chi(\alpha)$ (note that $\operatorname{Emb}_{\mathcal{L}_s^+}(\mathbf A_{S_i}, \mathbf D_m) \subseteq \operatorname{Emb}_{\mathcal{L}_{\operatorname{str}}}(\mathbf A, \mathbf C)$).
    Then by the construction of $\mathbf D_m$, there is $\delta \in \operatorname{Emb}_{\mathcal{L}_s^+}(\mathbf D_0, \mathbf D_m)$ such that $|\chi_i(\delta \circ \operatorname{Emb}_{\mathcal{L}_s^+}(\mathbf A_{S_i}, \mathbf D_0))| \le 1$ for all $i < m$.
    Let $\chi': {N \choose k} \rightarrow r$ be given by $\chi'(S_i) =  \chi_i(\delta \circ \alpha)$ for some (any) $\alpha \in \operatorname{Emb}_{\mathcal{L}_s^+}(\mathbf A_{S_i}, \mathbf D_0)$.
    Then by the choice of $N$, there is $T \in {N \choose n}$ such that $\left|\chi'\left({T \choose k}\right)\right| \le 1$.
    Finally, by the choice of $\mathbf D_0$, there is $\beta_T \in \operatorname{Emb}_{\mathcal{L}_s^+}(\mathbf B_T, \mathbf D_0)$.
    Observe that for any $\alpha_1, \alpha_2 \in \operatorname{Emb}_{\mathcal{L}_{\operatorname{str}}}(\mathbf A, \mathbf C)$, the corresponding $\mathcal{L}_s^+$-expansions of $\alpha_1(\mathbf A)$ and $\alpha_2(\mathbf A)$ in $\mathbf D_m$ are isomorphic as $\mathcal{L}_s^+$-structures if and only if the sets of levels of $\alpha_1(\mathbf A)$ and $\alpha_2(\mathbf A)$ in $\mathbf D_m$ are the same.
    For any $\alpha \in \operatorname{Emb}_{\mathcal{L}_{\operatorname{str}}}(\mathbf A, \mathbf B)$, let $S_i \in {T \choose k} \subseteq {N \choose k}$ be the set of levels occupied by $\beta_T \circ \alpha(A)$.
    Then we have $\beta_T \circ \alpha \in \emb_{\mc L_s^+}(\mb A_{S_i}, \mb D_0)$ and $\chi(\delta \circ \beta_T \circ \alpha) = \chi_i(\delta \circ \beta_T \circ \alpha) = \chi'(S_i) \in \chi'\left({T \choose k}\right)$.
\end{proof}

\begin{lemma}\label{lemma:lex = len}
    For every $\mathbf A \in \mathcal{CT}$, there is $\mathbf A' \in \mc{CT}_{str}$ such that $\mb A'|_{\mc L_0} = \mb A$ and for any $a, b \in A' (= A)$, $a <^{\mathbf A'} b$ if and only if $a <_{\operatorname{len}}^{\mathbf A'} b$.
\end{lemma}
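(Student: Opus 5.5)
The plan is to realize $<_{\operatorname{len}}$ by a level assignment that simply follows the order $<^{\mathbf A}$, and then pass through $\mathcal{CT}_s$ and its unique expansion to $\mathcal{CT}_s^+$. By Definition \ref{def: class of finite Shelah trees}(4), $\mathbf A$ is a finite convexly ordered treeable meet semilattice in which $<^{\mathbf A}$ is a linear order extending $\lhd^{\mathbf A}$. Enumerate $A$ in $<^{\mathbf A}$-increasing order as $a_0 <^{\mathbf A} a_1 <^{\mathbf A} \cdots <^{\mathbf A} a_{m-1}$. Define the $\mathcal L_s$-expansion $\mathbf A_s$ of $\mathbf A$ by putting $a_i \in P_i$ for each $i<m$, with all other $P_n$ empty.

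First, I check that $\mathbf A_s$ is a Shelah tree, i.e., that it is leveled in the sense of Definition \ref{def:semilattice}(4). Each element lies in exactly one $P_n$ by construction. If $a_i \lhd^{\mathbf A} a_j$, then $a_i <^{\mathbf A} a_j$ because $<$ extends $\lhd$, so $i<j$. Treeability and convexity are inherited from $\mathbf A$. There is no constraint on the level of the root, and none on gaps between levels (Remark \ref{remark:Shelah tree}(1),(3)). Hence $\mathbf A_s \in \mathcal{CT}_s$.

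Next, by Remark \ref{remark:SQ CQ}(1), $\mathcal{CT}_s = \age(\SQ)$. So $\mathbf A_s$ embeds into $\SQ$, and by Remark \ref{remark:CQ and SQP} it has a unique expansion $\mathbf A_s^+ \in \mathcal{CT}_s^+ = \age(\SQP)$. In this expansion, $a <_{\operatorname{len}} b$ holds exactly when the level of $a$ is smaller than the level of $b$. Put $\mathbf A' = \mathbf A_s^+|_{\mathcal L_{\operatorname{str}}}$. Then $\mathbf A' \in \mathcal{CT}_{\operatorname{str}}$ by Definition \ref{def:class of Shelah trees with function}(4), and $\mathbf A'|_{\mathcal L_0} = \mathbf A$. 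Finally, $a_i <_{\operatorname{len}}^{\mathbf A'} a_j$ iff $i<j$ iff $a_i <^{\mathbf A'} a_j$, which is the required equivalence.

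The only step with real content is the membership $\mathbf A_s \in \age(\SQ)$, and this is exactly Remark \ref{remark:SQ CQ}(1), which we are allowed to assume. If one wanted to verify it directly, I would embed $\mathbf A_s$ into $\SQ$ by recursion along $\lhd^{\mathbf A}$. Send the root $a_0$ to the constant-$0$ sequence of length $0$. Then send each $a_j$ with immediate predecessor $a_i$ to an extension of the image of $a_i$ of length $j$. Choose the new coordinate at position $\dom(\text{image of }a_i)$ so that the rational labels of the immediate successors of $a_i$ respect $<^{\mathbf A}$, padding the remaining coordinates with $0$. Convexity of $<^{\mathbf A}$ then makes the lexicographic order of $\SQ$ agree with $<^{\mathbf A}$, and meets are preserved because distinct immediate successors of $a_i$ receive distinct labels at the branching coordinate.
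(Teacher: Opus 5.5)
Your proposal is correct and follows the paper's proof: enumerate $A$ in $<^{\mathbf A}$-increasing order, put $a_i$ alone in $P_i$, take the unique expansion to $\mathcal{CT}_s^+$, and restrict to $\mathcal L_{\operatorname{str}}$. Your explicit check of the leveling condition (using $\lhd \subseteq <$) and your optional direct embedding into $\SQ$ only fill in details that the paper leaves implicit.
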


\begin{proof}
    Enumerate $A$ as $a_0<^{\mathbf A}\cdots<^{\mathbf A}a_{n-1}$.
    Expand $\mathbf A$ to an $\mathcal L_s$-structure $\mathbf A_s$ by setting
    \[ P_i^{\mathbf A_s} =
    \begin{cases}
    \{a_i\}, & i<n,\\
    \emptyset, & i\ge n.
    \end{cases} \]
    Then $\mathbf A_s$ is a Shelah tree.
    Let $\mathbf A_s^{+}\in\mathcal{CT}_s^+$ be its unique expansion, and put $\mathbf A' =\mathbf A_s^{+}|_{\mathcal L_{str}}$.
    Then $<_{\mathrm{len}}^{\mathbf A'}=<^{\mathbf A'}$, as required.
\end{proof}

\begin{proposition}\label{prop:convex treeable semilattice Ramsey}
    $\mathcal{CT}$ is a Ramsey class.
\end{proposition}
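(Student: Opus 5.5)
We deduce the Ramsey property of $\mathcal{CT}$ from that of $\mathcal{CT}_{\str}$ (Proposition \ref{prop:str Ramsey}), using the expansions of Lemma \ref{lemma:lex = len} in which $<_{\len}$ coincides with $<$. Let $2 \le r \in \omega$ and $\mathbf A, \mathbf B \in \mathcal{CT}$. By Lemma \ref{lemma:lex = len}, choose $\mathbf B' \in \mathcal{CT}_{\str}$ with $\mathbf B'|_{\mathcal L_0} = \mathbf B$ and $<_{\len}^{\mathbf B'} = <^{\mathbf B'}$. Put $\mathbf A' = \mathbf B'|_{\alpha_0(A)}$ for some fixed $\alpha_0 \in \emb(\mathbf A, \mathbf B)$; if there is no such embedding, the statement is trivial. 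Then $\mathbf A'$ is an $\mathcal L_{\str}$-expansion of (a copy of) $\mathbf A$ with $<_{\len} = <$. This condition is inherited by substructures: $\mathcal{CT}_{\str}$ is closed under $\sqcap$-substructures, and $<_{\len}$ is simply restricted. By Proposition \ref{prop:str Ramsey}, take $\mathbf C' \in \mathcal{CT}_{\str}$ with $\mathbf C' \to (\mathbf B')^{\mathbf A'}_r$, and set $\mathbf C = \mathbf C'|_{\mathcal L_0} \in \mathcal{CT}$.

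The key observation is that the $\mathcal L_{\str}$-expansion $\mathbf A'$ of $\mathbf A$ with $<_{\len} = <$ is unique. Indeed, $<_{\len}$ is then the strict linear order $<$, so there is exactly one such expansion. Hence for every $\alpha \in \emb_{\mathcal L_0}(\mathbf A, \mathbf B)$ we have $\alpha \in \emb_{\mathcal L_{\str}}(\mathbf A', \mathbf B')$. The reason is that $\alpha$ preserves $<$, and on both sides $<_{\len}$ equals $<$. Conversely, every $\mathcal L_{\str}$-embedding is an $\mathcal L_0$-embedding. Thus $\emb_{\mathcal L_0}(\mathbf A, \mathbf B) = \emb_{\mathcal L_{\str}}(\mathbf A', \mathbf B')$, after identifying $\mathbf A$ with its copy.

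Given $\chi : \emb_{\mathcal L_0}(\mathbf A, \mathbf C) \to r$, restrict it to $\emb_{\mathcal L_{\str}}(\mathbf A', \mathbf C')$, which is a subset. This yields $\beta \in \emb_{\mathcal L_{\str}}(\mathbf B', \mathbf C') \subseteq \emb_{\mathcal L_0}(\mathbf B, \mathbf C)$ such that $\beta \circ \emb_{\mathcal L_{\str}}(\mathbf A', \mathbf B')$ is monochromatic. By the equality of the previous paragraph, this set is exactly $\beta \circ \emb_{\mathcal L_0}(\mathbf A, \mathbf B)$, so $\mathbf C \to (\mathbf B)^{\mathbf A}_r$.

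The main point to check carefully is the equality of embedding sets. It requires that $\mathcal L_0$-embeddings into $\mathbf B'$ automatically respect $<_{\len}$, and this holds precisely because $<_{\len}^{\mathbf B'} = <^{\mathbf B'}$. Embeddings into $\mathbf C'$ need not respect $<_{\len}$, and this is harmless because we only restrict the coloring there. One also has to check that $\mathbf A'$ really lies in $\mathcal{CT}_{\str}$. This holds since it is a substructure of $\mathbf B'$: it is closed under $\sqcap$ because $\alpha_0$ is an $\mathcal L_0$-embedding, and $\mathcal{CT}_{\str}$ is hereditary as a reduct of the age $\mathcal{CT}_s^+$.
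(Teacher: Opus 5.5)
Your proof is correct and is essentially the paper's argument: expand to $\mathcal{CT}_{\str}$ so that $<_{\len}$ coincides with $<$, apply Proposition~\ref{prop:str Ramsey}, pass to the $\mathcal L_0$-reduct, and use $\emb_{\mathcal L_0}(\mathbf A,\mathbf B)=\emb_{\mathcal L_{\str}}(\mathbf A',\mathbf B')$ while restricting the coloring. The only difference is that you obtain $\mathbf A'$ as a substructure of $\mathbf B'$ via $\alpha_0$, whereas the paper applies Lemma~\ref{lemma:lex = len} to $\mathbf A$ directly; by the uniqueness you point out, the two agree up to $\alpha_0$, so this detour and the separate case $\emb(\mathbf A,\mathbf B)=\emptyset$ are unnecessary.
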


\begin{proof}
    Let $2 \le r \in \omega$, $\mathbf A, \mathbf B \in \mathcal{CT}$, and let $\mathbf A', \mathbf B' \in \mathcal{CT}_{str}$ be given by Lemma \ref{lemma:lex = len}.
    By Proposition \ref{prop:str Ramsey}, there is $\mathbf C^* \in \mathcal{CT}_{str}$ such that $\mathbf C^* \rightarrow (\mathbf B')^{\mathbf A'}_r$.
    We claim that $\mathbf C:= \mathbf C^* |_{\mathcal{L}_0}$ satisfies $\mathbf C \rightarrow (\mathbf B)^{\mathbf A}_r$.

    Let $\chi: \operatorname{Emb}_{\mathcal{L}_0}(\mathbf A, \mathbf C) \rightarrow r$, and define $\chi': \operatorname{Emb}_{\mathcal{L}_{\operatorname{str}}}(\mathbf A', \mathbf C^*) \rightarrow r$ by $\chi'(\alpha) = \chi(\alpha)$.
    Then there is $\beta' \in \operatorname{Emb}_{\mathcal{L}_{\operatorname{str}}}(\mathbf B', \mathbf C^*)$ such that $|\chi'(\beta' \circ \operatorname{Emb}_{\mathcal{L}_{\operatorname{str}}}(\mathbf A', \mathbf B'))| \le 1$.
    Since $<_{\mathrm{len}}^{\mathbf A'}=<^{\mathbf A'} = <^{\mb A}$
    and $<_{\mathrm{len}}^{\mathbf B'}=<^{\mathbf B'} = <^{\mb B}$, $\operatorname{Emb}_{\mathcal L_0}(\mathbf A,\mathbf B) = \operatorname{Emb}_{\mathcal L_{str}}(\mathbf A',\mathbf B')$.
    Thus for any $\alpha \in \operatorname{Emb}_{\mathcal{L}_0}(\mathbf A, \mathbf B)$, $\chi(\beta' \circ \alpha) = \chi'(\beta' \circ \alpha) \in \chi'(\beta' \circ \operatorname{Emb}_{\mathcal{L}_{\operatorname{str}}}(\mathbf A', \mathbf B'))$.
\end{proof}

\begin{theorem}\label{thm:final Ramsey witness}
    By the proofs of Propositions \ref{prop:str Ramsey} and \ref{prop:convex treeable semilattice Ramsey}, given $2 \le r \in \omega$ and $\mathbf A, \mathbf B \in \mathcal{CT}$, we can find $\mathbf C \in \mathcal{CT}$ satisfying $\mathbf C \rightarrow (\mathbf B)^{\mathbf A}_r$ as follows;
    in the following item (4), note that $\mb D_1, \ldots, \mb D_m \in \mc{CT}^+_s$ are obtained from the iterated partite construction (Corollary \ref{cor:Shelah Ramsey}(1)--(3)).
    \begin{enumerate}
        \item Expand $\mathbf A, \mathbf B$ to $\mathbf A', \mathbf B' \in \mathcal{CT}_{\operatorname{str}}$, so that for any $a, b \in A$, $a <^{\mathbf A'} b$ if and only if $a <^{\mathbf A'}_{\len} b$ (Lemma \ref{lemma:lex = len}), and likewise for $\mathbf B'$.
        \item Let $|A| = k (=h(\mb A') + 1)$, $|B| = n (=h(\mb B') + 1)$, and $N \rightarrow (n)^k_r$.
        Expand $\mathbf A'$ to $\mathbf A'_{S_0}, \ldots, \mathbf A'_{S_{m-1}} \in \mathcal{CT}_s^+$, where $\{S_i: i < m\} = {N \choose k}$.
        Similarly, expand $\mathbf B'$ to $\{\mathbf B'_T: T \in {N \choose n}\}$.
        \item Let $\mathbf D_0$ be the natural $\mc L_s^+$-structure with universe $d^{< N}$, where 
        $$d = \max\{\operatorname{br}(\mathbf A), \operatorname{br}(\mathbf B),1 \}.$$
        \item Using Corollary \ref{cor:Shelah Ramsey}(3), choose $\mathbf D_1,\ldots,\mathbf D_m\in\mathcal{CT}_s^+$ such that $\mathbf D_{i+1} \rightarrow (\mathbf D_i)^{\mathbf A'_{S_i}}_r$ for every $i < m$.
        \item $\mathbf C := \mathbf D_m |_{\mathcal{L}_0}$ satisfies $\mathbf C \rightarrow (\mathbf B)^{\mathbf A}_r$.
    \end{enumerate}
\end{theorem}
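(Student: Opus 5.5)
Theorem \ref{thm:final Ramsey witness} is essentially the composition of Lemma \ref{lemma:lex = len}, Proposition \ref{prop:str Ramsey} and Proposition \ref{prop:convex treeable semilattice Ramsey}, with Corollary \ref{cor:Shelah Ramsey} supplying the witnesses at each stage. The proof therefore checks that the five listed steps instantiate those proofs correctly, and that each intermediate witness comes from the partite construction.

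The steps, in order:

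1. Apply Lemma \ref{lemma:lex = len} to $\mathbf A$ and $\mathbf B$, obtaining $\mathbf A', \mathbf B' \in \mathcal{CT}_{\operatorname{str}}$ in which $<_{\len}$ coincides with $<$.
2. In $\mathbf A'$ the elements occupy pairwise distinct $<_{\len}$-levels, namely one per element. Hence the heights of Definition \ref{def:height}(2) are $0, \ldots, |A|-1$, so $h(\mathbf A')+1 = |A| = k$, and likewise $h(\mathbf B')+1 = |B| = n$. These are exactly the $k, n$ used in the proof of Proposition \ref{prop:str Ramsey} applied to $\mathbf A', \mathbf B'$, and Ramsey's theorem gives $N$ with $N \to (n)^k_r$.
3. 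The branching numbers of $\mathbf A'$, $\mathbf B'$ equal those of $\mathbf A$, $\mathbf B$, since they depend only on $\lhd$. So the $d$ and the structure $\mathbf D_0$ on $d^{<N}$ in item (3) agree with those in that proof. One should verify here that every $\mathbf A'_S$ and $\mathbf B'_T$ embeds into $\mathbf D_0$. This holds because each node at height $i$ can be sent to a sequence of length $s_i$. The children of a node go to distinct extensions, ordered lexicographically and with intermediate coordinates chosen to be $0$. At most $d$ children are needed, and their incomparability is preserved because distinct children branch at the first new coordinate.

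For item (4), Corollary \ref{cor:Shelah Ramsey}(3) gives $\mathbf D_{i+1} \to (\mathbf D_i)^{\mathbf A'_{S_i}}_r$. This witness is the unique $\mathcal L_s^+$-expansion of a witness in $\mathcal{CT}_s$ which, by Corollary \ref{cor:Shelah Ramsey}(2), is $\hull{C_F}{\SQ}$ for a $\mathbf C_F$ built by the iterated partite construction and completed via Theorem \ref{thm:F LFS} and Corollary \ref{cor:LFS partite construction}. This justifies the remark that $\mathbf D_1, \ldots, \mathbf D_m$ arise from the partite construction.

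The proof of Proposition \ref{prop:str Ramsey} then shows that $\mathbf D_m|_{\mathcal L_{\operatorname{str}}} \to (\mathbf B')^{\mathbf A'}_r$. The proof of Proposition \ref{prop:convex treeable semilattice Ramsey} shows that its $\mathcal L_0$-reduct $\mathbf C$ satisfies $\mathbf C \to (\mathbf B)^{\mathbf A}_r$, using $\emb_{\mathcal L_0}(\mathbf A,\mathbf B)=\emb_{\mathcal L_{\operatorname{str}}}(\mathbf A',\mathbf B')$. Finally, $\mathbf C \in \mathcal{CT}$ because $\mathbf D_m \in \mathcal{CT}_s^+$ and $\mathcal{CT}$ is defined as the class of $\mathcal L_0$-reducts.

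The main obstacle is a detail in Proposition \ref{prop:str Ramsey}: the assumption that $\mathbf D_m$ uses only the levels $P_0, \ldots, P_{N-1}$. This is needed so that every $\mathcal L_{\operatorname{str}}$-embedding of $\mathbf A'$ lands on a level set $S_i \in {N \choose k}$, which makes the colorings $\chi_i$ cover all embeddings. I would handle it by replacing $\mathbf D_m$ with its substructure on the elements of levels below $N$. Since the levels are downward closed along $\lhd$, this set is closed under $\sqcap$, hence is a substructure. It still contains the image of the relevant copies of $\mathbf D_{m-1}$, and inductively of $\mathbf D_0$, because all the $\mathbf A'_{S_i}$ and $\mathbf D_i$ live on levels below $N$. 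The arrow relations are preserved under this restriction, since restricting the ambient structure to a substructure containing all the relevant images only shrinks the set of colorings to consider.
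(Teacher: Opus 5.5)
Your proposal is correct and is essentially the paper's own argument: the theorem is the composition of Lemma \ref{lemma:lex = len} with the proofs of Propositions \ref{prop:str Ramsey} and \ref{prop:convex treeable semilattice Ramsey}, with Corollary \ref{cor:Shelah Ramsey}(3) supplying $\mathbf D_1,\ldots,\mathbf D_m$, and your check that $k=|A|$, $n=|B|$, together with your explicit embedding of each $\mathbf A'_S,\mathbf B'_T$ into $d^{<N}$, fills in steps the paper only asserts. Your ``main obstacle'' is not actually needed: $\delta\in\emb_{\mathcal L_s^+}(\mathbf D_0,\mathbf D_m)$ preserves every $P_j$, so the only embeddings that matter, the maps $\delta\circ\beta_T\circ\alpha$, automatically have level sets in ${T\choose k}\subseteq{N\choose k}$, while embeddings of $\mathbf A'$ at other levels are simply never used. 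If you restrict anyway, note that Corollary \ref{cor:Shelah Ramsey}(3) does not guarantee that the $\mathbf D_i$ live below level $N$, so you must restrict every $\mathbf D_i$ to its levels below $N$, not just $\mathbf D_m$; level preservation then shows that the arrows $\mathbf D_{i+1}\to(\mathbf D_i)^{\mathbf A'_{S_i}}_r$ survive, and the resulting arrow passes up to the superstructure $\mathbf D_m|_{\mathcal L_0}$.
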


\section{Ramsey properties of convexly ordered $C$-relations}\label{sec:C-rel}

In this section, we use the subscript $T$ for tree structures and embeddings between them, such as $\mathbf A_T$ and $\alpha_T$, to distinguish them from the leaf structures ($C$-relations) and their embeddings.

$C$-relations give an axiomatic setting of the ternary relation induced on the leaves of a tree.
We use the canonical-tree representation in \cite[Proposition 2.4]{DM24}, a version of the representation theorem of Adeleke and Neumann \cite{AN98}.
Starting from the Ramsey witnesses provided by Theorem \ref{thm:final Ramsey witness}, we first obtain Ramsey witnesses for finite convexly ordered $C$-relations with arbitrary branching.
We then pass to binary branching by refining the witnessing tree without changing its leaves.
The binary branching Ramsey theorem is known \cite[Theorem 1]{BP10}, \cite[Theorem 5.1]{Bod15};
the purpose here is to derive it by an explicit transformation of the witnesses obtained from the partite construction.

\begin{definition}\label{def:C-rel}
    A ternary relation $C$ on a set $M$ is said to be a {\em $C$-relation} if for any $a, b, c, d \in M$,
    \begin{enumerate}
        \item $C(a, b, c) \leftrightarrow C(a, c, b)$,
        \item $C(a, b, c) \rightarrow \neg C(b, a, c)$,
        \item $C(a, b, c) \rightarrow C(a, d, c) \vee C(d, b, c)$, and
        \item $a \ne b \rightarrow C(a, b, b)$.
    \end{enumerate}
    A $C$-relation is called {\em binary branching} if it additionally satisfies
    \begin{enumerate}
        \item[(5)] $a \ne b \wedge a \ne c \wedge b \ne c \rightarrow C(a, b, c) \vee C(b, a, c) \vee C(c, a, b)$.
    \end{enumerate}
    A $C$-relation is {\em convexly ordered} by $<$ if $<$ is a linear order and satisfies
    \begin{enumerate}
        \item[(6)] $C(a, b, c) \rightarrow ((a < b) \wedge (a < c)) \vee ((b < a) \wedge (c < a))$.
    \end{enumerate}
\end{definition}

\begin{definition}\label{def:language of C-rel}
    \begin{enumerate}
        \item $\mc L_C = \{C\}$ and $\mc{L}^<_C = \{C, <\}$, where $C$ is a ternary relation symbol and $<$ is a binary relation symbol.
        \item If $\mb M$ is an $\mc L_C$-structure satisfying (1)--(4) of Definition \ref{def:C-rel}, then we say that $\mb M$ is a {\em C-relation with arbitrary branching}.
        For brevity, we may simply say that $\mb M$ is a {\em $C$-relation}.
        \item If $\mb M$ is a $C$-relation satisfying Definition \ref{def:C-rel}(5), then it is called a {\em binary branching $C$-relation}.
        \item If $\mb M$ is an $\mc L^<_C$-structure, $\mb M |_{\mc L_C}$ is a $C$-relation, $<^{\mb M}$ is a linear order and $\mb M$ satisfies Definition \ref{def:C-rel}(6), then $\mb M$ is called a {\em convexly ordered $C$-relation}.
    \end{enumerate}
\end{definition}

\begin{definition}\label{def:leaves of trees}
    Let $\mb A_T \in \mc{CT}$.
    \begin{enumerate}
        \item $L(A_T)$ is the set of leaves ($\lhd^{\mb A_T}$-maximal elements) of $\mb A_T$.
        \item $L(\mb A_T)$ is an $\mc{L}^<_C$-structure on $L(A_T)$, defined as follows.
        For any $a, b, c \in L(A_T)$,
        \begin{enumerate}
            \item $L(\mb A_T) \models C(a, b, c)$ if and only if $\mb A_T \models \sqcap(a, b) = \sqcap(a, c) \lhd \sqcap(b, c)$.
            \item $L(\mb A_T) \models a < b$ if and only if $\mb A_T \models a < b$.
        \end{enumerate}
    \end{enumerate}
\end{definition}

A direct verification shows that for any $\mb A_T \in \mc{CT}$, $L(\mb A_T)$ is a convexly ordered $C$-relation on $L(A_T)$.
Also, for any $a, b, c \in L(\mb A_T)$, $\mb A_T \models \sqcap(a, b) = \sqcap(a, c) \lhd \sqcap(b, c)$ if and only if $\mb A_T \models \sqcap(a, \sqcap(b, c)) \lhd \sqcap(b, c)$.

\begin{definition}\label{def:class of C-rel}
    \begin{enumerate}
        \item $\mc C$ is the class of all finite C-relations.
        \item $\mc C^<$ is the class of all finite convexly ordered $C$-relations.
        \item $\mc C_{\bin}$ is the class of all finite binary branching C-relations.
        \item $\mc C^<_{\bin}$ is the class of all finite convexly ordered binary branching C-relations.
    \end{enumerate}
\end{definition}

For each $\mb A \in \mc C^<$, we can canonically define $T(\mb A) \in \mc{CT}$ as follows.
For details, see \cite[Proposition 2.4]{DM24} and \cite{AN98}.
Define a preorder $\preccurlyeq$ on $A^2$:
\begin{center}
    $(a, b) \preccurlyeq (c, d)$ if and only if ($\neg C(c, a, b)$ and $\neg C(d, a, b)$).
\end{center}
Define an equivalence relation $\equiv_C$ on $A^2$ by
\begin{center}
    $(a, b) \equiv_C (c, d)$ if and only if 
    $\left((a, b) \preccurlyeq (c, d) \text{ and } (c, d) \preccurlyeq (a, b)\right)$.
\end{center}
Let $T(A) := A^2 / \equiv_C$ and denote $(a, b) / \equiv_C$ by $[a, b]$.
Define
\begin{center}
    $[a, b] \lhd^{T(\mb A)} [c, d]$ if and only if $(a, b) \prec (c, d)$,
\end{center}
and $\sqcap^{T(\mb A)}([a, b], [c, d])$ is their $\unlhd^{T(\mb A)}$-greatest lower bound.
Then $T(\mb A)$ can be regarded as a meet semilattice (Definition \ref{def:semilattice});
$L(T(A)) = \{[a, a]: a \in A\}$ and $[a, b] = \sqcap^{T(\mb A)}([a, a], [b, b])$.

For each $a \in T(A)$, let $X_a := \{x \in A: a \unlhd^{T(\mb A)} [x, x]\}$.
If $a, b$ are $\unlhd^{T(\mb A)}$-comparable, define $a<^{T(\mathbf A)}b$ if and only if $a\lhd^{T(\mathbf A)}b$.
If $a, b$ are not $\unlhd^{T(\mb A)}$-comparable, by the convexity of $<^{\mb A}$, $X_a$ and $X_b$ are disjoint nonempty $<^{\mathbf A}$-intervals.
Thus exactly one of the following holds.
\begin{itemize}
    \item $x <^{\mb A} y$ for all $(x, y) \in X_a \times X_b$; in this case, define $a <^{T(\mb A)} b$.
    \item $x >^{\mb A} y$ for all $(x, y) \in X_a \times X_b$; let $a >^{T(\mb A)} b$.
\end{itemize}
Then $T(\mb A) \in \mc{CT}$.
$\iota_{\mb A}: \mb A \rightarrow L(T(\mb A))$ defined by
$$\iota_{\mb A}(a) = [a, a]$$
is an isomorphism.

For any $\alpha \in \emb(\mb A, \mb B)$, by \cite[Proposition 2.4]{DM24} and because $\alpha$ preserves the convex order on $\mb A$, there is $T(\alpha) \in \emb(T(\mb A), T(\mb B))$ such that
\begin{center}
    $T(\alpha)([a, b]) = [\alpha(a), \alpha(b)]$.
\end{center}

\begin{observation}\label{obs:tree*}
    For each $\mb A_T \in \mc{CT}$, define the following map $*_{\mb A_T}:A_T \rightarrow L(A_T)$.
    For each $a \in A_T$, $*_{\mb A_T}(a)$ is the $<^{\mb A_T}$-least element of $\{x \in A_T: a \unlhd^{\mb A_T} x$ and $x$ is $\lhd^{\mb A_T}$-maximal in $\mb A_T\}$.
    It is straightforward to see that, for any two $\unlhd^{\mathbf A_T}$-incomparable $a, b\in A_T$,
    \begin{enumerate}
        \item $\mb A_T \models \sqcap(*_{\mb A_T}(a), *_{\mb A_T}(b)) = \sqcap(a, b)$, and
        \item $\mb A_T \models a < b \leftrightarrow *_{\mb A_T}(a) < *_{\mb A_T}(b)$.
    \end{enumerate}
\end{observation}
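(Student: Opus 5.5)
We work in a fixed $\mb A_T \in \mc{CT}$ and write $*$ for $*_{\mb A_T}$. First we check that $*$ is well defined: for $a \in A_T$ the set of leaves above $a$ is nonempty, since $A_T$ is finite and any $\unlhd$-maximal element above $a$ is a leaf. As $<^{\mb A_T}$ is linear, this set has a $<$-least element. By definition, $a \unlhd *(a)$ for every $a$. Now fix $\unlhd$-incomparable $a, b$ and put $c = \sqcap(a,b)$. Then $c \lhd a$ and $c \lhd b$ strictly.

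For (1), we use treeability. The $\unlhd$-predecessors of $*(a)$ form a chain, and this chain contains both $a$ and $c$. Set $c' = \sqcap(*(a), *(b))$. Since $c \unlhd a \unlhd *(a)$ and $c \unlhd b \unlhd *(b)$, $c$ is a common lower bound, so $c \unlhd c'$. Conversely, $c'$ and $a$ both lie in the chain below $*(a)$, so they are comparable. If $a \unlhd c'$, then $a \unlhd c' \unlhd *(b)$. In that case $a$ and $b$ both lie in the chain below $*(b)$, which makes them comparable, a contradiction. Hence $c' \lhd a$, and symmetrically $c' \lhd b$. So $c'$ is a common lower bound of $a,b$, giving $c' \unlhd c$. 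Therefore $c' = c$.

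For (2), we use convexity (Definition \ref{def:semilattice}(3)). Since $a, b$ are $\unlhd$-incomparable and $a \unlhd *(a)$, $b \unlhd *(b)$, convexity gives that $a < b$ implies $*(a) < *(b)$. For the converse, suppose $*(a) < *(b)$ but not $a < b$. Since $<$ is linear and $a \neq b$, we get $b < a$. Convexity applied with the roles swapped then gives $*(b) < *(a)$, a contradiction. One could also note that $*(a)$ and $*(b)$ are incomparable, being distinct leaves, but this is not needed.

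Both parts are routine. The only point requiring care is the argument that $c' \lhd a$ in (1), which uses linearity of the predecessor chain together with the incomparability of $a$ and $b$. The $<$-minimality built into $*$ plays no role in (1) or (2); it only ensures that $*$ is a well-defined function, presumably for later canonical use.
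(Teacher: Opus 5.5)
Your proof is correct and is exactly the routine verification the paper leaves implicit when it calls these claims ``straightforward.'' Part (1) follows from treeability together with $a \unlhd *_{\mb A_T}(a)$ and $b \unlhd *_{\mb A_T}(b)$, part (2) follows from two applications of the convexity clause in Definition~\ref{def:semilattice}(3), and your remark that the $<$-minimality in the definition of $*_{\mb A_T}$ only serves to make the map well defined is also accurate.
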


Recall that for $\mb A \in \mc C^<$, $\iota_{\mb A}: A \rightarrow L(T(A))$ is an isomorphism from $\mb A$ to $L(T(\mb A))$ that maps $a$ to $[a, a]$.

\begin{proposition}\label{prop:* embedding}
    Let $\mb A, \mb B \in \mc C^<$ and $\mb C_T \in \mc{CT}$.
    \begin{enumerate}
        \item For any $\alpha_T \in \emb(T(\mb A), \mb C_T)$, $\alpha^*: \mb A \rightarrow L(\mb C_T)$ defined by $\alpha^* = *_{\mb C_T} \circ \alpha_T \circ \iota_{\mb A}$ is an embedding from $\mb A$ into $L(\mb C_T)$.
        \item Let $\beta_T \in \emb(T(\mb B), \mb C_T)$ and $\alpha \in \emb(\mb A, \mb B)$.
        Then $\beta^* \circ \alpha = (\beta_T \circ T(\alpha))^* \in \emb(\mb A, L(\mb C_T))$.
    \end{enumerate}
\end{proposition}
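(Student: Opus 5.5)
For (1), we check directly that $\alpha^*$ is injective and preserves and reflects both $C$ and $<$, using Observation \ref{obs:tree*} as the main tool. Let $a\ne b\in A$. The leaves $[a,a]$ and $[b,b]$ of $T(\mb A)$ are $\unlhd$-incomparable, and $\alpha_T$ is an $\mc L_0$-embedding. Since $\alpha_T$ preserves $\lhd$ and $\sqcap$, and $u\unlhd v$ iff $\sqcap(u,v)=u$, the images $\alpha_T([a,a])$ and $\alpha_T([b,b])$ are $\unlhd^{\mb C_T}$-incomparable in $\mb C_T$. They need not be leaves of $\mb C_T$. Applying Observation \ref{obs:tree*}(1) to this incomparable pair gives
$$\sqcap^{\mb C_T}(\alpha^*(a),\alpha^*(b))=\sqcap^{\mb C_T}(\alpha_T[a,a],\alpha_T[b,b])=\alpha_T(\sqcap^{T(\mb A)}([a,a],[b,b]))=\alpha_T([a,b]).$$
In particular $\alpha^*(a)\ne\alpha^*(b)$: otherwise their meet would be the leaf $\alpha^*(a)$, which lies above both $\alpha_T[a,a]$ and $\alpha_T[b,b]$ and is strictly above $\alpha_T([a,b])$. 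So $\alpha^*$ is injective.

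Next, for the order, Observation \ref{obs:tree*}(2) gives $\alpha^*(a)<\alpha^*(b)$ iff $\alpha_T[a,a]<\alpha_T[b,b]$. This holds iff $[a,a]<^{T(\mb A)}[b,b]$, which holds iff $a<^{\mb A}b$, since $\iota_{\mb A}$ is an isomorphism. For the $C$-relation, take pairwise distinct $a,b,c$. Then $L(\mb C_T)\models C(\alpha^*a,\alpha^*b,\alpha^*c)$ iff
$$\sqcap(\alpha^*a,\alpha^*b)=\sqcap(\alpha^*a,\alpha^*c)\lhd\sqcap(\alpha^*b,\alpha^*c).$$
By the displayed identity this reads $\alpha_T[a,b]=\alpha_T[a,c]\lhd\alpha_T[b,c]$. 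Because $\alpha_T$ is an embedding, this is equivalent to $[a,b]=[a,c]\lhd[b,c]$ in $T(\mb A)$, that is, to $L(T(\mb A))\models C(\iota a,\iota b,\iota c)$, and hence to $\mb A\models C(a,b,c)$. The degenerate cases, where two of $a,b,c$ coincide, follow from injectivity and axioms (2) and (4) of Definition \ref{def:C-rel}, since both sides are $C$-relations.

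For (2), both sides are maps $A\to L(C_T)$, so it suffices to compare them pointwise. By the definition of $T(\alpha)$ we have $T(\alpha)([a,a])=[\alpha(a),\alpha(a)]=\iota_{\mb B}(\alpha(a))$, so $T(\alpha)\circ\iota_{\mb A}=\iota_{\mb B}\circ\alpha$. Therefore
$$(\beta_T\circ T(\alpha))^*=*_{\mb C_T}\circ\beta_T\circ T(\alpha)\circ\iota_{\mb A}=*_{\mb C_T}\circ\beta_T\circ\iota_{\mb B}\circ\alpha=\beta^*\circ\alpha.$$
Since $\beta_T\circ T(\alpha)\in\emb(T(\mb A),\mb C_T)$, part (1) shows that this map lies in $\emb(\mb A,L(\mb C_T))$.

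I expect the main obstacle to be the verification of Observation \ref{obs:tree*}(1) itself, which the proof above relies on. The key point is that $*(a)\unrhd a$ and $*(b)\unrhd b$. Since $\mb C_T$ is treeable and $a,b$ are incomparable, the meet of any element above $a$ with any element above $b$ equals $\sqcap(a,b)$. The order statement in Observation \ref{obs:tree*}(2) then follows from convexity of $<^{\mb C_T}$. I take both parts of that observation as given from the paper.
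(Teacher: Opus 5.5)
Your proof is correct and takes essentially the same route as the paper. You use Observation \ref{obs:tree*} to reduce meets and order among the leaves $\alpha^*(a)$ to those of the incomparable images $\alpha_T([a,a]),\alpha_T([b,b])$, pull back through the embedding $\alpha_T$ and the isomorphism $\iota_{\mb A}$, and check (2) pointwise via $T(\alpha)\circ\iota_{\mb A}=\iota_{\mb B}\circ\alpha$. Your explicit injectivity argument and single biconditional chain for pairwise distinct triples make precise two points the paper leaves implicit: its degenerate-case argument tacitly needs injectivity, and it handles the converse direction only by saying the same argument applies.
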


\begin{proof}
    \begin{enumerate}
        \item By Definition \ref{def:C-rel}(1)(2)(4), if $a, b, c$ are not pairwise distinct, then $\mb A \models C(a, b, c)$ if and only if $b = c \ne a$, thus $\mb A \models C(a, b, c)$ if and only if $L(\mb C_T) \models C(\alpha^*(a), \alpha^*(b), \alpha^*(c))$.

        Now assume that $a, b, c$ are pairwise distinct and that $\mb A \models C(a,b,c)$.
        We temporarily regard $\iota_{\mb A}$ as an inclusion map for notational simplicity.
        Then since $\iota_{\mb A}$ is an isomorphism from $\mb A$ to $L(T(\mb A))$, $T(\mb A) \models \sqcap(a, b) = \sqcap(a, c) \lhd \sqcap(b, c)$.
        As $\alpha_T \in \emb(T(\mb A), \mb C_T)$, $\mb C_T \models \sqcap(\alpha_T(a), \alpha_T(b)) = \sqcap(\alpha_T(a), \alpha_T(c)) \lhd \sqcap(\alpha_T(b), \alpha_T(c))$.
        By Observation \ref{obs:tree*}, $\mb C_T \models \sqcap(\alpha^*(a)$, $\alpha^*(b)) = \sqcap(\alpha_T(a), \alpha_T(b)) = \sqcap(\alpha_T(a), \alpha_T(c)) = \sqcap(\alpha^*(a), \alpha^*(c)) \lhd \sqcap(\alpha_T(b)$, $\alpha_T(c)) = \sqcap(\alpha^*(b), \alpha^*(c))$.
        Thus $L(\mb C_T) \models C(\alpha^*(a), \alpha^*(b), \alpha^*(c))$.
        If $\mb A \models \neg C(a, b, c)$, then $L(\mb C_T) \models \neg C(\alpha^*(a), \alpha^*(b), \alpha^*(c))$ by the same argument.

        To see that $\alpha^*$ preserves $<$, note that $\iota_{\mb A}$ and $\alpha_T$ preserve $<$, and $*_{\mb C_T}$ also preserves $<$ on $\lhd^{\mb C_T}$-incomparable elements by Observation \ref{obs:tree*}.
        \item For any $a \in A$,
        \begin{align*}
            (\beta_T \circ T(\alpha))^*(a)& = (*_{\mb C_T} \circ \beta_T \circ T(\alpha) \circ \iota_{\mb A})(a)\\
            & = (*_{\mb C_T} \circ \beta_T \circ T(\alpha))([a, a])\\
            & = (*_{\mb C_T} \circ \beta_T)([\alpha(a), \alpha(a)])\\
            & = (*_{\mb C_T} \circ \beta_T \circ \iota_{\mb B} \circ \alpha)(a)\\
            & = (\beta^* \circ \alpha)(a).\qedhere
        \end{align*} 
    \end{enumerate}
\end{proof}

\begin{theorem}\label{thm:C-rel}
    Let $\mb A, \mb B \in \mc C^<$, $2 \le r \in \omega$, and $\mb C_T \in \mc{CT}$ satisfy $\mb C_T \rightarrow (T(\mb B))^{T(\mb A)}_r$.
    Such a tree $\mb C_T$ can be obtained by Theorem \ref{thm:final Ramsey witness}.
    Then $L(\mb C_T) \in \mc C^<$ satisfies $L(\mb C_T) \rightarrow (\mb B)^{\mb A}_r$
    (see Definition \ref{def:leaves of trees} for the definition of $L(\mb C_T)$).
    Consequently, the class $\mc C^<$ is a Ramsey class.
\end{theorem}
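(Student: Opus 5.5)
The plan is a pullback of colorings through the map $\alpha_T \mapsto \alpha^*$ of Proposition \ref{prop:* embedding}. First I record that $L(\mb C_T) \in \mc C^<$; this is the direct verification noted after Definition \ref{def:leaves of trees}.

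Fix an $r$-coloring $\chi: \emb(\mb A, L(\mb C_T)) \to r$. Define $\chi_T: \emb(T(\mb A), \mb C_T) \to r$ by
\[
\chi_T(\alpha_T) = \chi(\alpha^*), \qquad \alpha^* = *_{\mb C_T} \circ \alpha_T \circ \iota_{\mb A}.
\]
This is well defined because Proposition \ref{prop:* embedding}(1) gives $\alpha^* \in \emb(\mb A, L(\mb C_T))$. Since $\mb C_T \rightarrow (T(\mb B))^{T(\mb A)}_r$, there is $\beta_T \in \emb(T(\mb B), \mb C_T)$ with $|\chi_T(\beta_T \circ \emb(T(\mb A), T(\mb B)))| \le 1$. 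The candidate copy of $\mb B$ is $\beta^* = *_{\mb C_T} \circ \beta_T \circ \iota_{\mb B}$, which lies in $\emb(\mb B, L(\mb C_T))$, again by Proposition \ref{prop:* embedding}(1) applied with $\mb B$ in place of $\mb A$.

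To see that $\beta^*$ is monochromatic, take any $\alpha \in \emb(\mb A, \mb B)$. The canonical tree construction gives $T(\alpha) \in \emb(T(\mb A), T(\mb B))$, so $\beta_T \circ T(\alpha)$ belongs to the monochromatic family. By Proposition \ref{prop:* embedding}(2),
\[
\chi(\beta^* \circ \alpha) = \chi\bigl((\beta_T \circ T(\alpha))^*\bigr) = \chi_T(\beta_T \circ T(\alpha)).
\]
Hence $\chi(\beta^* \circ \emb(\mb A, \mb B)) \subseteq \chi_T(\beta_T \circ \emb(T(\mb A), T(\mb B)))$, which has size at most $1$. Therefore $L(\mb C_T) \rightarrow (\mb B)^{\mb A}_r$.

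For the final claim that $\mc C^<$ is Ramsey, I take $\mb C_T$ from Theorem \ref{thm:final Ramsey witness} applied to $T(\mb A), T(\mb B) \in \mc{CT}$. Every step above is then formal given Proposition \ref{prop:* embedding}. The only real obstacle is the existence and embedding property of $T(\alpha)$ and of $*$, and both are already supplied earlier in the paper (via \cite[Proposition 2.4]{DM24} and Observation \ref{obs:tree*}). So I expect no genuine difficulty. The one point to check is that $L(\mb C_T)$ is nonempty and finite, which holds since $\mb C_T$ is finite and contains a copy of $T(\mb B)$.
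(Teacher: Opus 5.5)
Your proof is correct and is essentially the paper's own argument: you pull the coloring back along $\alpha_T \mapsto \alpha^*$, take $\beta^*$ as the monochromatic copy, and use Proposition~\ref{prop:* embedding}(1)--(2) to identify $\chi(\beta^*\circ\alpha)$ with $\chi_T(\beta_T\circ T(\alpha))$. Your extra remarks, that $L(\mb C_T)\in\mc C^<$ and that the witness comes from Theorem~\ref{thm:final Ramsey witness} applied to $T(\mb A), T(\mb B)$, are exactly what the paper takes for granted.
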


\begin{proof}
    Let $\chi: \emb(\mb A, L(\mb C_T)) \rightarrow r$.
    Define $\chi_T: \emb(T(\mb A), \mb C_T) \rightarrow r$ by $\chi_T(\alpha_T) = \chi(\alpha^*)$, where $\alpha^*$ is defined in Proposition \ref{prop:* embedding}(1).
    By the choice of $\mb C_T$, there is $\beta_T \in \emb(T(\mb B), \mb C_T)$ such that $|\chi_T(\beta_T \circ \emb(T(\mb A), T(\mb B)))| \le 1$.
    Then $\beta^* \in \emb(\mb B, L(\mb C_T))$ by Proposition \ref{prop:* embedding}(1), and for any $\alpha \in \emb(\mb A, \mb B)$, $\beta^* \circ \alpha = (\beta_T \circ T(\alpha))^*$ by Proposition \ref{prop:* embedding}(2).
    Thus $\chi(\beta^* \circ \alpha) = \chi((\beta_T \circ T(\alpha))^*) = \chi_T(\beta_T \circ T(\alpha)) \in \chi_T(\beta_T \circ \emb(T(\mb A), T(\mb B)))$.
\end{proof}

\begin{notation}\label{not:cone subtree}
    Let $\mb A_T \in \mc{CT}$ and $a, b, c \in A_T$.
    \begin{enumerate}
        \item $\cone_{\mb A_T}(a) = \{x \in A_T: a \unlhd^{\mb A_T} x\}$, and $\cone_{\mb A_T}(\mb a)$ is the substructure of $\mb A_T$ with the underlying set $\cone_{\mb A_T}(a)$.
        \item We write $\sqcap^{\mb A_T}(\cone_{\mb A_T}(a), \cone_{\mb A_T}(b)) = c$ if $\sqcap^{\mb A_T}(x, y) = c$ for any $(x, y) \in \cone_{\mb A_T}(a) \times \cone_{\mb A_T}(b)$.
    \end{enumerate}
\end{notation}

$\mb A_T \in \mc{CT}$ is called \emph{binary branching} if every vertex has at most two $\lhd^{\mb A_T}$-immediate successors.
For $\mb A_T \in \mathcal{CT}$, $\mb A_T$ is binary branching if and only if $L(\mb A_T)$ is a binary branching $C$-relation.

\begin{lemma}\label{lemma:binary transform}
    For any $\mb A_T \in \mc{CT}$, there is a binary branching tree $\bin(\mb A_T) \in \mc{CT}$ such that
    \begin{enumerate}
        \item $A_T \subseteq \bin(A_T)$ (but $\mb A_T$ is {\em not} necessarily a substructure of $\bin(\mb A_T)$),
        \item $L(A_T) = L(\bin(A_T))$,
        \item $\unlhd^{\bin(\mb A_T)} \cap A_T^2 = \unlhd^{\mb A_T}$,
        \item for any $a, b, c \in A_T$, if $\mb A_T \models \sqcap(a, b) = \sqcap(a, c) \lhd \sqcap(b, c)$, then $\mb \bin(\mb A_T) \models \sqcap(a, b) = \sqcap(a, c) \lhd \sqcap(b, c)$, and
        \item for any $a, b \in A_T$, $\mb A_T \models a < b$ if and only if $\bin(\mb A_T) \models a < b$.
    \end{enumerate}
\end{lemma}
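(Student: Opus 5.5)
We will build $\bin(\mb A_T)$ by a local surgery at each vertex with too many immediate successors, inserting new vertices but no new leaves. Fix $c \in A_T$ whose immediate $\lhd^{\mb A_T}$-successors, listed in $<^{\mb A_T}$-increasing order, are $x_1 < \cdots < x_k$ with $k \ge 3$. Add fresh vertices $y_2, \ldots, y_{k-1}$ forming a right comb over $c$:
\[ c \lhd y_2 \lhd y_3 \lhd \cdots \lhd y_{k-1}. \]
The children of $c$ become $x_1$ and $y_2$; for $2 \le j \le k-2$ the children of $y_j$ become $x_j$ and $y_{j+1}$; and the children of $y_{k-1}$ are $x_{k-1}$ and $x_k$. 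All cones $\cone_{\mb A_T}(x_j)$ are kept unchanged above these vertices. Doing this simultaneously at every such $c$ gives a finite tree order $\lhd'$ on $A_T \cup \{\text{new vertices}\}$. Its meet $\sqcap'$ is the $\lhd'$-greatest lower bound, which exists because the new order is still a finite rooted tree. For the linear order, place each new vertex $y_j$ immediately after $c$ in an order compatible with $\lhd'$: among the new vertices at $c$ we set $y_2 < \cdots < y_{k-1}$, all of them lying between $c$ and the first old element above $c$ other than $c$. One may instead define $<'$ lexicographically from the local orders on immediate successors, exactly as in the proof of Claim \ref{claim:convex}, taking the local order at $c$ to be $x_1 <' y_2$, at $y_j$ to be $x_j <' y_{j+1}$, and at $y_{k-1}$ to be $x_{k-1} <' x_k$.

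We then verify the items in turn. Item (1) is immediate. For item (2), every new vertex has at least two successors, and no old non-leaf loses all its successors, so the leaf set is unchanged. For item (3), any old vertex $x_j$ still has $c$ below it via a chain of new vertices, and every relation $u \lhd v$ between old elements factors through a chain of immediate successors that is preserved, so $\unlhd'$ restricted to $A_T$ equals $\unlhd^{\mb A_T}$. For item (5), the cleanest route is the lexicographic definition of $<'$. For comparable old elements, $<'$ agrees with $\lhd'$ and hence with $<^{\mb A_T}$, by (3) and because $\lhd^{\mb A_T} \subseteq <^{\mb A_T}$. For incomparable old $a,b$ with old meet $c$, let $a, b$ lie above $x_i$ and $x_j$ with $i<j$. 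Their $\lhd'$-meet is $y_{i}$ (or $c$ if $i=1$), and there $a$ lies on the $x_i$ side while $b$ lies on the $y_{i+1}$ side, or on the $x_{k}$ side if $i = k-1$. Hence $a <' b$, matching $a <^{\mb A_T} b$ by convexity of $<^{\mb A_T}$. The convexity of $<'$ follows as in Claim \ref{claim:convex}.

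The main obstacle is item (4), because meets of old elements may now be new vertices, so $\sqcap'$ differs from $\sqcap^{\mb A_T}$. We will prove the monotonicity statement that for old $u, v$, the meet $\sqcap'(u,v)$ satisfies $\sqcap^{\mb A_T}(u,v) \unlhd' \sqcap'(u,v)$, and that $\sqcap'(u,v)$ lies in the block of new vertices attached to $\sqcap^{\mb A_T}(u,v)$ whenever they differ. Now suppose $\mb A_T \models \sqcap(a,b) = \sqcap(a,c) = m \lhd \sqcap(b,c) = m'$. Then $b$ and $c$ lie above the same child $x_j$ of $m$, while $a$ lies above a different child $x_i$. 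In the comb at $m$, the meet $\sqcap'(a,b)$ is $y_{\min(i,j)}$, or $m$ when $\min(i,j)=1$, with the convention adjusted for the top case $\{k-1,k\}$. This depends only on the pair $\{i,j\}$, so it equals $\sqcap'(a,c)$. Moreover $\sqcap'(b,c)$ lies above $x_j$, hence strictly above that comb vertex. Checking this carefully, including the boundary cases of the comb, is the part we expect to require the most care.
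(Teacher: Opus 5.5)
Your construction is the paper's, mirrored: the paper also inserts $m-2$ new vertices at each $v_k$ with $m\ge 3$ children. It uses a left comb $v_k=w_{k,m}\lhd w_{k,m-1}\lhd\cdots\lhd w_{k,2}$, with $w_{k,j+1}$ the meet of the cones of $v_{k,i},v_{k,j}$ for $i<j$, and processes the vertices one at a time in $<$-order.

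\textbf{The first order you give is wrong and must be dropped.} Placing all new vertices immediately after $c$, with $y_2<'\cdots<'y_{k-1}$, is the forced placement for the paper's left comb, not for your right comb. In your tree $x_1$ and $y_2$ are incomparable children of $c$. So $y_2<'x_1$ would force, by convexity, $x_2<'x_1$ (since $x_2\unrhd' y_2$), contradicting (5). Keep only the lexicographic order, which puts $y_j$ after the whole cone of $x_{j-1}$ and immediately before $x_j$. With that order, your verifications of convexity and of (5) are correct.

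\textbf{For (4), your route genuinely differs from the paper's.} The paper computes no meets in the combs. From (3) it gets $\sqcap^{\mb A_T}(b,c)\unlhd'\sqcap'(b,c)$ and deduces that $\sqcap'(b,c)\unlhd' a$ is impossible. It then uses comparability of elements below $b$ (and below $c$) to conclude $\sqcap'(a,b)=\sqcap'(a,c)\lhd'\sqcap'(b,c)$. That argument uses only (3), so it holds for any tree on a superset of $A_T$ satisfying (3), whatever the shape of the inserted combs.

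Your explicit computation is also valid: $\sqcap'(a,b)=\sqcap'(x_i,x_j)$ depends only on the two children of $m$, and $\sqcap'(b,c)\unrhd' x_j$. No special convention is needed for $\{k-1,k\}$, since $\sqcap'(x_{k-1},x_k)=y_{k-1}$. But it is tied to the comb shape, and as written it misses two cases:
\begin{enumerate}
\item $a=m$, i.e.\ $a\lhd b$ and $a\lhd c$, where $a$ lies above no child of $m$. There (3) directly gives $\sqcap'(a,b)=\sqcap'(a,c)=a\lhd'\sqcap^{\mb A_T}(b,c)\unlhd'\sqcap'(b,c)$.
\item $m$ has only two children, so no comb is inserted at $m$ and $\sqcap'(a,b)=\sqcap'(a,c)=m$.
\end{enumerate}
With these repairs the proof is complete.
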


\begin{proof}
    Let $A_T = \{v_0 <^{\mb A_T} \cdots <^{\mb A_T} v_{N-1}\}$.
    We will inductively extend $A_T$ to $\bin(A_T)$ and define the structure of $\bin(\mb A_T) \in \mc{CT}$.

    Assume that we have defined $\bin(\mb A_T)_{k-1}$.
    If $k = 0$, then let $\bin(\mb A_T)_{-1} = \mb A_T$.
    If $v_k$ has at most $2$ immediate $\lhd^{\mb A_T}$-successors, then $\bin(\mb A_T)_k = \bin(\mb A_T)_{k-1}$.
    Otherwise, enumerate immediate $\lhd^{\mb A_T}$-successors of $v_k$ by $v_{k, 0} <^{\mb A_T} \cdots <^{\mb A_T} v_{k, m-1}$, where $m \ge 3$.
    Let $\bin(A_T)_k = \bin(A_T)_{k-1} \cup \{w_{k, 2}, \ldots, w_{k, m-1}\}$, where $\bin(A_T)_{k-1}$ $\cap$ $\{w_{k, 2}, \ldots, w_{k, m-1}\} = \emptyset$.
    We put $w_{k, m} = v_k$ for convenience.
    Give the following structure to $\bin(\mb A_T)_k$:
    \begin{enumerate}[label=(\roman*)]
        \item For each $x, y \in \bin(\mb A_T)_{k-1}$, if $\bin(\mb A_T)_{k-1} \models x < y$, then $\bin(\mb A_T)_k \models x < y$.
        \item If $\sqcap^{\bin(\mb A_T)_{k-1}}(x, y) \ne v_k$, then $\sqcap^{\bin(\mb A_T)_k}(x, y) = \sqcap^{\bin(\mb A_T)_{k-1}}(x, y)$.
        \item For each $i < j < m$, $\sqcap^{\bin(\mb A_T)_k}(\cone_{\bin(\mb A_T)_k}(v_{k, i}), \cone_{\bin(\mb A_T)_k}(v_{k, j})) = w_{k, j+1}$.
        Note that by (ii), $\cone_{\bin(\mb A_T)_k}(v_{k, i}) = \cone_{\bin(\mb A_T)_{k-1}}(v_{k, i})$ for each $i < m$.
        \item $\bin(\mb A_T)_k \models v_k = w_{k, m} \lhd w_{k, m-1} \lhd \cdots \lhd w_{k, 2}$.
        \item For each $2 \le i \le m - 1$, $\sqcap^{\bin(\mb A_T)_k}(\cone_{\bin(\mb A_T)_k}(v_{k, i}), \cone_{\bin(\mb A_T)_k}(w_{k, i})) = w_{k, i+1}$.
    \end{enumerate}
    Then we can uniquely determine the rest of the structure of $\bin(\mb A_T)_k$, so that $\bin(\mb A_T)_k \in \mc{CT}$.
    Let $\bin(\mb A_T) = \bin(\mb A_T)_{N-1}$, where $v_{N-1}$ is the $<^{\mb A_T}$-maximal element of $\mb A_T$.
    By the construction, (1), (2), (3) and (5) are clear.
    Every new vertex has exactly two immediate successors, and each original vertex has at most two after its stage;
    hence $\operatorname{Bin}(\mathbf A_T)$ is binary branching.

    It remains to show (4).
    Assume that $\mb A_T \models \sqcap(a, b) = \sqcap(a, c) \lhd \sqcap(b, c)$.
    Suppose that $\bin(\mb A_T) \models \sqcap(b, c) \unlhd a$ for a contradiction.
    Then $\bin(\mb A_T) \models \sqcap^{\mb A_T}(b, c) \unlhd \sqcap(b, c) \unlhd a$ by (3), thus $\mb A_T \models \sqcap(b, c) \unlhd a$ by (3).
    But then $\mb A_T \models \sqcap(a, \sqcap(b, c)) = \sqcap(b, c)$, and it implies that $\mb A_T \not\models \sqcap(a, b) = \sqcap(a, c) \lhd \sqcap(b, c)$, a contradiction.
    Thus 
    \begin{center}
        $\bin(\mb A_T) \not \models \sqcap(b, c) \unlhd a. \quad (\dagger)$
    \end{center}
    Note that $\sqcap^{\bin(\mb A_T)}(b, c)$ and $\sqcap^{\bin(\mb A_T)}(a, b)$ are $\unlhd^{\bin(\mb A_T)}$-comparable.
    If $\bin(\mb A_T) \models \sqcap(b, c) \unlhd \sqcap(a, b)$, then $\bin(\mb A_T) \models \sqcap(b, c) \unlhd \sqcap(a, b) \unlhd a$, which is impossible by $(\dagger)$.
    Thus $\bin(\mb A_T) \models \sqcap(a, b) \lhd \sqcap(b, c)$.
    This implies that $\bin(\mb A_T) \models \sqcap(c, \sqcap(a, b)) \lhd \sqcap(b, c)$, and consequently $\bin(\mb A_T) \models \sqcap(a, b) = \sqcap(a, c) \lhd \sqcap(b, c)$.
\end{proof}

\begin{lemma}\label{lemma:to binary c-rel}
    Let $\mb A, \mb B \in \mc C^<_{\bin}$ and $\mb C_T \in \mc{CT}$.
    \begin{enumerate}
        \item Let $\bin_{\mb C_T}$ be the inclusion map from $C_T$ into $\bin(C_T)$.
        For any $\alpha_T \in \emb(T(\mb A), \mb C_T)$, $\alpha^*_{\bin}: \mb A \rightarrow L(\bin(\mb C_T))$ defined by $\alpha^*_{\bin} = \bin_{\mb C_T} \circ *_{\mb C_T} \circ \alpha_T \circ \iota_{\mb A}$ is an embedding from $\mb A$ into $L(\bin(\mb C_T))$.
        \item Let $\beta_T \in \emb(T(\mb B), \mb C_T)$ and $\alpha \in \emb(\mb A, \mb B)$.
        Then $\beta_{\bin}^* \circ \alpha = (\beta_T \circ T(\alpha))_{\bin}^* \in \emb(\mb A, L(\bin(\mb C_T)))$.
    \end{enumerate}
\end{lemma}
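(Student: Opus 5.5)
The plan is to reduce Lemma \ref{lemma:to binary c-rel} to Proposition \ref{prop:* embedding} together with Lemma \ref{lemma:binary transform}. Part (2) is essentially formal. Part (1) needs a check that the binary refinement does not destroy the $C$-relation and order among the chosen leaves.

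For (1), first note that by Proposition \ref{prop:* embedding}(1), $\alpha^* = *_{\mb C_T} \circ \alpha_T \circ \iota_{\mb A}$ is already an embedding of $\mb A$ into $L(\mb C_T)$. By Lemma \ref{lemma:binary transform}(2), $L(C_T) = L(\bin(C_T))$, so $\alpha^*_{\bin}$ is an injective map into the universe of $L(\bin(\mb C_T))$ with the same values as $\alpha^*$. It remains to show that for $x,y,z \in L(C_T)$ the relations $C$ and $<$ agree in $L(\mb C_T)$ and in $L(\bin(\mb C_T))$ on the image of $\alpha^*$.

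For $<$ this is immediate from Lemma \ref{lemma:binary transform}(5). For $C$, if $x,y,z$ are not pairwise distinct, both structures satisfy $C(x,y,z)$ iff $y = z \ne x$ by the $C$-relation axioms, so nothing changes. Now let $x,y,z$ be pairwise distinct images of $a,b,c \in A$. If $\mb A \models C(a,b,c)$, then $L(\mb C_T) \models C(x,y,z)$, that is, $\mb C_T \models \sqcap(x,y)=\sqcap(x,z)\lhd\sqcap(y,z)$. Lemma \ref{lemma:binary transform}(4) then transfers this to $\bin(\mb C_T)$, giving $L(\bin(\mb C_T)) \models C(x,y,z)$.

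The main obstacle is the converse, because Lemma \ref{lemma:binary transform}(4) is only one-directional: refining a $\ge 3$-branching vertex creates new $C$-triples. Here I use that $\mb A$ is binary branching. If $\mb A \models \neg C(a,b,c)$, axiom (5) of Definition \ref{def:C-rel} gives $C(b,a,c)$ or $C(c,a,b)$. By the forward direction, $L(\bin(\mb C_T))$ then satisfies $C(y,x,z)$ or $C(z,x,y)$.

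In either case the $C$-relation axioms give $\neg C(x,y,z)$ in $L(\bin(\mb C_T))$. If $C(y,x,z)$ holds, axiom (1) gives $C(y,z,x)$, and axiom (2) applied to $C(y,z,x)$ gives $\neg C(z,y,x)$, i.e.\ $\neg C(x,y,z)$ by axiom (1) again. Put more directly, in a tree the three meets of a pairwise distinct triple have exactly one strictly higher pair, so at most one of $C(x,y,z)$, $C(y,x,z)$, $C(z,x,y)$ holds. The case $C(z,x,y)$ is symmetric. Here one uses that $L(\bin(\mb C_T))$ is a $C$-relation, which holds because $\bin(\mb C_T) \in \mc{CT}$. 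This completes (1).

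For (2), I will compute pointwise exactly as in Proposition \ref{prop:* embedding}(2). Write $\beta^*_{\bin} = \bin_{\mb C_T}\circ *_{\mb C_T}\circ\beta_T\circ\iota_{\mb B}$. Using $T(\alpha)([a,a]) = [\alpha(a),\alpha(a)]$, i.e.\ $T(\alpha)\circ\iota_{\mb A} = \iota_{\mb B}\circ\alpha$, we get $(\beta_T\circ T(\alpha))^*_{\bin} = \bin_{\mb C_T}\circ(\beta_T\circ T(\alpha))^* = \bin_{\mb C_T}\circ\beta^*\circ\alpha = \beta^*_{\bin}\circ\alpha$. That this composite lies in $\emb(\mb A, L(\bin(\mb C_T)))$ follows from (1) applied to $\beta_T\circ T(\alpha) \in \emb(T(\mb A),\mb C_T)$.
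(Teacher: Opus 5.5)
Your proposal is correct and follows the paper's proof essentially step for step: the forward direction via Lemma \ref{lemma:binary transform}(4), the converse by contradiction using binary branching of $\mb A$ and the asymmetry axiom, the order via Lemma \ref{lemma:binary transform}(5), and part (2) by the same pointwise computation. One small slip is in your axiom chain for the case $C(y,x,z)$: it ends with $\neg C(z,y,x)$, which axiom (1) turns into $\neg C(z,x,y)$ rather than $\neg C(x,y,z)$. The fix is to apply axiom (2) directly to $C(y,x,z)$; in the case $C(z,x,y)$, axiom (2) gives $\neg C(x,z,y)$ and then axiom (1) gives $\neg C(x,y,z)$. Your tree-based remark already establishes this correctly.
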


\begin{proof}
    \begin{enumerate}
        \item
        Assume that $\mb A \models C(a, b, c)$.
        By the proof of Proposition \ref{prop:* embedding}(1), we already know that $\mb C_T \models \sqcap(\alpha^*(a), \alpha^*(b)) = \sqcap(\alpha^*(a), \alpha^*(c)) \lhd \sqcap(\alpha^*(b), \alpha^*(c))$.
        By Lemma \ref{lemma:binary transform}(4), we have $\bin(\mb C_T) \models \sqcap(\alpha^*(a), \alpha^*(b)) = \sqcap(\alpha^*(a), \alpha^*(c)) \lhd \sqcap(\alpha^*(b), \alpha^*(c))$.
        Thus $L(\bin(\mb C_T)) \models C(\alpha^*(a), \alpha^*(b), \alpha^*(c))$.
        
        Now assume that $\mb A \models \neg C(a, b, c)$.
        If $a = b \ne c$, $a = c \ne b$ or $a = b = c$, then $L(\bin(\mb C_T)) \models \neg C(\alpha^*(a), \alpha^*(b), \alpha^*(c))$ by Definition \ref{def:C-rel}(1) and (2).
        If $b = c \ne a$, then $\mb A \models C(a, b, c)$ by Definition \ref{def:C-rel}(4), a contradiction.
        Now suppose that $a, b, c$ are pairwise distinct and $L(\bin(\mb C_T)) \models C(\alpha^*(a), \alpha^*(b), \alpha^*(c))$ for a contradiction.
        Since $\mb A$ is a binary branching $C$-relation, exactly one of $\mb A \models C(b, a, c)$ or $\mb A \models C(c, a, b)$ holds, say, $\mb A \models C(b, a, c)$.
        We have already shown that it implies that $\bin(\mb C_T) \models \sqcap(\alpha^*(b), \alpha^*(a)) = \sqcap(\alpha^*(b), \alpha^*(c)) \lhd \sqcap(\alpha^*(a), \alpha^*(c))$.
        Then we have
        $$L(\bin(\mb C_T)) \models C(\alpha^*(a), \alpha^*(b), \alpha^*(c)) \wedge C(\alpha^*(b), \alpha^*(a), \alpha^*(c)),$$
        which is impossible by Definition \ref{def:C-rel}(2).
        
        Finally, since $\alpha^*$ preserves $<$ and $\bin_{\mb C_T}$ preserves $<$ by Lemma \ref{lemma:binary transform}, $\alpha^*_{\bin}$ preserves $<$.
        Hence $\alpha_{\bin}^* \in \emb(\mb A, L(\bin(\mb C_T)))$.
        \item For any $a \in A$,
        \begin{align*}
            (\beta_T \circ T(\alpha))^*_{\bin}(a)& = (\bin_{\mb C_T} \circ *_{\mb C_T} \circ \beta_T \circ T(\alpha) \circ \iota_{\mb A})(a)\\
            & = (\bin_{\mb C_T} \circ *_{\mb C_T} \circ \beta_T \circ T(\alpha))([a, a])\\
            & = (\bin_{\mb C_T} \circ *_{\mb C_T} \circ \beta_T)([\alpha(a), \alpha(a)])\\
            & = (\bin_{\mb C_T} \circ *_{\mb C_T} \circ \beta_T \circ \iota_{\mb B} \circ \alpha)(a)\\
            & = (\beta^*_{\bin} \circ \alpha)(a).\qedhere
        \end{align*} 
    \end{enumerate}
\end{proof}

\begin{theorem}\label{thm:bin C-rel}
    Let $\mb A, \mb B \in \mc C^<_{\bin}$, $2 \le r \in \omega$, and $\mb C_T \in \mc{CT}$ satisfy $\mb C_T \rightarrow (T(\mb B))^{T(\mb A)}_r$.
    Such a tree $\mb C_T$ can be obtained by Theorem \ref{thm:final Ramsey witness}.
    Then $L(\bin(\mb C_T)) \in \mc C^<_{\bin}$ satisfies $L(\bin(\mb C_T)) \rightarrow (\mb B)^{\mb A}_r$
    (see Definition \ref{def:leaves of trees} and Lemma \ref{lemma:binary transform} for the definition of $L(\bin(\mb C_T))$).
    Consequently, the class $\mc C^<_{\bin}$ is a Ramsey class.
\end{theorem}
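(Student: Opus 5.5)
The proof will follow the proof of Theorem \ref{thm:C-rel} almost word for word, with Lemma \ref{lemma:to binary c-rel} taking the place of Proposition \ref{prop:* embedding}. There are two preliminary facts to check.

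First, $L(\bin(\mb C_T)) \in \mc C^<_{\bin}$. By Lemma \ref{lemma:binary transform}, $\bin(\mb C_T) \in \mc{CT}$ is binary branching. The remark after Notation \ref{not:cone subtree} says that a tree in $\mc{CT}$ is binary branching if and only if its leaf structure is a binary branching $C$-relation. Together with the direct verification noted after Definition \ref{def:leaves of trees}, that $L$ of any member of $\mc{CT}$ is a convexly ordered $C$-relation, this gives $L(\bin(\mb C_T)) \in \mc C^<_{\bin}$. Second, $\mb A, \mb B \in \mc C^<_{\bin} \subseteq \mc C^<$, so $T(\mb A), T(\mb B)$ are defined and the hypothesis on $\mb C_T$ makes sense.

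Now fix a coloring $\chi: \emb(\mb A, L(\bin(\mb C_T))) \to r$ and pull it back to $\chi_T: \emb(T(\mb A), \mb C_T) \to r$ by setting $\chi_T(\alpha_T) = \chi(\alpha^*_{\bin})$. This is well defined because Lemma \ref{lemma:to binary c-rel}(1) guarantees that $\alpha^*_{\bin} \in \emb(\mb A, L(\bin(\mb C_T)))$. Since $\mb C_T \to (T(\mb B))^{T(\mb A)}_r$, there is $\beta_T \in \emb(T(\mb B), \mb C_T)$ on which $\chi_T$ takes at most one value on $\beta_T \circ \emb(T(\mb A), T(\mb B))$.

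By Lemma \ref{lemma:to binary c-rel}(1) applied to $\mb B$, we have $\beta^*_{\bin} \in \emb(\mb B, L(\bin(\mb C_T)))$. For $\alpha \in \emb(\mb A, \mb B)$, the map $T(\alpha) \in \emb(T(\mb A), T(\mb B))$ exists, and Lemma \ref{lemma:to binary c-rel}(2) gives $\beta^*_{\bin} \circ \alpha = (\beta_T \circ T(\alpha))^*_{\bin}$. Hence
\[
\chi(\beta^*_{\bin} \circ \alpha) = \chi_T(\beta_T \circ T(\alpha)) \in \chi_T(\beta_T \circ \emb(T(\mb A), T(\mb B))),
\]
so $\chi(\beta^*_{\bin} \circ \emb(\mb A, \mb B))$ has size at most $1$. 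Finally, since Theorem \ref{thm:final Ramsey witness} supplies such a $\mb C_T$ for every $\mb A, \mb B$ and $r$, the class $\mc C^<_{\bin}$ is a Ramsey class.

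The substantive difficulty has already been isolated in Lemma \ref{lemma:to binary c-rel}(1): negative $C$-instances survive the binarization only because $\mb A$ is binary branching. What remains for this theorem is the membership check $L(\bin(\mb C_T)) \in \mc C^<_{\bin}$. Its only nontrivial input, the equivalence between binary branching of $\mb C$ and of $L(\mb C)$, is stated without proof in the paper. If needed, one checks the direction used here directly: a triple of leaves violating Definition \ref{def:C-rel}(5) would have three pairwise equal meets, forcing at least three immediate successors at that meet.
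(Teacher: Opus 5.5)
Your proposal is correct and follows the paper's proof essentially verbatim. You pull the coloring back along $\alpha_T \mapsto \alpha^*_{\bin}$, apply $\mb C_T \rightarrow (T(\mb B))^{T(\mb A)}_r$, and use Lemma~\ref{lemma:to binary c-rel}(1)--(2) both to obtain $\beta^*_{\bin}$ and to identify $\beta^*_{\bin}\circ\alpha$ with $(\beta_T\circ T(\alpha))^*_{\bin}$; your explicit check that $L(\bin(\mb C_T))\in\mc C^<_{\bin}$ (three leaves with pairwise equal meets force three immediate successors at that meet) correctly fills in a fact the paper only states.
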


\begin{proof}
    Let $\chi: \emb(\mb A, L(\bin(\mb C_T))) \rightarrow r$.
    Define $\chi_T: \emb(T(\mb A), \mb C_T) \rightarrow r$ by $\chi_T(\alpha_T) = \chi(\alpha^*_{\bin})$, where $\alpha^*_{\bin}$ is defined in Lemma \ref{lemma:to binary c-rel}.
    By the choice of $\mb C_T$, there is $\beta_T \in \emb(T(\mb B), \mb C_T)$ such that $|\chi_T(\beta_T \circ \emb(T(\mb A), T(\mb B)))| \le 1$.
    Then $\beta^*_{\bin} \in \emb(\mb B, L(\bin(\mb C_T)))$ by Lemma \ref{lemma:to binary c-rel}(1) and for any $\alpha \in \emb(\mb A, \mb B)$, $\beta^*_{\bin} \circ \alpha = (\beta_T \circ T(\alpha))^*_{\bin}$ by Lemma \ref{lemma:to binary c-rel}(2).
    Thus $\chi(\beta^*_{\bin} \circ \alpha) = \chi((\beta_T \circ T(\alpha))^*_{\bin}) = \chi_T(\beta_T \circ T(\alpha)) \in \chi_T(\beta_T \circ \emb(T(\mb A), T(\mb B)))$.
\end{proof}

\section{Discussion}\label{sec:discussion}

\subsection{Failure of local finiteness}\label{sec:non locally finite}

In contrast with Theorem \ref{thm:F LFS}, the classes considered below are not locally finite subclasses of the corresponding classes of all finite linearly ordered set-valued structures.

\begin{remark}\label{remark:Not LFS}
    \begin{enumerate}
        \item $\mc{CT}_s$ in the language $\mc L_s = \{\sqcap, \lhd, <, (P_n)_{n \in \omega}\}$ is not a locally finite subclass of $\mathcal R_s$, the class of all finite $<$-linearly ordered set-valued $\mc L_s$-structures.
        \item The class $\mc{CT}_{\str}$ of finite convexly ordered treeable meet semilattices with level comparison in the language $\mc L_{\str}= \{\sqcap, \lhd, <, <_{\len}\}$ is not a locally finite subclass of the class of all finite $<$-linearly ordered set-valued $\mathcal L_{\str}$-structures.
        \item The class $\mc{CT}$ of finite convexly ordered treeable meet semilattices in the language $\mc L_0 = \{\sqcap, \lhd, <\}$ is not a locally finite subclass of the class of all finite $<$-linearly ordered set-valued $\mathcal L_0$-structures.
    \end{enumerate}
\end{remark}

\begin{proof}
    \begin{enumerate}
        \item We construct fixed structures $\mathbf B\in\mc{CT}_s$ and $\mathbf C_0\in\mathcal R_s$ such that, for every $n\in\omega$, there is a finite partial $\mathcal L_s$-structure satisfying conditions (1)--(3) of Definition \ref{def:LFS}, but having no completion in $\mc{CT}_s$.

        Let $B=\{a,b,c,d\}$, and let $\mathbf B\in\mc{CT}_s$ satisfy
        \begin{itemize}
            \item $P_0(a) \wedge P_1(b) \wedge P_1(c) \wedge P_2(d)$,
            \item $(a \lhd b) \wedge (a \lhd c \lhd d)$,
            \item $\sqcap(b,c) = a$, and
            \item $a < b < c < d$.
        \end{itemize}
        Given $n \in \omega$, choose any even integer $m > n$.
        Put
        $$C_m = \{a\} \cup \{t_i, u_i, v_i, w_i:i < m \},$$
        with $t_m = t_0$.
        Define a partial $\mathcal L_s$-structure $\mathbf C_m$ by placing copies $\mathbf B_i^0$ and $\mathbf B_i^1$ of $\mathbf B$ on the tuples
        $$(a, t_i, u_i, v_i) \text{ and } (a, w_i, t_{i + 1}, v_i),$$
        respectively, for each $i < m$.
        That is, the maps sending $(a, b, c, d)$ to these tuples are embeddings.
        The interpretations of the relation symbols and of $\sqcap$ in $\mb C_m$ are exactly the unions of their interpretations on these copies of $\mb B$.
        In particular,
        \begin{itemize}
            \item $t_i,t_{i+1}$ are $<^{\mathbf C_m}$-incomparable, as are $u_i,t_{i+1}$, and
            \item $\sqcap^{\mathbf C_m}(t_i,t_{i+1}) = \sqcap^{\mathbf C_m}(u_i,t_{i+1})=\emptyset$.
        \end{itemize}
        Every irreducible substructure of $\mathbf C_m$ is contained in one of the copies of $\mb B$, so Definition \ref{def:LFS}(2) holds.

        Suppose, towards a contradiction, that there are $\mathbf K\in\mc{CT}_s$ and a homomorphism-embedding $f:\mathbf C_m\to\mathbf K$.
        For every $i<m$,
        \begin{itemize}
            \item $f(u_i)$ and $f(t_{i+1})$ are $\lhd^{\mb K}$-predecessors of $f(v_i)$, and
            \item $f(u_i)$ and $f(t_{i+1})$ belong to $P_1^{\mathbf K}$.
        \end{itemize}
        Thus we obtain $f(u_i) = f(t_{i + 1})$.
        As $t_i<^{\mathbf C_m}u_i$, it follows that
        $$f(t_i) <^{\mathbf K} f(u_i) = f(t_{i + 1}).$$
        This yields a cycle
        $$f(t_0) <^{\mathbf K} \cdots <^{\mathbf K} f(t_{m-1}) <^{\mathbf K} f(t_0),$$
        a contradiction.
        Hence there is no completion of $\mb C_m$ in $\mc{CT}_s$.

        We next verify Definition \ref{def:LFS}(3).
        Let $\mb C' \subseteq \mb C_m$ have at most $n$ elements.
        Since $n < m$, some $t_j$ does not belong to $C'$.
        After cyclically relabeling the indices, we may assume that $t_0 \notin C'$.
        Let
        $$K = \{a\} \cup \{w_i, u_i, v_i: i < m\}.$$
        Let $P_0^{\mb K} = \{a\}$, and for each $i < m$, let
        $$\mb K \models (a \lhd w_i) \wedge (a \lhd u_i) \wedge (u_i \lhd v_i) \wedge  P_1(w_i) \wedge P_1(u_i) \wedge P_2(v_i).$$
        Order the immediate $\lhd^{\mb K}$-successors of $a$ by
        $$\mb K \models w_0 < \cdots < w_{m - 1} < u_0< \cdots < u_{m-1},$$
        and equip $\mb K$ with the induced convex linear order and its natural meet operation.
        This defines $\mb K \in \mc{CT}_s$.
        Define $g: C' \rightarrow K$ by
        $$g(t_i) = u_{i - 1} \text{ whenever }t_i \in C', \text{ and } g(x) = x \text{ for all other }x \in C'.$$
        $g$ is a homomorphism, and its restriction to each $\mb C' \cap \mb B_i^j$, for $j \in \{0, 1\}$, is an embedding.
        Every irreducible substructure of $\mb C'$ is contained in one of these intersections, so $g$ is a homomorphism-embedding.
        Thus $\mathbf K$ is a completion of $\mathbf C'$.

        It remains to construct the fixed structure $\mb C_0 \in \mc R_s$ and verify condition (1) of Definition \ref{def:LFS}.
        Let
        $$C_0 = \{a, w, t_0, t_1, u_0, u_1, v_0, v_1\}.$$
        Place copies of $\mb B$ on
        $$(a, t_i, u_i, v_i) \text{ and }(a, w, t_{i + 1}, v_i) \quad (i < 2), \text{ where }t_2 = t_0,$$
        and let
        $$\mb C_0 \models a < w < t_0 < u_0 < t_1 < v_0 < u_1 < v_1.$$
        The interpretations of the remaining relation symbols and of $\sqcap$ are exactly the unions of their interpretations in these four copies.
        Define $h_m: C_m \rightarrow C_0$ by
        \begin{itemize}
            \item $h_m(a) = a$,
            \item $h_m(w_i) = w \text{ for each } i < m$, and
            \item $h_m(t_i) = t_{i \bmod 2}, \quad h_m(u_i) = u_{i \bmod 2}, \quad h_m(v_i) = v_{i \bmod 2}.$
        \end{itemize}
        Since $m$ is even, $h_m$ maps each copy of $\mb B$ in $\mathbf C_m$ isomorphically onto a copy of $\mb B$ in $\mb C_0$.
        Hence $h_m$ is a homomorphism-embedding, as required.

        \item Let $\mb B^+_s \in \mc{CT}_s^+$ be the unique $\mc L_s^+$-expansion of $\mb B \in \mc{CT}_s$ in (1), and let $\mb B_{\str} = \mb B^+_s |_{\mc L_{\str}} \in \mc{CT}_{\str}$.
        Construct $\mb C_{m, \str}$ and $\mb C_{0, \str}$ from copies of $\mb B_{\str}$ by the same scheme as in (1).
        The same map $h_m$ verifies condition (1) of Definition \ref{def:LFS}, and every irreducible substructure of $\mb C_{m,\str}$ is still contained in a copy of $\mb B_{\str}$, verifying condition (2) of Definition \ref{def:LFS}.
        For condition (3) of Definition \ref{def:LFS}, use the small-substructure completions from (1), add their induced level comparison relation, and then forget the predicates $P_n$.

        Suppose that there are $\mb K \in \mc{CT}_{\str}$ and a homomorphism-embedding $f: \mb C_{m, \str} \to \mb K$.
        For every $i < m$, the copies of $\mb B_{\str}$ give
        \begin{itemize}
            \item $f(u_i)$ and $f(t_{i+1})$ are $\lhd^{\mb K}$-predecessors of $f(v_i)$,
            \item $\sqcap^{\mb K}(f(t_i), f(u_i)) = f(a)$,
            \item $f(t_i)$ and $f(u_i)$ are $\unlhd^{\mb K}$-incomparable, and
            \item $f(t_i) <^{\mb K} f(u_i)$.
        \end{itemize}
        By the convexity of $<^{\mb K}$, we get that $f(t_i) <^{\mathbf K} f(t_{i + 1})$ for each $i < m$.
        Again, these inequalities form a $<^{\mb K}$-cycle, a contradiction.

        \item Take the $\mathcal L_0$-reducts of $\mathbf B$, $\mathbf C_m$, and $\mathbf C_0$ from (1).
        The same arguments verify conditions (1) and (2) of Definition \ref{def:LFS}, while condition (3) of Definition \ref{def:LFS} follows by forgetting the level predicates in the small-substructure completions from (1).
        The argument in (2) uses only $\sqcap$, $\lhd$, and $<$ to show that there is no completion, so it proves that there is no completion in $\mc{CT}$. \qedhere
    \end{enumerate}
\end{proof}

\subsection{Further questions}\label{sec:Milliken}

Our results suggest asking whether the partite construction can also recover other Ramsey theorems for trees.
In this final section, we focus on the classes $\mc{DT}_m$ introduced in \cite[Section 6]{Sok15}.
We first recall their definitions.

\begin{definition}\label{def:other trees}\label{def:DT}
    Let $1 \le m \in \omega$.
    \begin{enumerate}
        \item $\mc L_m = \{\sqcap, \lhd\} \cup \{R_{m, i}: 1\leq i \leq m\}$, where each $R_{m,i}$ is a binary relation symbol.
        \item The class $\mc{DT}_m$ consists of all finite $\mc L_m$-structures $\mb A$ such that $(A,\sqcap^{\mb A},\lhd^{\mb A})$ is a treeable
        meet semilattice, every vertex has at most $m$ immediate
        $\lhd^{\mb A}$-successors, and the following conditions hold for
        $a,b,c\in A$ and $1\leq i\leq m$:
        \begin{enumerate}
            \item If $(a,b) \in R_{m, i}^{\mb A}$, then $a \lhd^{\mb A} b$.
            \item If $(a,b) \in R_{m, i}^{\mb A}$ and $b \unlhd^{\mb A} c$, then
            $(a, c) \in R_{m, i}^{\mb A}$.
            \item If $(a, b) \in R_{m, i}^{\mb A}$ and
            $\sqcap^{\mb A}(b, c) = a$, then $(a,c) \notin R_{m,i}^{\mb A}$.
            \item If $a \lhd^{\mb A} b$, then there is exactly one $i\in\{1,\ldots,m\}$ such that $(a,b) \in R_{m,i}^{\mb A}$.
        \end{enumerate}
    \end{enumerate}

\end{definition}

Equivalently, the immediate $\lhd$-successors of each vertex are assigned distinct branch labels from $\{1,\ldots,m\}$.
For $a \lhd b$, $R_{m,i}(a,b)$ holds if and only if the immediate $\lhd$-successor of $a$ on the $\lhd$-chain from $a$ to $b$ has label $i$.
Not every label needs to occur at every vertex, and the language contains no linear order.
We included the tree order $\lhd$ explicitly, although it is quantifier-free
definable from the meet operation $\sqcap$.

\begin{fact}[{\cite[Section 6]{Sok15}}]\label{fact:DT-Ramsey}
    For every integer $m \ge 1$, the class $\mc{DT}_m$ is a \fr class and has the Ramsey property.
\end{fact}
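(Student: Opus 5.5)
The plan is to treat $\mc{DT}_m$ separately from the partite machinery: show it is a \fr class by exhibiting an ultrahomogeneous limit, and get the Ramsey property by transfer from $\mc{CT}$ (Theorem \ref{thm:final Ramsey witness}). This mirrors Sections \ref{sec:aut transfer} and \ref{sec:C-rel}.

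\emph{Fraïssé part.} HP is immediate. A substructure of $\mb A \in \mc{DT}_m$ is closed under $\sqcap$, and conditions (a)--(d) are universal. Note also that branching at most $m$ is inherited: distinct immediate successors of $a$ in a substructure lie in distinct immediate-successor cones of $a$ in $\mb A$, because their meet is $a$. For AP, I would amalgamate $\mb B_1,\mb B_2$ over $\mb A$ by gluing along labelled branches. Each $x\in B_j\setminus A$ either lies strictly above some $a\in A$ inside the cone of a label $i$ already used in $\mb A$ at $a$, or starts a new branch. I identify cones that carry the same label at the same vertex. Condition (c) forces that two elements with the same label over $a$ cannot have meet $a$. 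Hence, inside such a cone, the structures are merged by induction on height, exactly as for free amalgamation of labelled trees. Branches with distinct labels are kept apart. The branching bound survives because labels at each vertex are distinct and there are only $m$ of them. Equivalently, I would take the limit to be the tree $\omega^{<\omega}$ in which each node has $m$ labelled successor cones, each isomorphic to the limit again in the dense, ``meet-closed'' sense. Ultrahomogeneity is then checked level by level as in Proposition \ref{prop:s homo}.

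\emph{Ramsey part.} I would expand $\mc{DT}_m$ by a convex order: order the immediate successors of each vertex by their label. This gives a convex linear order $<$ definable from the $R_{m,i}$. For $a,b$ incomparable, $a<b$ iff the label toward $a$ at $\sqcap(a,b)$ is smaller than the label toward $b$. The expanded structures are precisely members of $\mc{CT}$ with extra labels. I then need a Ramsey theorem for $\mc{CT}$ with labelled branches. The natural route is to rerun Section \ref{sec:partite} in the language $\mc L_F\cup\{R_{m,i}\}$. The completion argument of Theorem \ref{thm:F LFS} goes through with $n=1$, since labels are determined by pairs lying in irreducible substructures and are inherited by the lexicographic order of Claim \ref{claim:convex}. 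One then removes levels and $F$ via Corollary \ref{cor:Shelah Ramsey}, Proposition \ref{prop:str Ramsey} and Proposition \ref{prop:convex treeable semilattice Ramsey}. Since the order is definable from the labels, an embedding of $\mc{DT}_m$-structures automatically preserves $<$. Forgetting $<$ therefore leaves the embedding sets unchanged, which gives $\mb C\to(\mb B)^{\mb A}_r$ in $\mc{DT}_m$.

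\emph{Main obstacle.} I expect the hardest step to be the completion step with labels. Gluing the $\lhd$-components and identifying roots, as in the final step of Theorem \ref{thm:F LFS}, can place two successors with the same label at one vertex, or exceed branching $m$. The fix I would try first is to refuse to identify roots across components. Instead I would add a fresh root below everything, with the components placed on distinct labels, and iterate if more than $m$ components occur, building a chain of fresh vertices. This would only be a homomorphism-embedding if the roots carry no labelled relations with each other, which holds because distinct components are unrelated in $\mb C$. If this fails, I would fall back on citing \cite[Section 6]{Sok15} directly, as the statement does.
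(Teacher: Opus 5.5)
\textbf{There is a genuine gap in the Ramsey half of your plan.} First, the paper does not prove this statement. It cites Soki\'c, whose argument uses a Ramsey theorem for strong subtrees (a consequence of Milliken's theorem) together with a product Ramsey theorem. Your route is an auxiliary expansion, then the locally finite subclass method, then Ramsey transfer. That is exactly what the paper leaves open in Question~\ref{question:DT-partite}.

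The obstruction is that levels and $F$ destroy all the combinatorics once branching is bounded and labelled. In $\mathcal L_F\cup\{R_{m,1},\dots,R_{m,m}\}$, $F$ is the one-level predecessor and siblings carry distinct labels. So every vertex is determined by the sequence of labels along its $F$-chain down to the $P_0$-root. Hence each member of your expanded class is an $F$-closed subtree of $\{1,\dots,m\}^{<\omega}$, and $|\emb(\mathbf A,\mathbf B)|\le 1$ for any two members.

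Two consequences follow. The completion you worried about is trivial: send each vertex of $\mathbf C$ to its label sequence, which identifies equally labelled siblings. (Your fresh-root fix could not work, since homomorphisms preserve $P_0$ and $F$.) But for the same reason the class is Ramsey only trivially, with $\mathbf C=\mathbf B$ as witness. The partite construction therefore contributes nothing, and all of the Ramsey content of $\mathcal{DT}_m$ would have to come from the steps that remove $F$ and the levels.

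\textbf{Those steps have no labelled analogue.} Corollary~\ref{cor:Shelah Ramsey}(2) rests on Proposition~\ref{prop:Ramsey witness transfer}. That proposition needs ultrahomogeneous structures on one universe with equal automorphism groups, as $\CQ$ and $\SQ$ are. Here the $F$-limit $\{1,\dots,m\}^{<\omega}$ has trivial automorphism group, and its leveled reduct is not ultrahomogeneous.

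Worse, consider the class of finite leveled labelled trees in $\mathcal L_s\cup\{R_{m,i}\}$ in which immediate predecessors may skip levels. Proposition~\ref{prop:str Ramsey} needs this class, since its expansions $\mathbf A_S$ place consecutive heights on non-adjacent levels. For $m\ge 2$ it fails AP:
\begin{itemize}
\item Let $\mathbf A$ be the chain $r\lhd x$ with $r\in P_0$, $x\in P_2$ and $R_{m,1}(r,x)$.
\item Extend it by $y\in P_1$ with $r\lhd y\lhd x$ and $R_{m,1}(y,x)$.
\item Alternatively extend it by $z\in P_1$ with $r\lhd z\lhd x$ and $R_{m,2}(z,x)$.
\end{itemize}
In any amalgam the images of $y$ and $z$ are comparable predecessors of $x$ on the same level, hence equal. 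This contradicts Definition~\ref{def:DT}(2)(d).

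This $\mathbf A$ is $\mathbf A_S$ for a two-element chain and $S=\{0,2\}$, and the labelled analogue of $\mathbf D_0$ contains both extensions. The argument behind Fact~\ref{fact:Ramsey implies Fraisse} then shows that the witness $\mathbf D_{i+1}\to(\mathbf D_i)^{\mathbf A_{S_i}}_r$ required at that stage of Proposition~\ref{prop:str Ramsey} does not exist.

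In $\mathcal{CT}$ the two inserted vertices are simply identified. Moreover, the dense unbounded branching of $\CQ$ makes it far from rigid, which is what gives the partite construction something to do. The bounded labelled case has neither feature. So your proposal does not establish the Ramsey property; only your fallback, citing Soki\'c, agrees with the paper.

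\textbf{Your Fra\"{\i}ss\'e half is repairable but imprecise.} The case split ignores elements of $B_j\setminus A$ lying below or between elements of $A$; these must be interleaved consistently with the labels. Also, the limit is dense rather than a copy of $\omega^{<\omega}$, so a level-by-level argument as in Proposition~\ref{prop:s homo} does not apply. Neither point matters much, since HP and AP together with Theorem~\ref{thm: Fraisse} suffice.
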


For $m=2$, the two branch labels can instead be encoded by placing the descendants in one direction before their ancestor and the descendants in the other direction after the ancestor.
This gives the following class $\mc{BT}_2$ from \cite{Sok15}.

\begin{definition}\label{def:BT}
    The class $\mc{BT}_2$ consists of all finite
    $\mc L_0$-structures $\mb A=(A$, $\sqcap^{\mb A}$, $\lhd^{\mb A}$, $<^{\mb A})$ such that
    $(A,\sqcap^{\mb A},\lhd^{\mb A})$ is a treeable meet semilattice in which every vertex
    has at most two immediate $\lhd^{\mb A}$-successors, and $<^{\mb A}$ is a linear order
    satisfying the following conditions:
    \begin{enumerate}
        \item If $a$ has two distinct immediate $\lhd^{\mb A}$-successors
        $b_0,b_1$, then, after possibly interchanging $b_0$ and $b_1$,
        $\mb A \models x < a < y$ for any $x \unrhd^{\mb A} b_0$ and $y \unrhd^{\mb A} b_1$.
        \item If $\mb A \models a \lhd b$, then either $\mb A \models x < a$ for every $x \unrhd^{\mb A} b$, or
        $\mb A \models a < x$ for every $x \unrhd^{\mb A} b$.
    \end{enumerate}
    In this definition, $<^{\mb A}$ is not required to extend $\lhd^{\mb A}$.
\end{definition}

\begin{remark}\label{remark:Sokic-conventions}
    The definition of $\mc{DT}_m$ in \cite[Section 6]{Sok15} omitted the uniqueness of a label $R_{m, i}$ for each pair $(a,b)$ with $a \lhd b$.
    We include uniqueness in Definition~\ref{def:DT}(2)(d).
    Without this uniqueness, even amalgamation can fail when $m \ge 2$.
    
    Likewise, the definition of $\mc{BT}_2$ in \cite{Sok15}
    imposes its $<$-order conditions only at vertices with two immediate
    $\lhd$-successors.
    We also impose Definition~\ref{def:BT}(2),
    including at vertices with a single immediate $\lhd$-successor.
\end{remark}

The bidefinability of $\mc{BT}_2$ and $\mc{DT}_2$ noted in \cite{Sok15} can be expressed by quantifier-free formulas.
Starting with $\mb A\in\mc{BT}_2$, define
\begin{align*}
    R_{2,1}(x,y) & \quad \Longleftrightarrow \quad x \lhd y \wedge y < x,\\
    R_{2,2}(x,y) & \quad \Longleftrightarrow \quad x \lhd y \wedge x < y.
\end{align*}
Conversely, for a structure in $\mc{DT}_2$, define
\begin{align*}
    x < y \quad \Longleftrightarrow \quad R_{2, 2}(x,y) \vee R_{2, 1}(y, x) \vee (R_{2, 1}(\sqcap(x, y), x) \wedge R_{2, 2}(\sqcap(x, y), y)).
\end{align*}
The two constructions are inverse to one another and, being quantifier-free, preserve embeddings in both directions.
Consequently, Ramsey witnesses transfer between these two classes.

For any $1 \le m \in \omega$, there is a definable canonical convex linear order $<_{\operatorname{lex}}$ on each member of $\mc{DT}_m$, defined by
\begin{align*}
    x <_{\mathrm{lex}} y \quad \Longleftrightarrow \quad x \lhd y \vee \bigvee_{1 \leq i < j \leq m} (R_{m, i}(\sqcap(x, y),x) \wedge R_{m, j}(\sqcap(x, y), y)).
\end{align*}
Formally adding this $<_{\operatorname{lex}}$ to $\mc L_m$ as a relational symbol results in a definitional expansion of the \fr limit of $\mc{DT}_m$, and hence does not change the embeddings or the Ramsey property.
For $m=2$, this convex order should not be confused with the order used
in $\mc{BT}_2$ above.

The case $m = 1$ is the classical Ramsey theorem for finite linear orders.
For $m \ge 2$, Soki\'c's proof proceeds by induction on the size of the structure whose copies are colored, using the Ramsey theorem for strong subtrees in the one-vertex base case and a product Ramsey theorem in the inductive step.
One of the consequences of Milliken's tree theorem \cite{Mil79} relevant here says that every finite coloring of the strong subtrees with a fixed finite number of levels in the infinite full $m$-ary tree is constant on all such strong subtrees of some infinite strong subtree.
Here each level of a strong subtree lies in a single level of the ambient tree, and for each pair of consecutive selected levels, every vertex of the subtree on the first has exactly one vertex of the subtree on the second in each direction determined by an immediate $\lhd$-successor in the ambient tree.
The finite form of the one-level case of this corollary yields the one-vertex base case in the proof of \cite[Theorem 6.1]{Sok15};
we can ask whether the Ramsey property of $\mathcal{DT}_m$ can be proved by the partite construction without using a tree-specific Ramsey theorem.

The question below allows auxiliary expansions and subsequent Ramsey transfer, as in Section \ref{sec:partite};
it is not restricted to a direct application of the locally finite subclass theorem in the original language $\mc L_m$.
Indeed, the obstruction in Subsection \ref{sec:non locally finite} and the expansion used in Section \ref{sec:partite} show why this distinction matters even for $\mc{CT}$.
Thus we seek a partite construction approach that shows the Ramsey property of $\mc{DT}_m$, possibly through auxiliary expansions, rather than using a Ramsey theorem for strong subtrees as an external input.

\begin{question}\label{question:DT-partite}
    For each integer $m\geq2$, is there a proof of the Ramsey property of
    $\mc{DT}_m$ based on the partite construction?
\end{question}

\section*{Acknowledgments}

The author is particularly grateful to Chang‐Yeon Chough for providing the opportunity to continue pursuing independent research.

The author would like to thank Joonkyung Lee for giving the author opportunities to learn about a wide range of topics in combinatorics. The author is also grateful to Jisun Baek and Jaehyeon Seo for studying \cite{Bod15} and \cite{HK26} together in a seminar over several months in 2025. Special thanks are due to Jan Hubička and Matěj Konečný for their warm hospitality at the Midsummer Combinatorics Workshop and for generously spending many hours answering questions about \cite{HK26}. Finally, the author thanks Samuel Braunfeld for the invitation to the Midsummer Combinatorics Workshop, which encouraged the author to continue exploring structural Ramsey theory.

\section*{Funding}

This research was supported by Global - Learning \& Academic research institution for Master’s·PhD students, and Postdocs(LAMP) Program of the National Research Foundation of Korea(NRF) grant funded by the Ministry of Education(No. RS-2024-00441954), and partially supported by the Research Institute for Mathematical Sciences, an International Joint Usage/Research Center located in Kyoto University.

\section*{AI Declaration}

The author used ChatGPT (OpenAI; GPT-5.5, GPT-5.6 Sol and GPT-6) to discuss proof strategies, review mathematical arguments, identify relevant literature, and assist with drafting and revising the manuscript.
In particular, ChatGPT suggested the idea of transferring Ramsey witnesses from $\mc{CT}$ to $\mc C^<$ (Section \ref{sec:C-rel}).
Other than this suggestion, the proof strategies in the paper were designed by the author, with ChatGPT assisting in working out the details.
No suggestion was incorporated into the manuscript without meticulous assessment and verification by the author.
The author takes full responsibility for the final manuscript.

\printbibliography

\end{document}